\numberwithin{equation}{section}
\theoremstyle{plain}
\newtheorem{theorem}{Theorem}[section]
\newtheorem{proposition}[theorem]{Proposition}
\newtheorem{lemma}[theorem]{Lemma}
\newtheorem{corollary}[theorem]{Corollary}
  \newtheorem{conjecture}[theorem]{Conjecture}                                                                              
\newtheorem{definition}[theorem]{Definition}
\newtheorem{remark}[theorem]{Remark}
\newtheorem{convention}[theorem]{Convention}
\newenvironment{proof}{{\noindent \textbf{Proof}\,\,}}{\hspace*{\fill}$\Box$\medskip}
\title{On constrictions of phase-lock areas in model of overdamped Josephson effect and transition matrix of 
double confluent Heun equation}
\author{A.A.Glutsyuk\thanks{ CNRS, France (UMR 5669 (UMPA, ENS de Lyon) and Interdisciplinary Scientific Center 
J.-V.Poncelet). 
Email: aglutsyu@ens-lyon.fr}
\thanks{National Research University Higher School of Economics (HSE), Moscow, Russia}
 \thanks{Supported by  RSF grant FF-18-41-05003.}}
\begin{document}
\maketitle
\def\zz{\mathbb Z}
\def\nn{\mathbb N}
\def\var{\varepsilon}
\def\td{\mathbb T^3} 
\def\rr{\mathbb R}
\def\la{\lambda}
\def\go#1{\EuFrak#1}
\def\wt{\widetilde}
\def\sign{\operatorname{sign}}
\def\cc{\mathbb C}
\def\oc{\overline\cc}
\def\diag{\operatorname{diag}}
\def\dd{\Delta_{discr}}
\def\mpp{\mathcal P}
\def\mca{\mathcal A}
\def\La{\Lambda}
\def\mcp{\mathcal P}
\def\mcl{\mathcal L}
\def\im{\operatorname{Im}}
\def\re{\operatorname{Re}}
\def\bbi{\mathbb I}
\def\mce{\mathcal E}

\begin{abstract} In 1973 B.Josephson received Nobel Prize for discovering a new fundamental effect concerning 
a {\it Josephson junction,} --  a system of two superconductors 
separated by a very narrow dielectric: there could exist a supercurrent tunneling through this junction.  We will discuss the model of the overdamped Josephson junction, which is given by a 
family of  first order non-linear ordinary differential equations on two-torus depending on three parameters: 
a fixed parameter $\omega$ (the {\it frequency}); a pair of variable parameters $(B,A)$ 
that are called respectively  the {\it abscissa,} and  the {\it ordinate.} 
It is important to study the rotation number of the system  as a function $\rho=\rho(B,A)$ and to describe the {\it phase-lock areas:} its level sets $L_r=\{\rho =r\}$  with non-empty interiors. 
They were studied by V.M.Buchstaber, O.V.Karpov, S.I.Tertychnyi, who observed  
in 2010  that the phase-lock areas exist only for integer values of the rotation number. It is known that each phase-lock area is a garland  of infinitely many bounded domains going to infinity in the vertical direction; each two subsequent domains are separated by one point, which is called {\it constriction} (provided that it does not lie in the abscissa axis). 
Those  points of intersection of the boundary $\partial L_r$ of the  phase-lock area $L_r$ 
with the line 
$\Lambda_r=\{ B=r\omega\}$ (which is called its {\it axis}) that are not constrictions are 
called {\it simple intersections.}  It is known that our family of dynamical systems is 
related to appropriate family of double confluent Heun equations with the same 
parameters via Buchtaber--Tertychnyi construction.  Simple intersections correspond to some of those parameter values for which the corresponding "conjugate" double confluent Heun equation has a polynomial solution 
(follows from results of a joint paper of  V.M.Buchstaber and S.I.Tertychnyi and a 
joint paper of V.M.Buchstaber and the author).  There is a conjecture stating that {\it all the constrictions of every phase-lock area $L_r$ lie in its axis $\Lambda_r$.} 
This conjecture was studied and partially proved in a joint paper of the author with V.A.Kleptsyn, D.A.Filimonov and 
I.V.Schurov. Another conjecture states that for  any two subsequent constrictions in $L_r$ with positive ordinates  
the interval between them also lies in $L_r$. In this paper we present  new results partially confirming both 
conjectures. The main result states that the phase-lock area $L_r$ contains the infinite interval of the axis $\La_r$ issued upwards from 
 the point of intersection $\partial L_r\cap\La_r$ with the biggest possible ordinate that is not 
a constriction. The proof is done by studying the complexification of the system under question, which is the projectivization 
of a family of systems of second  order linear equations with two  irregular non-resonant singular points at zero and at infinity. 
We obtain new results on the transition matrix between appropriate canonical solution bases of the linear system; on 
its behavior as a function of parameters. 
A key result, which implies the main result of the paper, states that 
the off-diagonal terms of the transition matrix are both non-zero 
at each constriction. We reduce the above conjectures on constrictions to the 
conjecture on negativity of the ratio of the latter off-diagonal terms at each constriction. 
\end{abstract}

\tableofcontents
\section{Introduction}

\subsection{Phase-lock areas in Josephson effect: history,  main conjectures and main results}
We study the family 
 \begin{equation}\frac{d\phi}{dt}=-\sin \phi + B + A \cos\omega t, \ \omega>0, \ B\geq0.\label{jos}\end{equation}
  of  {\it nonlinear} equations, which arises in several models in physics, mechanics and geometry. Our main 
  motivation is that it describes the overdamped model of the 
Josephson junction (RSJ - model)  in superconductivity, see \cite{josephson, stewart, mcc, bar, schmidt}. It 
 arises in  planimeters, see  \cite{Foote, foott}. 
Here $\omega$ is a fixed constant, and $(B,A)$ are the parameters. Set 
$$\tau=\omega t, \ l=\frac B\omega, \ \mu=\frac A{2\omega}.$$
The variable change $t\mapsto \tau$ transforms (\ref{jos}) to a 
non-autonomous ordinary differential equation on the two-torus $\mathbb T^2=S^1\times S^1$ with coordinates 
$(\phi,\tau)\in\rr^2\slash2\pi\zz^2$: 
\begin{equation} \dot \phi=\frac{d\phi}{d\tau}=-\frac{\sin \phi}{\omega} + l + 2\mu \cos \tau.\label{jostor}\end{equation}
The graphs of its solutions are the orbits of the vector field 
\begin{equation}\begin{cases} & \dot\phi=-\frac{\sin \phi}{\omega} + l + 2\mu \cos \tau\\
& \dot \tau=1\end{cases}\label{josvec}\end{equation}
on $\mathbb T^2$. The {\it rotation number} of its flow, see \cite[p. 104]{arn},  is a function $\rho(B,A)$ of parameters\footnote{There is a misprint, 
missing $2\pi$ in the denominator, in analogous formulas in previous papers of the 
author with co-authors: \cite[formula (2.2)]{4}, \cite[the formula after (1.16)]{bg2}.}:
$$\rho(B,A;\omega)=\lim_{k\to+\infty}\frac{\phi(2\pi k)}{2\pi k}.$$
Here $\phi(\tau)$ is a general $\rr$-valued solution of the first equation in (\ref{josvec}) 
whose parameter is the initial condition 
for $\tau=0$. Recall that the rotation number is independent on the choice of the initial condition, see \cite[p.104]{arn}. 
 The parameter $B$ is called {\it abscissa,} and $A$ is called the {\it ordinate.} 
 Recall the following well-known definition. 

\begin{definition} (cf. \cite[definition 1.1]{4}) The {\it $r$-th phase-lock area} is the level set 
$$L_r=\{\rho(B,A)=r\}\subset\rr^2,$$ 
provided that it has a non-empty interior. 
\end{definition}

\begin{remark}{\bf: phase-lock areas and Arnold tongues.} H.Poincar\'e introduced the rotation number of a circle 
diffeomorphism. The rotation number of the flow  of the field (\ref{josvec}) on $\mathbb T^2$ equals 
(modulo $\zz$) the rotation number of the circle diffeomorphism given by its time $2\pi$ flow mapping 
restricted to the cross-section $S^1_{\phi}\times\{0\}$. In Arnold family of circle diffeomorphisms 
$x\mapsto x+b+a\sin x$, $x\in S^1=\rr\slash2\pi\zz$ the behavior of its phase-lock areas for small $a$ demonstrates the tongues 
effect discovered by V.I. Arnold \cite[p. 110]{arn}. That is why the phase-lock areas became 
 ``Arnold tongues'', see  \cite[definition 1.1]{4}. 
\end{remark}

Recall that the rotation number has physical meaning of the mean voltage over a long time interval. 
The phase-lock areas of the family (\ref{jostor}) were studied by V.M.Buchstaber, O.V.Karpov, S.I.Tertychnyi et al, see \cite{bg}--\cite{bt3}, \cite{RK}, \cite{4} and references therein. It is known that the 
following statements hold: 

1) Phase-lock areas exist only for integer values of the rotation number  
(a ``quantization effect'' observed in \cite{buch2} and later also proved 
in  \cite{IRF, LSh2009}).

2) The boundary of the $r$-th phase-lock area 
 consists of two analytic curves, which are the graphs of two
functions $B=g_{r,\pm}(A)$ (see \cite{buch1}; this fact was later explained by A.V.Klimenko via symmetry, see \cite{RK}).

3)  The latter functions have Bessel asymptotics
\begin{equation}\begin{cases} g_{r,-}(s)=r\omega-J_r(-\frac s\omega)+O(\frac{\ln |s|}s) \\ 
g_{r,+}(s)=r\omega+J_r(-\frac s\omega)+O(\frac{\ln |s|}s)\end{cases}, \text{ as } s\to\infty\label{bessas}\end{equation}
 (observed and proved on physical level in ~\cite{shap}, see also \cite[chapter 5]{lich},
 \cite[section 11.1]{bar}, ~\cite{buch2006}; proved mathematically in ~\cite{RK}).
 
 4) Each phase-lock area is a garland  of infinitely many bounded domains going to infinity in the vertical direction. 
 In this chain each two subsequent domains are separated by one point. This follows from the 
 above statement 3). Those of the latter separation 
 points that lie in the horizontal $B$-axis are calculated explicitly,  and 
 we call them the {\it growth points}, see \cite[corollary 3]{buch1}. The other separation points, which  lie outside the horizontal $B$-axis, are called the  {\it constrictions}. 
 
 5)  For every $r\in\zz$ the $r$-th phase-lock area is symmetric to the $-r$-th one with respect to the vertical 
 $A$-axis.
  
6) Every phase-lock area is symmetric with respect to the horizontal $B$-axis. See Figures 1--5 below.

\begin{figure}[ht]
  \begin{center}
   \epsfig{file=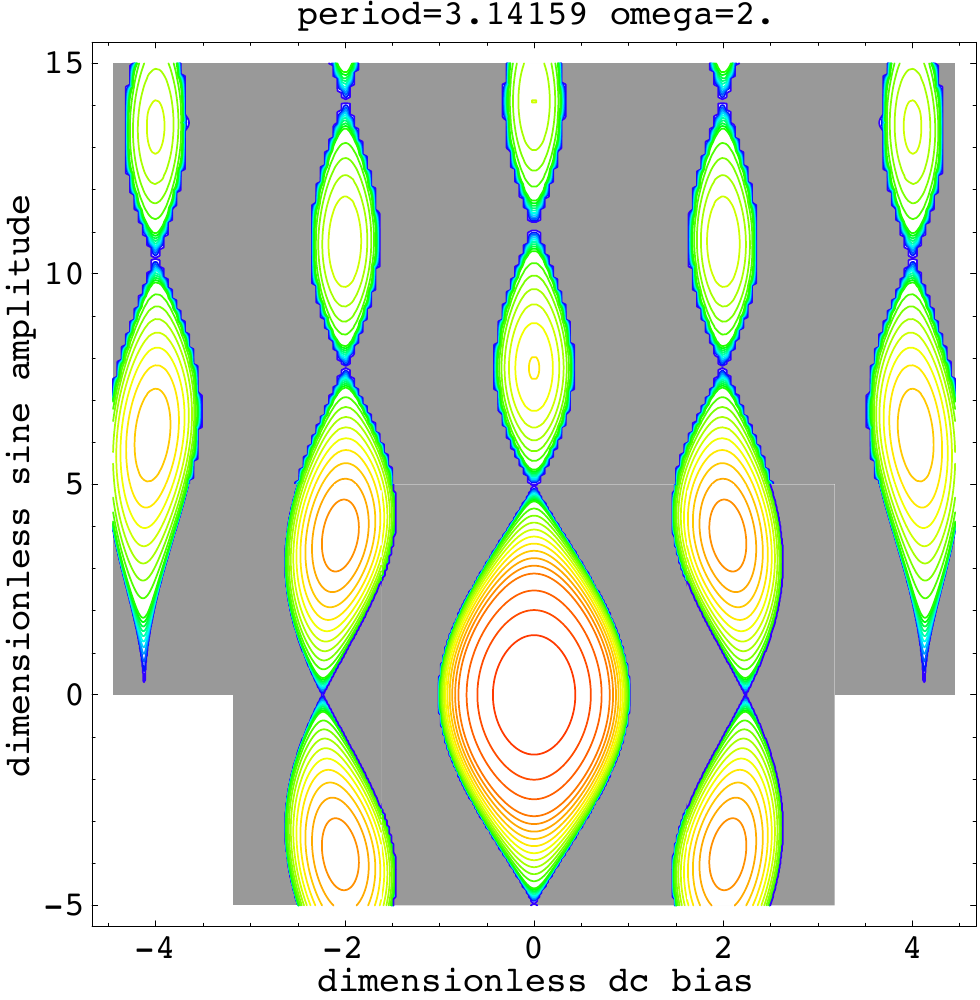}
    \caption{Phase-lock areas and their constrictions for $\omega=2$. The abscissa is $B$, the ordinate is $A$. Figure 
    taken from \cite[fig. 1a)]{bg2}}
  \end{center}
\end{figure} 

\begin{figure}[ht]
  \begin{center}
   \epsfig{file=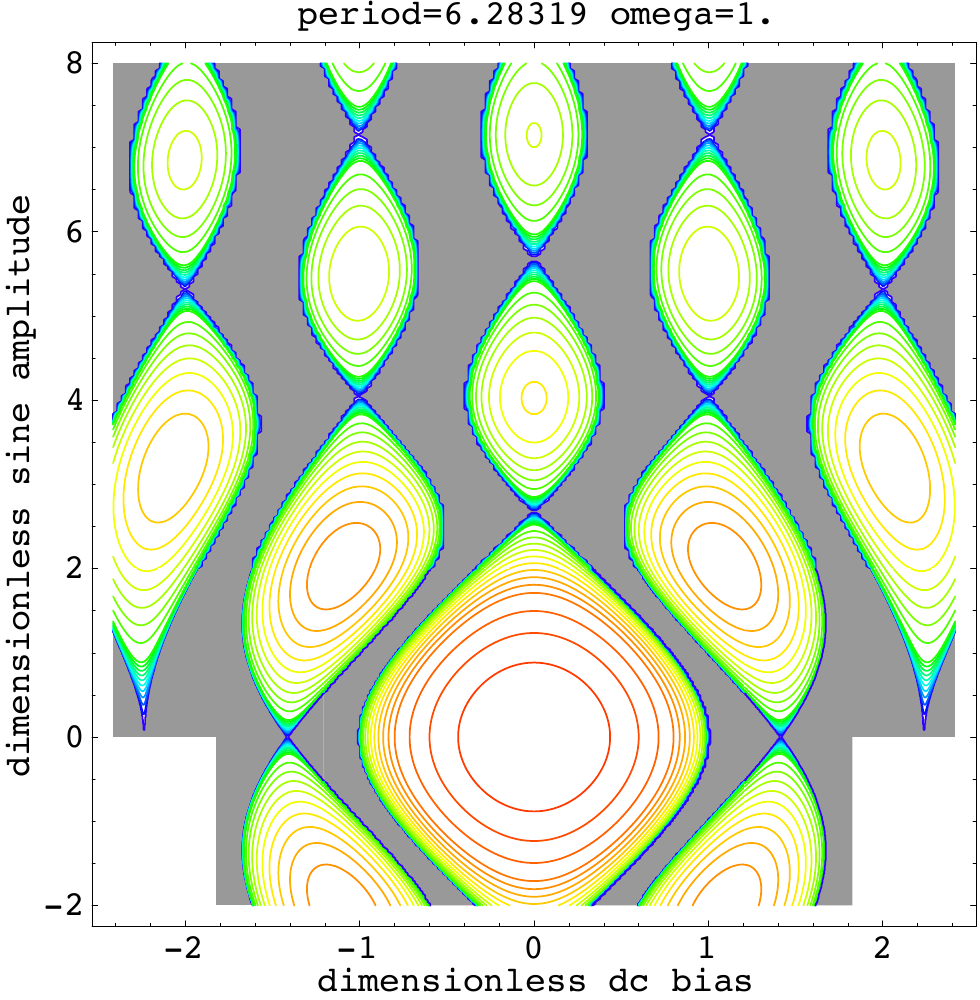}
    \caption{Phase-lock areas and their constrictions  for $\omega=1$. The abscissa is $B$, the ordinate is $A$. 
    Figure 
    taken from \cite[fig. 1b)]{bg2}}
     \end{center}
\end{figure} 
 
 \begin{figure}[ht]
  \begin{center}
   \epsfig{file=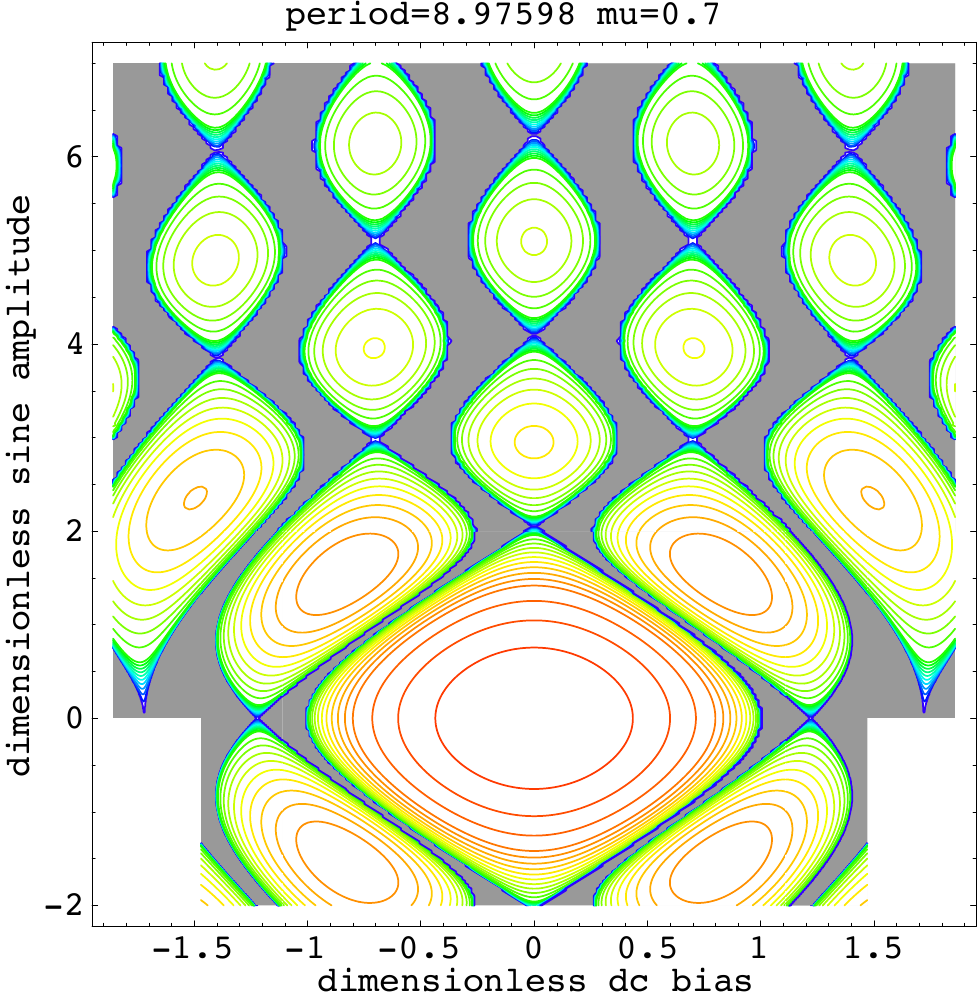}
    \caption{Phase-lock areas and their constrictions  for $\omega=0.7$. Figure taken from \cite[p. 331]{bt1}, 
    see also \cite[fig. 1c)]{bg2}.}
    \end{center}
\end{figure} 

 \begin{figure}[ht]
  \begin{center}
   \epsfig{file=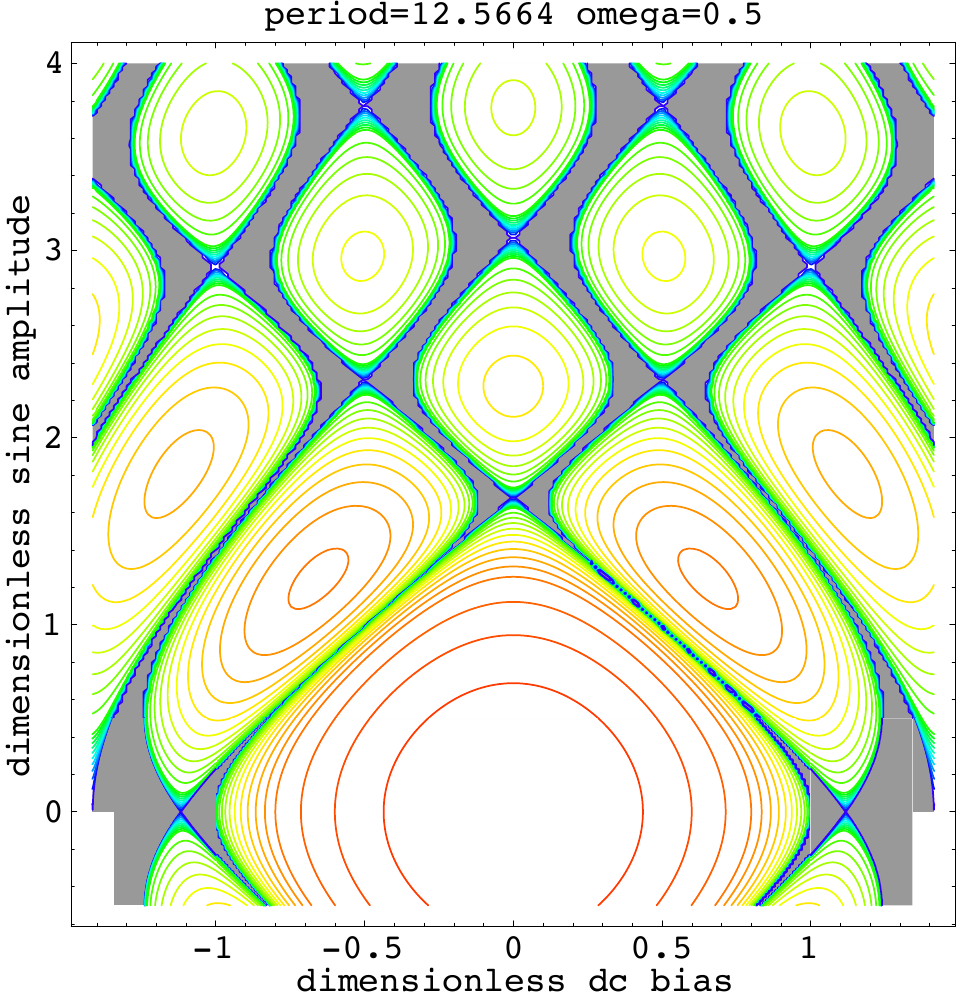}
    \caption{Phase-lock areas and their constrictions  for $\omega=0.5$. Figure 
    taken from \cite[fig. 1d)]{bg2}}
     \end{center}
\end{figure} 

\begin{figure}[ht]
  \begin{center}
   \epsfig{file=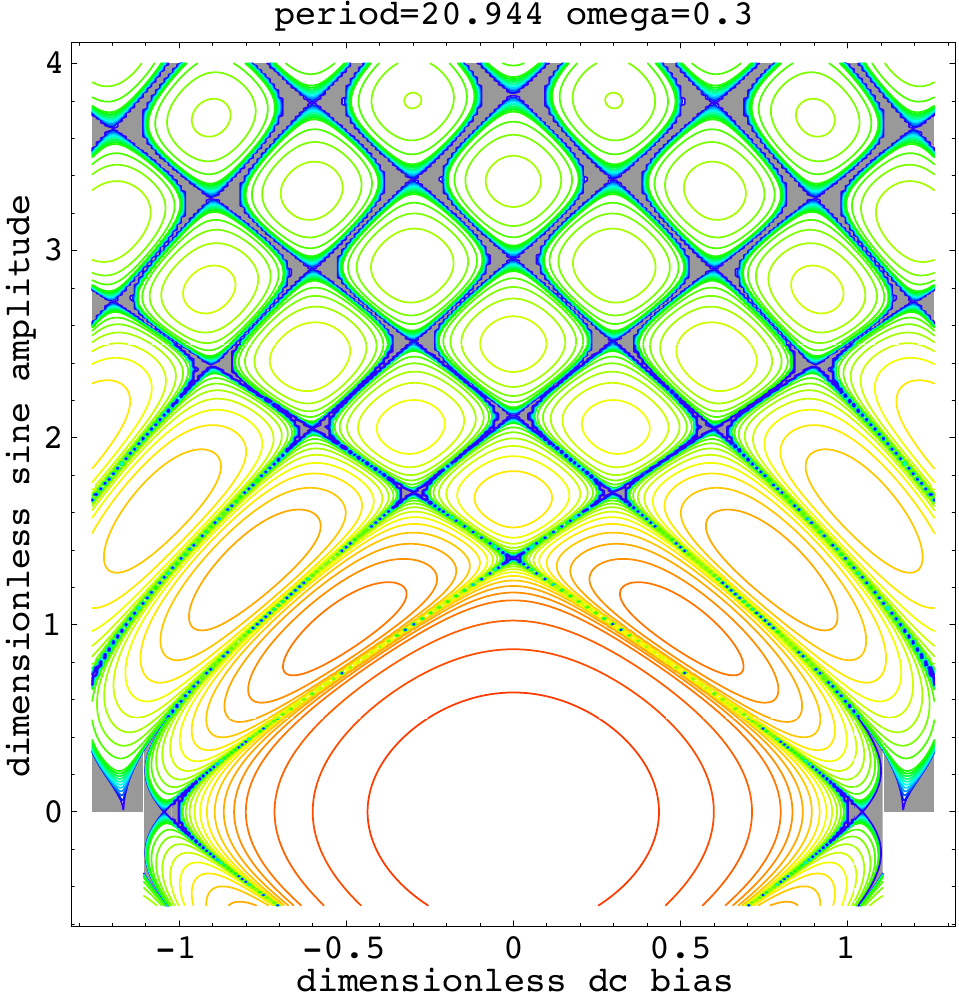}
    \caption{Phase-lock areas and their constrictions  for $\omega=0.3$. Figure 
    taken from \cite[fig. 1e)]{bg2}}
     \end{center}
\end{figure}

\begin{definition} For every $r\in\zz$ and $\omega>0$ we consider the vertical line  
$$\Lambda_r=\{ B=\omega r\}\subset\rr^2_{(B,A)}$$
and we will call it the {\it axis} of the phase-lock area $L_r$. 
\end{definition}

Numerical experiences made by V.M.Buchstaber, S.I.Tertychnyi, V.A.Kleptsyn, D.A.Filimonov, 
I.V.Schurov led to the following conjecture, which was stated and partially investigated in \cite{4}, see also 
\cite[section 5]{bg2}.

\begin{conjecture} \label{garl} (\cite[experimental fact A]{4}, \cite[conjecture 5.17]{bg2}).  
The upper part $L_r^+=L_r\cap\{ A\geq0\}$ of each phase-lock area $L_r$ 
is a garland of infinitely many connected components separated by constrictions 
$\mathcal A_{r,1}, \mathcal A_{r,2}\dots$ lying in its axis $\La_r=\{ B=r\omega\}$ and 
ordered by their ordinates  $A$, see the figures below. 
\end{conjecture}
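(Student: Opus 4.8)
The plan is to work entirely on the complex side. First I would complexify the family (\ref{josvec}): the substitution $z = e^{i\tau}$ turns the lift of (\ref{jostor}) into a Riccati equation on $\mathbb{CP}^1$, which is the projectivization of a $2\times 2$ linear system with two non-resonant irregular singular points, at $z=0$ and at $z=\infty$, whose coefficients depend analytically on $(l,\mu) = (B/\omega,\, A/2\omega)$; this is precisely the Buchstaber--Tertychnyi double confluent Heun equation. The real dynamics of (\ref{jostor}), and in particular the rotation number, are encoded in the monodromy $\mathcal M = \mathcal M(l,\mu)$ of this linear system along the unit circle $|z|=1$. I would use the standard dictionary between mode-locking and the spectral type of $\mathcal M$: interior points of $L_r$ correspond to a hyperbolic $\mathcal M$ (real trace with $|\operatorname{tr}\mathcal M| > 2$ in the normalization where the formal monodromy at $0$ carries the integer $r$), the boundary $\partial L_r$ to a parabolic $\mathcal M$ with $|\operatorname{tr}\mathcal M| = 2$, and a constriction to the fully degenerate case $\mathcal M = \pm\operatorname{Id}$. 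The remaining ingredient is the transition matrix $M = \begin{pmatrix} M_{11} & M_{12} \\ M_{21} & M_{22}\end{pmatrix}$, $\det M = 1$, between the canonical sectorial solution bases at $0$ and at $\infty$, in terms of which $\mathcal M$ has the schematic form $\mathcal M = \mathcal M_0^{1/2}\, M^{-1}\mathcal M_\infty M\, \mathcal M_0^{1/2}$ with diagonal local monodromies $\mathcal M_0,\mathcal M_\infty$ whose entries are explicit functions of $l$.

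The first genuine step is to read off the location of constrictions from $M$. Writing out $\mathcal M = \pm\operatorname{Id}$ and using $\det M = 1$, the diagonal condition $\operatorname{tr}\mathcal M = \pm 2$ depends only on the product $M_{12}M_{21}$ (since $M_{11}M_{22} = 1 + M_{12}M_{21}$), while the vanishing of the two off-diagonal entries of $\mathcal M$ gives two further relations in which $M_{12}$ and $M_{21}$ enter separately, weighted by the formal exponents at $0$ and at $\infty$. Here I would invoke the key result of the paper, that both $M_{12}$ and $M_{21}$ are non-zero at every constriction: this lets me divide, and the two off-diagonal relations collapse to a single equation of the shape
\begin{equation}
\frac{M_{12}}{M_{21}} = -\,\chi(l),
\label{ratioform}
\end{equation}
where $\chi(l)$ is an explicit function of the formal monodromy exponent, unimodular up to a real factor. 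The content of \eqref{ratioform} is that at a constriction the ratio of the off-diagonal connection coefficients is pinned to a computable value determined solely by $l = B/\omega$.

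The second step is the placement on the axis. I would show that $\chi(l)$ is real and positive precisely when $l\in\zz$, i.e. exactly on the axes $\Lambda_r = \{l = r\}$, and has non-trivial phase off the axes; correspondingly, the symmetry $l\mapsto 2r-l$ of the linear system across $\Lambda_r$ (which conjugates $\mathcal M_0,\mathcal M_\infty$ to their inverses and interchanges $M_{12}\leftrightarrow M_{21}$) exhibits $\Lambda_r$ as the real slice on which $\mathcal M$ is conjugate to a real matrix, so that $\operatorname{tr}\mathcal M$ and $M_{12}/M_{21}$ are real there. Granting $M_{12}/M_{21} < 0$ at every constriction, \eqref{ratioform} forces $\chi(l) > 0$, hence $l = r$: every constriction lies on $\Lambda_r$. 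The same sign information yields the garland structure along the axis: restricting to $\Lambda_r$, the trace $\operatorname{tr}\mathcal M = 2 + (\text{sign-definite factor})\cdot M_{12}M_{21}$ stays $\geq 2$ for all $A\geq 0$, with equality exactly at the constrictions and at the growth point $A = 0$, because a negative real ratio makes $M_{12}M_{21}$ of the sign that keeps the trace on the hyperbolic side between consecutive constrictions. Thus the open segments of $\Lambda_r \cap \{A>0\}$ between consecutive constrictions lie in $\operatorname{int} L_r$, the area is threaded along its axis, and $L_r^+$ is the asserted garland separated by on-axis constrictions ordered by ordinate, proving both the interval conjecture and Conjecture~\ref{garl}.

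The main obstacle is the sign statement $M_{12}/M_{21} < 0$ at every constriction, the one genuinely new inequality on which the whole argument rests. To attack it I would combine three inputs: (i) the Bessel asymptotics (\ref{bessas}), which pin down the sign of each off-diagonal coefficient for constrictions of large ordinate $A$, giving the inequality for all but finitely many constrictions on each axis; (ii) real-analyticity and a monotonicity and transversality analysis of $M_{12}$ and $M_{21}$ along $\Lambda_r$, to control their zeros and relative sign between constrictions; and (iii) a continuity-and-contradiction argument showing that if the ratio became positive at some constriction, then by \eqref{ratioform} that constriction would leave the axis, producing an off-axis symmetric pair of constrictions and hence an extra boundary self-intersection incompatible with the global garland count forced by the asymptotics (\ref{bessas}). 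The hard part will be turning (i)--(iii) into a proof valid at \emph{every} constriction rather than only at the topmost one; the difficulty is that $M_{12}$ and $M_{21}$ are transcendental connection coefficients of an irregular equation, so neither an exact evaluation nor a global monotonicity is available a priori, and one must extract their relative sign from integral representations and Stokes data.
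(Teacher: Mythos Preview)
The statement you are trying to prove is an open \emph{conjecture} in the paper, not a theorem; the paper does not prove it. What the paper does is exactly the reduction you rediscover: it shows (Section~4, via Propositions~\ref{cb0} and the chain \ref{posit} $\Rightarrow$ \ref{cocon} $\Rightarrow$ \ref{queer} $\Rightarrow$ \ref{garl}) that the sign condition on the off-diagonal ratio of the transition matrix $Q$---your $M_{12}/M_{21}<0$, the paper's $c/b>0$---would imply Conjecture~\ref{garl}. So your overall architecture matches the paper, but the paper stops precisely where your ``main obstacle'' begins: the sign inequality is stated as Conjecture~\ref{posit} and left open.

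There are two concrete gaps in the plan beyond that. First, your relation \eqref{ratioform} does not hold. At a constriction one has $l\in\zz$, so the formal monodromies at $0$ and at $\infty$ are both trivial, and the Stokes multipliers $c_0,c_1$ vanish (Theorem~\ref{thec}); the second equation in (\ref{quad}) then reads $0=0$ and imposes \emph{no} relation on $b/c$. The paper proves only that $b,c\neq0$ (Lemma~\ref{lkey}) and that $b/c\in\rr$ (Proposition~\ref{reality}); the ratio is not pinned to any function $\chi(l)$ and in fact varies from constriction to constriction along the same axis. Second, your ``placement on the axis'' step conflates $l\in\zz$ with $l=r$. That every constriction has integer $l$ is already known (Theorem~\ref{thec}); the genuine content of Conjecture~\ref{garl} is that a constriction of $L_r$ has $l=r$ rather than some smaller $l\equiv r\pmod 2$ (see Remark~\ref{ssylka-4}). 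Nothing in your argument excludes $l<r$. Finally, your proposed attack (i)--(iii) on the sign condition is a heuristic, not a proof: (i) covers only large $A$; (ii) assumes monotonicity of transcendental connection coefficients that is nowhere established; and (iii) is circular, since the ``global garland count'' it invokes is part of what you are trying to prove. In short, your proposal reproduces the paper's reduction and then restates the open problem.
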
 

\begin{remark} \label{ssylka-4} Conjecture \ref{garl} was proved in \cite{4} for $\omega\geq1$. It was proved in the same 
paper that for every $\omega>0$ all the constrictions of every phase-lock area $L_r$ have abscissas 
$B=\omega l$, $l\in\zz$, $l\equiv r(mod2)$, $l\in[0,r]$. It is known that the zero phase-lock 
area $L_0$ contains the whole $A$-axis $\Lambda_0$, and all its constrictions lie in 
$\Lambda_0$; each point of the intersection $\partial L_0\cap\Lambda_0$ is a constriction. This follows from symmetry of the phase-lock area $L_0$  with respect to 
its axis $\Lambda_0$ and the fact that the  interval $(-1,1)$ of the $B$-axis is 
contained in $Int(L_r)$, see \cite[proposition 5.22]{bg2}. The two above 
statements together imply that for $r=\pm1,\pm2$ all the constrictions of the phase-lock area $L_r$ 
lie in its axis $\La_r=\{ B=r\omega\}$. But it is not known whether the latter 
statement holds for every $r\in\zz\setminus\{0\}$. 
\end{remark} 

\begin{conjecture} \label{garl1} \cite[conjecture 5.19]{bg2} $[\mca_{r,j},\mca_{r,j+1}]\subset L_r$ for every $r\in\zz$ and $j\in\nn$.
\end{conjecture}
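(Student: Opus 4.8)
The plan is to work entirely on the axis $\La_r$ and to convert the geometric inclusion $[\mca_{r,j},\mca_{r,j+1}]\subset L_r$ into a sign statement about a single analytic function of the ordinate, built from the transition matrix of the complexified linear system. First I would record the geometric reformulation. Writing $\partial L_r$ as the two graphs $B=g_{r,\pm}(A)$ from statement (2), the axis point $(r\omega,A)$ lies in the interior of $L_r$ exactly when $r\omega$ lies strictly between $g_{r,-}(A)$ and $g_{r,+}(A)$, while the points of $\partial L_r\cap\La_r$ are the ordinates at which $g_{r,-}(A)=r\omega$ or $g_{r,+}(A)=r\omega$. At a constriction both branches meet the axis at one common ordinate (a pinch) and the axis stays in $\overline{L_r}$ on both sides, so a constriction never pushes the axis out of $L_r$; at a simple intersection exactly one branch crosses $\La_r$ transversally and the axis does leave $L_r$. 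Hence, after using the symmetry $A\mapsto-A$ (statement (6)) to restrict to positive ordinates, Conjecture \ref{garl1} becomes the assertion that \emph{no simple intersection lies strictly between two subsequent constrictions} $\mca_{r,j}$ and $\mca_{r,j+1}$. This is the form I would actually prove.

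Next I would set up the dictionary with the complexified system. Via the Buchstaber--Tertychnyi construction the family (\ref{jostor}) is the projectivization of a family of $2\times2$ linear systems with irregular non-resonant singular points at $0$ and $\infty$; fixing canonical solution bases at the two points yields a transition matrix whose off-diagonal entries I denote $E_{12}(B,A)$ and $E_{21}(B,A)$, analytic in the parameters. Using the symmetry special to $B=r\omega$, I would show that the inside/outside alternative along $\La_r$ is governed by the sign of a real-analytic function $Q(A)$ built from $E_{12}$ and $E_{21}$: $Q>0$ on the interior of $L_r$, $Q=0$ precisely at the points of $\partial L_r\cap\La_r$, and $Q$ changes sign transversally exactly at the simple intersections while merely touching zero at the constrictions. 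The polynomial-solution criterion recalled in the introduction identifies the simple intersections as the ordinates at which one of the two ``conjugate'' double confluent Heun equations has a polynomial solution, i.e. as zeros of a single off-diagonal entry; so a simple intersection forces one of $E_{12}$, $E_{21}$ to vanish.

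The decisive input is the key result announced in the abstract: at every constriction both $E_{12}$ and $E_{21}$ are non-zero. Combined with the previous paragraph this already shows that a constriction and a simple intersection can never coincide, and it lets me write the local sign of $Q$ at a constriction as the sign of the ratio $E_{12}/E_{21}$ up to a positive factor. The whole conjecture then reduces to the sign statement $E_{12}/E_{21}<0$ at each constriction: granting it, the axis is interior to $L_r$ on both sides of every constriction, and a monotonicity-and-continuity analysis of the meromorphic ratio $E_{12}/E_{21}$ along $\La_r$ (whose zeros and poles are precisely the simple intersections) excludes any intermediate simple intersection between two consecutive constrictions, forcing $Q>0$ on the open interval $(\mca_{r,j},\mca_{r,j+1})$ and hence $[\mca_{r,j},\mca_{r,j+1}]\subset L_r$. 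I would anchor the argument at the cases already known, $\omega\ge1$ and $r=\pm1,\pm2$ (Remark \ref{ssylka-4}), and control the far field along the axis by the Bessel asymptotics (\ref{bessas}).

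The hard part is exactly this last sign statement: proving $E_{12}/E_{21}<0$ at every constriction. This is a genuinely global positivity fact about connection coefficients of an irregular-singular linear ODE, and it does not follow formally from their non-vanishing. I would attempt it through explicit integral or recursive (continued-fraction) representations of the two connection coefficients together with a monotonicity argument in $A$ along $\La_r$, but I expect that pinning the sign down uniformly over all constrictions and all $\omega$ is the real obstruction; indeed this is precisely the residual conjecture to which the problem is reduced.
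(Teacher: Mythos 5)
Your proposal cannot be checked against ``the paper's own proof'' because the paper has none: the statement you were given is Conjecture \ref{garl1}, which remains open here. What the paper proves unconditionally are partial results --- Theorem \ref{locon} (via the key Lemma \ref{lkey}, that both off-diagonal entries of the transition matrix $Q$ are nonzero at every constriction) and Theorem \ref{intcon} (the ray $Sr$ lies in $L_r$) --- plus conditional implications in Section 4: Conjecture \ref{posit} (the sign condition $\frac cb>0$ at every constriction) implies Conjecture \ref{cocon}, which implies Conjecture \ref{queer}, which implies Conjecture \ref{garl}. Your scheme reproduces exactly this program: your sign function of the ordinate is in substance $(-1)^l\operatorname{tr}\wt M-2=c_0c_1$ from Proposition \ref{ineql}; your ``decisive input'' is Lemma \ref{lkey}/(\ref{inbc}); your local sign analysis at a constriction is formulas (\ref{as01})--(\ref{astr}) and Proposition \ref{cb0}; and your terminal sign statement $E_{12}/E_{21}<0$ is precisely Conjecture \ref{posit}. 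Since you concede in your final paragraph that you cannot establish that sign statement, your argument reduces one open conjecture to another. That is the genuine --- and here unavoidable --- gap: no proof of Conjecture \ref{garl1} exists to date, only reductions of the kind you sketch.

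Beyond that central gap, your reduction is looser than what the paper can actually justify. Your opening ``geometric reformulation'' already assumes (i) that the constrictions $\mca_{r,j}$ lie on the axis $\La_r$ --- that is Conjecture \ref{garl}, known only for $\omega\geq1$ and for $|r|\leq2$ (Remark \ref{ssylka-4}); and (ii) that at a constriction ``the axis stays in $\overline{L_r}$ on both sides'' --- that is exactly positivity, i.e.\ Conjecture \ref{cocon}; Theorem \ref{locon} gives only the positive/negative dichotomy and does not exclude negative constrictions. Further, your claim that the zeros and poles of the ratio $E_{12}/E_{21}$ along $\La_r$ ``are precisely the simple intersections'' conflates the Stokes multipliers $c_0,c_1$ (whose vanishing pattern characterizes constrictions and simple intersections, Theorems \ref{thec}, \ref{thpol} and \ref{thpol2}) with the entries $b,c$ of the transition matrix; these are related only through identity (\ref{quad}), and the concluding ``monotonicity-and-continuity analysis'' excluding a simple intersection between consecutive constrictions is asserted, not performed. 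In the paper, even granting Conjecture \ref{posit}, reaching Conjectures \ref{garl} and \ref{garl1} requires the chain through Conjectures \ref{cocon} and \ref{queer}, Proposition \ref{impl}, Proposition \ref{cominf}, and Theorem \ref{intcon} (equivalently, Conjecture \ref{coinc}); none of this machinery is replaced by anything in your sketch. So your write-up is a fair reconstruction of the paper's conditional framework, but it proves strictly less than the paper's unconditional theorems and does not prove the conjecture.
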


\begin{conjecture} \label{right}  (see also \cite[conjecture 5.26]{bg2}) 
 Each phase-lock area $L_r$ with $r\in\nn$ lies on the right from the  axis $\La_{r-1}$: that is, 
$B|_{L_r}>(l-1)\omega$.
\end{conjecture}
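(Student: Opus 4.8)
The plan is to convert the geometric inclusion into a pointwise inequality for the rotation number and then, via the complexification developed in the paper, to reduce it to a sign condition on the transition matrix. It suffices to treat $r\in\nn$, since the case of negative $r$ follows from the reflection symmetry $L_r\leftrightarrow L_{-r}$ of statement 5) above. First I would exploit the monotonicity of the rotation number in the abscissa: because the $\phi$-component of the field (\ref{josvec}) strictly increases with $B$, a comparison of solutions shows that $B\mapsto\rho(B,A)$ is non-decreasing for each fixed $A$. Hence, if some $(B_0,A_0)\in L_r$ had $B_0\leq(r-1)\omega$, then $\rho((r-1)\omega,A_0)\geq\rho(B_0,A_0)=r$; so Conjecture \ref{right} is implied by the pointwise bound
$$\rho(B,A)<r\qquad\text{for every }(B,A)\in\La_{r-1}.$$
For $r=1$ this is already known: by Remark \ref{ssylka-4} the whole axis $\La_0$ lies in $L_0$, so $\rho\equiv0<1$ on $\La_0$, and monotonicity then places $L_1$ strictly to the right of $\{B=0\}$. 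The task is thus to establish the displayed bound for all $r\geq2$.

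Second, I would explain why this bound is hard and how the known results cut it down. By the quantization of constriction abscissas (Remark \ref{ssylka-4}), every constriction of $L_r$ lies on a line $\{B=l\omega\}$ with $l\in[0,r]$ and $l\equiv r\ (\mathrm{mod}\ 2)$; since a constriction belongs to $L_r$, the displayed bound forces $l>r-1$, i.e. $l=r$, so the bound entails that all constrictions of $L_r$ lie on its own axis $\La_r$ (and, modulo the garland structure of statement 4), is in fact equivalent to Conjecture \ref{garl}). Applying the main theorem of the paper with the index shifted by one, the upward ray of $\La_{r-1}$ issued from its highest non-constriction intersection point already lies in $L_{r-1}$, where $\rho\equiv r-1<r$. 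Hence only the finitely many bounded components of $L_{r-1}$ near the $B$-axis, together with the potential off-axis constrictions between them, remain to be excluded.

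Finally, for those remaining components I would pass to the complexification used throughout the paper: the projectivized family of second order linear systems with irregular non-resonant singular points at $0$ and $\infty$, and its transition matrix between the canonical solution bases at the two points. The candidate off-axis constrictions occur at distinguished parameter values, at which the paper already proves that both off-diagonal entries of the transition matrix are non-zero; whether such a constriction can actually leave the axis is then governed by the sign of the ratio of these two entries. The hard part will be to prove that this ratio is negative at every constriction: it cannot be read off from the Bessel asymptotics (\ref{bessas}), which only control large $|A|$, and requires genuine global information about the transition data of the double confluent Heun equation as a function of the parameters along $\La_{r-1}$. Establishing this negativity is precisely the reduction the paper isolates, and a full resolution of it would close the argument and yield Conjecture \ref{right} in all cases.
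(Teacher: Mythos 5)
First, note that the statement you were asked to prove is one of the paper's \emph{open conjectures}: the paper itself contains no proof of Conjecture \ref{right}, only implications between conjectures (Conjecture \ref{coinc} implies Conjecture \ref{right}, which in turn implies Conjecture \ref{garl}). Your proposal is likewise not a proof, and you concede as much at the end: everything is ultimately deferred to the unproven negativity statement, which is exactly the paper's Conjecture \ref{posit}. Within those limits, your opening reduction is sound: monotonicity of $\rho$ in $B$ (used in the paper in the proof of Proposition \ref{cominf}) does reduce Conjecture \ref{right} to the bound $\rho<r$ on $\La_{r-1}$, equivalently to $L_r\cap\La_{r-1}=\emptyset$, and the case $r=1$ does follow from Remark \ref{ssylka-4}.

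The genuine gap is in your final step, and it goes beyond the incompleteness you admit: even granting the negativity conjecture, your argument would not close. In the paper's own scheme, negativity of the ratio (Conjecture \ref{posit}) yields only positivity of all constrictions (Conjecture \ref{cocon}), hence absence of queer constrictions (Conjecture \ref{queer}), hence Conjecture \ref{garl}: all constrictions of $L_r$ lie on $\La_r$. That conclusion is strictly weaker than what your reduction needs. Indeed, by the parity constraint of Remark \ref{ssylka-4}, no constriction of $L_r$ can lie on $\La_{r-1}$ in any case (the condition $l\equiv r\pmod 2$ fails for $l=r-1$); so the obstruction to $L_r\cap\La_{r-1}=\emptyset$ is not constrictions at all, but the possibility that an interior arc of $L_r$, or a non-constriction boundary point of $L_r$, crosses $\La_{r-1}$. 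The sign analysis at constrictions (Lemma \ref{lkey} together with negativity) is purely local at constrictions and says nothing about such crossings; nor does Theorem \ref{thpol2} forbid them: at a hypothetical point of $\partial L_r\cap\La_{r-1}$ one has $l=r-1$, $\rho=r$, the parity and inequality conditions fail, and the theorem merely asserts that no polynomial solution exists there --- no contradiction arises. Similarly, Theorem \ref{intcon} applied to $r-1$ only places the upward ray of $\La_{r-1}$ issued from $\mcp_{r-1}$ inside $L_{r-1}$; on the segment of $\La_{r-1}$ below $\mcp_{r-1}$ nothing known prevents entry into $L_r$. This is precisely why the paper derives Conjecture \ref{right} only from the stronger Conjecture \ref{coinc}, which controls the \emph{entire} intersection $L_r^+\cap\La_r$ and thereby permits a separation argument (the right boundary of $L_{r-1}^+$ separates $\La_{r-1}$ from $L_r$). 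For the same reason, your parenthetical claim that your displayed bound is ``in fact equivalent to Conjecture \ref{garl}'' is unjustified: the implication from Conjecture \ref{garl} back to the bound would again require excluding non-constriction crossings, which no result in the paper provides.
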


The main results of the present paper are Theorems \ref{locon} and \ref{intcon} 
stated below, which are partial results 
towards confirmation of Conjectures 
\ref{garl} and \ref{garl1}.

\begin{theorem} \label{locon} For every $\omega>0$ and every constriction $(B_0,A_0)$ there exists a punctured 
neighborhood $U=U(A_0)\subset\rr$ such that the punctured interval $B_0\times (U\setminus\{A_0\})\subset B_0\times\rr$ 
either lies entirely in the interior of  a phase-lock area (then the constriction is called {\bf positive}), or lies entirely outside 
the union of the phase-lock areas (then the constriction is called {\bf negative}), see Fig.6. 
\end{theorem}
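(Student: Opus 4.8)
The plan is to pass to the complexification and replace the rotation number by the trace of the monodromy. Under the substitution $z=\tan(\phi/2)$ the equation~(\ref{jostor}) becomes a Riccati equation with real coefficients, so its time-$2\pi$ Poincar\'e map is a real M\"obius transformation $M=M_{B,A}\in SL_2(\mathbb{R})$ depending analytically on $(B,A)$; this is precisely the Buchstaber--Tertychnyi linearization leading to the double confluent Heun equation. Set $f(B,A)=\operatorname{tr} M_{B,A}$, an analytic function. On the appropriate sheet the interior of the phase-lock area is the hyperbolicity locus $\{f>2\}$, its boundary $\partial L_r$ is $\{f=2\}$ (parabolic monodromy), and the complementary germ $\{f<2\}$ (elliptic monodromy) carries a non-integer rotation number.

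First I would record the squeeze principle: since $\rho$ is continuous and $\rho(B_0,A_0)=r$, on any side where $(B_0,A)\notin L_r$ one has $\rho(B_0,A)\in(r-1,r)\cup(r,r+1)$, an interval free of integers, so $(B_0,A)$ lies outside the union of all phase-lock areas. Consequently the whole theorem reduces to a statement about the one-variable analytic function $\psi(A):=f(B_0,A)-2$: I must show that $\psi$ does not change sign as $A$ passes through $A_0$, i.e. that $\psi$ vanishes to EVEN order at $A_0$. If so, $\psi>0$ near $A_0$ gives a positive constriction (punctured vertical interval in $\operatorname{Int} L_r$) and $\psi<0$ gives a negative one (punctured interval outside all areas), which is exactly the asserted dichotomy.

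Next I would analyze the local geometry of this critical point. Because $\partial L_r$ is the union of the two analytic graphs $B=g_{r,\pm}(A)$ with $g_{r,-}\le g_{r,+}$ and equality only at $A_0$, the width $g_{r,+}-g_{r,-}$ has a minimum at $A_0$, so the two branches are mutually tangent there with common slope $s=g_{r,\pm}'(A_0)$; equivalently the Hessian of $f$ at the constriction is the perfect square $c_0\bigl(B-B_0-s(A-A_0)\bigr)^2$, whose null direction is the common tangent. This immediately settles the case $s\ne0$: the vertical direction is transverse to the null direction, so $\psi(A)=c_0s^2(A-A_0)^2+\cdots$ has even order two and the constriction is negative. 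The remaining, essential case is the vertical common tangent $s=0$, where the vertical line is itself the degenerate direction and the leading quadratic part of $\psi$ vanishes.

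The hard part is therefore the case $s=0$, and here I would bring in the transition matrix $T$ of the linear system between the canonical solution bases at the two irregular singular points $0$ and $\infty$. Writing $\psi$ as a nonvanishing analytic factor times the product of the two boundary defects $(g_{r,-}(A)-B_0)(g_{r,+}(A)-B_0)$ and expressing these defects through the off-diagonal entries $\ell_1(A),\ell_2(A)$ of $T$, the sign of $\psi$ is governed by $\ell_1\ell_2$. The key input, established from the analysis of the double confluent Heun equation, is that both off-diagonal entries are non-zero at the constriction; combined with the reality (complex-conjugation) constraint satisfied by $T$ on the real parameter slice, this should force the two boundary defects to vanish to the SAME order, so that $\psi$ vanishes to even order. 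I expect this to be the main obstacle, for it is exactly where the crude data (analyticity, the ordering $g_{r,-}\le g_{r,+}$, tangency) are insufficient: they already permit a spurious ``mixed'' picture such as $g_{r,-}-B_0\sim-(A-A_0)^3$, $g_{r,+}-B_0\sim(A-A_0)^2$, and only the finer Heun-equation structure excludes it. The sign of the leading coefficient, i.e. of the ratio $\ell_1/\ell_2$, then separates positive from negative constrictions, which is where the companion conjecture on negativity of that ratio enters.
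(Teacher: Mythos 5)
Your global setup does match the paper's: complexification, the trace criterion (Proposition \ref{ineql}: a point lies in a phase-lock area iff $|\operatorname{tr}\wt M|\geq 2$, in its interior iff the inequality is strict), and the reduction --- via continuity of $\rho$ plus discreteness of the intersections of the axis with the boundary curves --- to showing that the analytic function $\psi(A)=(-1)^l\operatorname{tr}\wt M(B_0,A)-2$ vanishes to even order at $A_0$. You also correctly single out the paper's key lemma (Lemma \ref{lkey}: both off-diagonal entries of the transition matrix $Q$ are nonzero at a constriction) together with reality as the decisive input. Up to this point you have reproduced the paper's skeleton.

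The gap is that you never supply the mechanism that converts Lemma \ref{lkey} into even-order vanishing, and the mechanism you sketch would not work as stated. The functions whose product equals $\psi$ on the axis are not your ``boundary defects'' $g_{r,\pm}(A)-B_0$ but the Stokes multipliers: for $l=B_0/\omega\in\zz$ one has $\operatorname{tr}\wt M=(-1)^l(2+c_0c_1)$, so $\psi=c_0c_1$, and both $c_0,c_1$ vanish at the constriction (Theorem \ref{thec}). Your proposed factorization of $f-2$ through the two boundary graphs is unproven (the trace could vanish on them with multiplicity or have complex zero branches), and the plan to ``express the defects through the off-diagonal entries of $T$'' misidentifies the objects: by Lemma \ref{lkey} those entries do \emph{not} vanish at the constriction, so they cannot measure quantities that do. What the paper actually uses is an algebraic identity tying $Q$ to the Stokes data, obtained from the fact that $Q$ conjugates the monodromy in the basis at zero to the monodromy in the basis at infinity --- the second relation of (\ref{quad}), which for $l\in\zz$ reads $a\,c_0c_1=b\,c_1-c\,c_0$. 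Writing $s_j(t)=c_j(B_0,A_0+t)$, since $b(0),c(0)\neq0$ and $s_0,s_1\to0$, this identity forces $s_1(t)\simeq\frac{c(0)}{b(0)}s_0(t)$, hence
\begin{equation*}
\psi\simeq\frac{c(0)}{b(0)}\,s_0^2(t)=q\,t^{2n}(1+o(1)),\qquad q\in\rr\setminus\{0\},
\end{equation*}
where $q$ is real and nonzero because the Stokes multipliers are real on the axis (Theorem \ref{tre}); this is exactly the even-order vanishing with fixed sign. Without this identity, whose derivation is the content of Theorem \ref{tconn}, the non-vanishing of $b$ and $c$ at the constriction gives you nothing. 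A secondary flaw: your dispatch of the case of non-vertical common tangent rests on the unjustified assertion that the Hessian of $f$ at the constriction is a nonzero perfect square annihilating the common tangent (it could vanish identically); that case can in fact be settled by elementary geometry (if the tangent is non-vertical, the axis leaves the region between the two graphs on both sides, so by your own squeeze principle the constriction is negative), but note that in the paper's argument no case distinction is needed at all.
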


\begin{figure}[ht]
  \begin{center}
   \epsfig{file=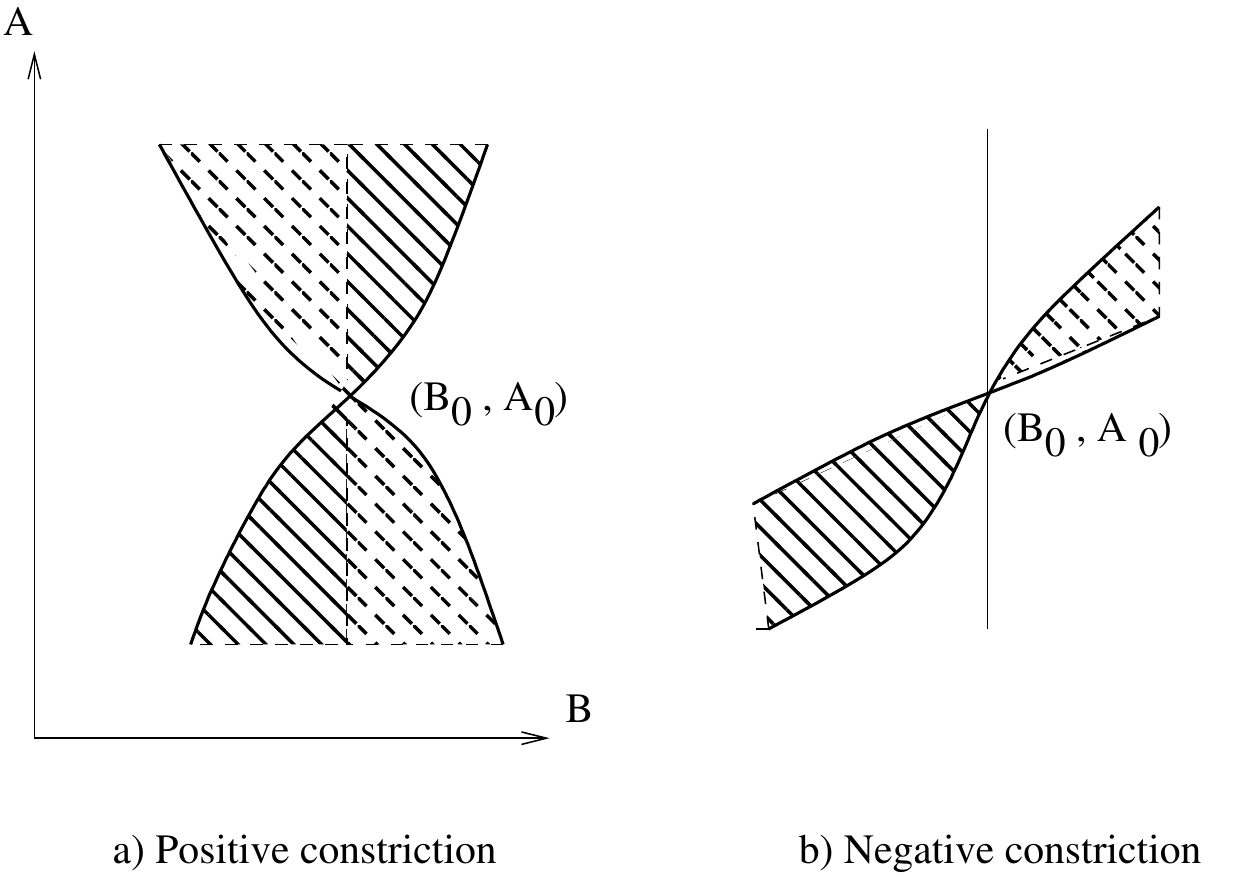}
    \caption{Positive and (conjecturally non-existing) negative constrictions. The shaded domains are phase-lock areas}
  \end{center}
\end{figure} 

To state the second theorem, let us introduce the following definition. 

\begin{definition} \label{simpint}  A {\it simple intersection} is a point of intersection of the boundary $\partial L_r$ 
with the axis 
$\Lambda_r$ that is not a constriction. The simple intersection with the maximal 
ordinate  $A$ will be 
called the {\it higher simple intersection} and denoted by $\mpp_r$. Set 
$$Sr=\{\omega r\}\times[A(\mcp_r),+\infty)\subset\Lambda_r:$$ 
this is the vertical ray in $\Lambda_r$ issued from the point $\mpp_r$ in the direction of increasing of the coordinate $A$. See  Fig.7. 
\end{definition}

\begin{conjecture} \label{cunique} 
For every $r\in\zz\setminus\{0\}$ the simple intersection with positive ordinate is unique.
\end{conjecture}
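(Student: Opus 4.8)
The plan is to recast Conjecture~\ref{cunique} as a counting problem for the zeros of a single off-diagonal entry of the transition matrix restricted to the axis, and then to show that this entry has exactly one zero on the positive ray. By the Buchstaber--Tertychnyi correspondence every point $(r\omega,A)\in\La_r$ carries a double confluent Heun equation with integer parameter $l=r$, and a simple intersection occurs exactly when the conjugate equation admits a polynomial solution; in the transition-matrix picture this is the vanishing of one prescribed off-diagonal entry, which I will denote $t_{12}(A)$. A constriction, by contrast, is a point of parabolic monodromy, where the discriminant $\dd$ vanishes while, by the key result quoted in the abstract, both off-diagonal entries remain non-zero. Hence no constriction is a zero of $t_{12}$, and the simple intersections with $A>0$ are precisely the zeros of $t_{12}(A)$ on $(0,+\infty)$; since at large ordinate every point of $\partial L_r\cap\La_r$ is a constriction, there are only finitely many such zeros. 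In these terms Conjecture~\ref{cunique} asserts that $t_{12}(A)$ has a single zero for $A>0$.

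First I would determine the sign of $t_{12}$ at the two ends of the ray. At $A=0$ the equation (\ref{jostor}) is autonomous and solvable in closed form, with rotation number $\rho(r\omega,0)=\tfrac1\omega\sqrt{r^2\omega^2-1}$ (equal to $0$ when $r\omega\le1$), which is strictly less than $r$; thus the axis begins strictly below $L_r$, and the explicit transition matrix of the unforced equation fixes the sign of $t_{12}(0)$. At the upper end the main result of the paper, the inclusion $Sr\subset L_r$, places the axis in the interior of $L_r$ for all $A\ge A(\mpp_r)$, so $t_{12}$ keeps the opposite, constant sign there; matching a WKB/Stokes analysis of the linear system at its two irregular singular points to the Bessel asymptotics (\ref{bessas}) makes this sign explicit and confirms that $t_{12}(A)\neq0$ for all large $A$. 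In particular the number of sign changes of $t_{12}$ on $(0,+\infty)$ is odd, so at least one simple intersection exists; the whole content of the conjecture is that this number is one.

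The heart of the matter, and the step I expect to be the genuine obstacle, is to exclude any further zero of $t_{12}(A)$ on the bounded interval $0<A<A(\mpp_r)$. Equivalently, once the axis leaves $L_r$ on the way down through $\mpp_r$ it must not re-enter before reaching $A=0$: a spurious pair of simple intersections would open an interior pocket of $L_r$ on the axis below $\mpp_r$, which the endpoint signs alone do not forbid. Since $t_{12}(A)$ continues holomorphically in the complexified parameter, I see two natural routes. The first is to derive a first-order linear differential equation in $A$ satisfied by $t_{12}$ and to show that its coefficient is sign-definite on the ray, forcing $t_{12}$ to be monotone and hence to vanish at most once. The second is to bound $t_{12}$ in a complex neighbourhood of the positive axis from the Stokes data and then to count its real zeros by the argument principle together with a Phragm\'en--Lindel\"of estimate. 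Either route reduces, in the same spirit as the reduction already performed in this paper for the constriction conjectures, to a sign condition on the off-diagonal data of the transition matrix along $\La_r$; granting such a condition, the inside/outside alternation of the axis is rigidly controlled and a second simple intersection would contradict $Sr\subset L_r$.

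Finally, I would record that for $\omega\ge1$ the conclusion is already implicit in the proof of Conjecture~\ref{garl} in \cite{4}: there the upper part of $L_r$ is a garland whose separating points are constrictions lying on $\La_r$, so the axis enters the lowest component exactly once, at $\mpp_r$, and then stays inside through every pinch, leaving $\mpp_r$ as the only simple intersection. The residual difficulty is therefore concentrated in the range $0<\omega<1$, where off-axis constrictions are not yet excluded and the single-zero property of $t_{12}(A)$ must be established directly; this is precisely where the present methods stop short of a full proof, and is the reason the statement is recorded as a conjecture rather than a theorem.
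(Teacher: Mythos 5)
This statement is one of the paper's \emph{open conjectures}: the paper offers no proof of it at all, only the remark that Conjecture \ref{coinc} would imply it, plus the partial results of Theorems \ref{locon} and \ref{intcon}. So your proposal cannot be matched against a proof in the paper; it has to stand on its own, and it does not. You concede in your final paragraph that the decisive step --- excluding a second zero of $t_{12}$ on $0<A<A(\mcp_r)$ --- is exactly where the argument "stops short of a full proof". The two routes you sketch for that step (a sign-definite first-order ODE for $t_{12}$ in $A$, or an argument-principle count with Phragm\'en--Lindel\"of bounds) are not carried out, and no reason is given why either should succeed; as written, this is a programme, not a proof.

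Beyond the admitted gap there are two concrete errors. First, your reduction conflates two different matrices. By Theorems \ref{thpol2} and \ref{thpol}, a point of $\La_r$ with $A>0$ is a simple intersection precisely when the \emph{Stokes multipliers} satisfy $c_1=0$ and $c_0\neq 0$, whereas a constriction has $c_0=c_1=0$ (Theorem \ref{thec}). So constrictions \emph{are} zeros of the function whose vanishing detects simple intersections. Your claim that "no constriction is a zero of $t_{12}$" appeals to the key lemma quoted in the abstract, but that is Lemma \ref{lkey} about the off-diagonal entries $b,c$ of the \emph{transition matrix} $Q$, a different object; the paper establishes no characterization of simple intersections by the vanishing of $b$ or $c$. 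Hence the zero set you propose to count is not the set of simple intersections, and your "counting problem" is not equivalent to the conjecture. Second, your claim that for $\omega\geq1$ uniqueness is "implicit" in the proof of Conjecture \ref{garl} in \cite{4} is unjustified: the garland structure with all constrictions on $\La_r$ does not prevent the boundary of the lowest component of $L_r^+$ from crossing the axis several times below the lowest constriction, which would produce several simple intersections. This is exactly why the paper derives Conjecture \ref{cunique} only from the strictly stronger Conjecture \ref{coinc} (the equality $L_r^+\cap\La_r=Sr$), and why it states Conjecture \ref{cunique} for all $\omega>0$ rather than only for $\omega<1$.
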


\begin{figure}[ht]
  \begin{center}
   \epsfig{file=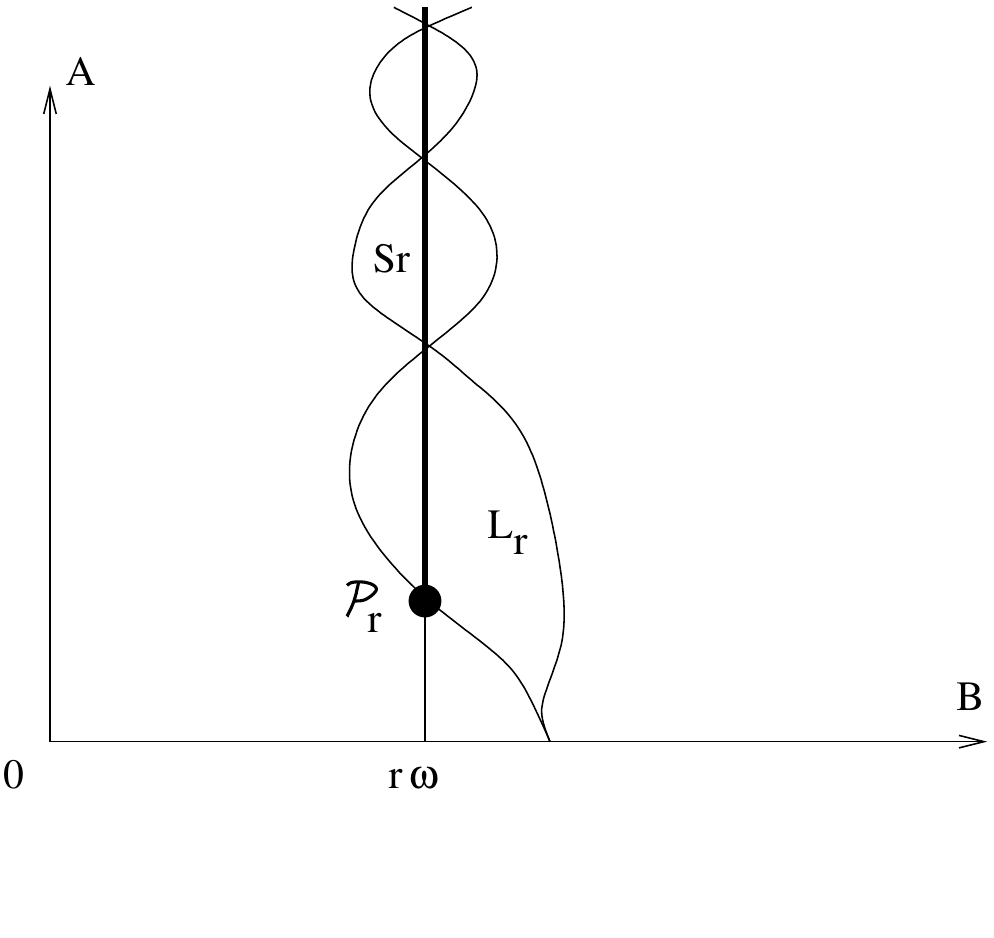}
    \caption{The higher simple intersection $\mcp_r$ and the corresponding ray $Sr$}
  \end{center}
\end{figure} 

\begin{remark} There are no simple intersections for $r=0$, since 
the intersection $\partial L_0\cap\Lambda_0$ consists only of constrictions, see Remark 
\ref{ssylka-4}.  
For every $r\in\zz\setminus\{0\}$ the ordinates of the 
simple intersections in $\Lambda_r$ lying in the upper half-plane
 belong to the  
collection of roots of a known polynomial of degree $|r|$. 
This follows from results of \cite[section 3]{bt0} and \cite[theorem 1.15]{bg2}. This 
together with symmetry implies that 
for every given $r\in\zz\setminus\{0\}$ the number of the corresponding simple intersections is finite. 
\end{remark}

\begin{theorem} \label{intcon} For every $\omega>0$ and every 
$r\in\zz\setminus\{0\}$ the corresponding simple intersections exist and 
do not lie in the $B$-axis; thus $\mcp_r$ and $Sr$  are well-defined. 
 The ray $Sr$ is contained in the phase-lock area $L_r$. 
\end{theorem}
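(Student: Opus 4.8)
The plan is to prove Theorem~\ref{intcon} by a global sign–tracking argument along the axis $\La_r$, taking Theorem~\ref{locon} as the main analytic input. Fix $r\in\zz\setminus\{0\}$ and $\omega>0$ and consider the membership relation $(r\omega,A)\in L_r$ as a function of $A\ge0$. Since $\partial L_r$ consists of two analytic graphs $B=g_{r,\pm}(A)$ (statement~2 of the introduction), its intersection with the analytic line $\La_r$ is discrete, so membership in $L_r$ is locally constant on $\{r\omega\}\times[0,\infty)$ away from the set $\partial L_r\cap\La_r$, which is finite in every bounded interval and, by Definition~\ref{simpint}, is the disjoint union of the constrictions and the simple intersections. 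The crucial observation is that Theorem~\ref{locon} says precisely that membership in $L_r$ does \emph{not} change as one crosses a constriction along $\La_r$: a positive constriction keeps a punctured neighbourhood of the axis inside $L_r$ on both sides, a negative one keeps it outside on both sides. Hence along $\La_r$ the membership value can switch only at a simple intersection.

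First I would pin down the two ends. At the bottom I claim $(r\omega,0)\notin L_r$: on the $B$-axis $A=0$ equation~(\ref{jostor}) is autonomous, $\dot\phi=(B-\sin\phi)/\omega$, and for $|B|>1$ its rotation number equals $\sqrt{B^2-1}/\omega$, so $\rho(B,0)=r$ forces $B=\sqrt{1+r^2\omega^2}>r\omega$ (the growth point, cf.~\cite[corollary~3]{buch1}). Thus $\rho(r\omega,0)\ne r$, the point $(r\omega,0)$ lies in the open exterior of the closed set $L_r$, and every boundary point of $L_r$ on $\La_r$ has $A>0$. At the top I would use the Bessel asymptotics~(\ref{bessas}): at the ordinates where $|J_r(-A/\omega)|$ attains its successive local maxima the half-width of $L_r$ about $\La_r$ is $\sim c|A|^{-1/2}$, which eventually dominates the error $O(\ln|A|/|A|)$, so $g_{r,-}(A)$ and $g_{r,+}(A)$ lie strictly on opposite sides of $r\omega$ there. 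Consequently there exist arbitrarily large $A$ with $(r\omega,A)\in\operatorname{Int}(L_r)$.

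Existence of simple intersections off the $B$-axis now follows: membership switches from ``outside'' at $A=0$ to ``inside'' at some large $A$, and it cannot switch at a constriction, so at least one simple intersection occurs, necessarily with $A>0$. Because the ordinates of the simple intersections in the upper half-plane lie among the roots of a fixed polynomial of degree $|r|$ (the Remark after Conjecture~\ref{cunique}), there are finitely many of them; hence the higher simple intersection $\mpp_r$ and the ray $Sr$ are well defined, with $A(\mpp_r)>0$. Finally, on the open ray above $\mpp_r$ there are, by definition of $\mpp_r$, no simple intersections, so the only points of $\partial L_r\cap\La_r$ on it are constrictions, across which membership is constant by Theorem~\ref{locon}. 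Thus membership is constant on all of $Sr$ minus the constriction points, and since we have exhibited points of $Sr$ in $\operatorname{Int}(L_r)$, that constant value is ``inside''. The interiors of the subintervals between constrictions therefore lie in $\operatorname{Int}(L_r)$ while the constrictions lie in $\partial L_r\subset L_r$, giving $Sr\subset L_r$.

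The genuinely hard content is packaged in Theorem~\ref{locon}, whose proof rests on the key fact that both off-diagonal entries of the transition matrix of the associated double confluent Heun equation are nonzero at each constriction; granting it, the reduction above is essentially topological. Within this reduction the only delicate point is the top anchor: one must verify that the $|A|^{-1/2}$ decay of the Bessel amplitude really beats the $O(\ln|A|/|A|)$ error in~(\ref{bessas}) along a sequence of heights tending to infinity, so that honest interior points of $L_r$ on $Sr$ are produced. The bottom anchor and the discreteness/locally-constant bookkeeping are routine.
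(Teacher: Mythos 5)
Your proof is correct and takes essentially the same route as the paper's: both hinge on Theorem \ref{locon} (membership in $L_r$ cannot switch across a constriction on $\La_r$), on the Bessel asymptotics (\ref{bessas}) to produce interior points of $L_r$ on $\La_r$ at arbitrarily large ordinates (the paper's Proposition \ref{33}), and on the growth point abscissa $\sqrt{1+r^2\omega^2}>r\omega$ to anchor the situation at $A=0$. The paper packages this as a maximal-interval argument (the maximal subinterval of $\La_r\cap L_r$ containing a high interior point is semi-infinite and must be bounded below by a simple intersection), while you track the membership function along the axis; the difference is purely expository.
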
 

\begin{conjecture} \label{cocon} All the constrictions are positive. 
\end{conjecture}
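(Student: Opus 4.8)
The plan is to prove Conjecture \ref{cocon} by carrying through, to its natural conclusion, exactly the reduction announced in the abstract. Write $t_{12},t_{21}$ for the two off-diagonal entries of the transition matrix between the canonical solution bases at the irregular singular points $0$ and $\infty$ of the associated double confluent Heun equation. By Theorem \ref{locon} every constriction is either positive or negative; and, granting the key non-vanishing result of this paper ($t_{12}\neq0$ and $t_{21}\neq0$ at every constriction), this dichotomy is governed precisely by the sign of the real number $t_{12}/t_{21}$. A positive constriction is one at which the two boundary graphs $B=g_{r,-}(A)$ and $B=g_{r,+}(A)$ cross transversally (so the vertical interval through the constriction enters $\operatorname{Int}L_r$ on both sides), and this is the case $t_{12}/t_{21}<0$; a negative constriction is a non-crossing tangential contact, corresponding to $t_{12}/t_{21}>0$. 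Thus the whole conjecture collapses to the single assertion: at each constriction $t_{12}/t_{21}<0$.

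A preliminary ingredient, implicit in speaking of the sign of the ratio at all, is that $t_{12}/t_{21}\in\rr$. I would establish this from the real structure of the complexified system: reality of the field (\ref{josvec}) equips the complexified equation with an anti-holomorphic involution, while the symmetries recorded as properties 5) and 6) lift to holomorphic involutions of the Riemann sphere preserving or exchanging $0$ and $\infty$. At a constriction the monodromy of the linear system degenerates to a scalar $\pm\mathrm{Id}$, so the canonical bases at $0$ and $\infty$ are interchanged in a controlled way by these involutions; tracking their action on the transition matrix yields a relation $\overline{t_{21}}=c\,t_{12}$ with an explicit unimodular $c$, whence $t_{12}/t_{21}=\overline{t_{12}/t_{21}}\in\rr$. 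The care needed here is that the normalization of the canonical bases involves a choice in each Stokes sector at the irregular points, and this choice must be made compatible with the involutions, lest $c$ carry an uncontrolled phase.

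With reality secured, $\operatorname{sign}(t_{12}/t_{21})$ is a continuous, nowhere-vanishing function on the real-analytic locus of constrictions inside the parameter space $\{(B,A,\omega)\}$, hence is locally constant. The strategy is then to show this sign equals $-1$ on each connected component by (i) analysing the connectivity of the constriction locus as $\omega$ and $A$ vary, and (ii) computing the sign in one accessible regime on each component. For the computation I would exploit the Bessel asymptotics (\ref{bessas}): as $A\to+\infty$ the boundary functions separate to leading order like $\pm J_r(-A/\omega)$, and the transition-matrix entries inherit matching asymptotics from which $\operatorname{sign}(t_{12}/t_{21})$ can be read off at the highest constrictions. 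As an independent anchor I would treat the cases $r=\pm1,\pm2$ (Remark \ref{ssylka-4}), where all constrictions already lie in the axis $\La_r$ and the linear equation is most tractable, and attempt to propagate the sign from there along the locus.

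The main obstacle is precisely this last step, and it is the genuine mathematical content of the conjecture. Local constancy reduces everything to one sign per connected component, but the constriction locus is far from obviously connected: as $\omega$ varies, constrictions are created and annihilated, merge in pairs, or descend to the $B$-axis and turn into growth points, and across each such bifurcation the canonical bases — and hence the analytic continuation of $t_{12}/t_{21}$ — must be controlled through the degeneration of the monodromy. Excluding the possibility that the sign flips to $+1$ on some component (equivalently, ruling out negative constrictions) is exactly the open problem to which the abstract reduces matters, and I do not expect it to yield without either a global positivity/monotonicity property of the connection coefficients of the double confluent Heun equation, or a new a priori geometric constraint forbidding tangential (non-crossing) contact between $g_{r,-}$ and $g_{r,+}$.
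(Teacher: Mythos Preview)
This statement is Conjecture~\ref{cocon} in the paper, and the paper does \emph{not} prove it; it is left as an open problem. So there is no ``paper's own proof'' to compare against. What the paper does is exactly the reduction you describe: it shows (Proposition~\ref{cb0}, derived from formulas (\ref{as01})--(\ref{astr})) that a constriction is positive if and only if $\frac{c}{b}>0$ in the notation of (\ref{qform}), and then states the sign condition as a separate open Conjecture~\ref{posit}. The reality of the ratio is established in the paper as Proposition~\ref{reality}, not via the anti-holomorphic involution you sketch but as a direct consequence of (\ref{as01}) and the reality of the Stokes multipliers for $l\in\zz$ (Theorem~\ref{tre}); your route would presumably also work but is more circuitous.

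Your proposal is therefore not wrong so much as it is an honest outline of the paper's own program, with the same gap left unfilled. You correctly isolate the genuine obstruction: local constancy of $\operatorname{sign}(c/b)$ on the constriction locus reduces everything to one computation per connected component, but the global structure of that locus in $(B,A,\omega)$-space --- in particular, whether bifurcations can spawn components carrying the wrong sign --- is not understood. Your suggestion to anchor the sign via the Bessel asymptotics (\ref{bessas}) for large $A$, or via the explicitly controlled cases $r=\pm1,\pm2$, is reasonable, but neither you nor the paper supplies the connectivity argument needed to propagate it. One minor inaccuracy: your geometric description of positive versus negative constrictions as ``transversal crossing'' versus ``tangential contact'' of $g_{r,\pm}$ is not the paper's definition and is not obviously equivalent to it; the paper's dichotomy (Theorem~\ref{locon}) is purely about whether the punctured vertical segment lies inside or outside the phase-lock area, and the local order of contact of the two boundary curves is governed by the exponent $2n$ in (\ref{astr}), which need not be $2$.
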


\begin{conjecture} \label{coinc} The intersection $L_r^+\cap\La_r$ coincides with $Sr$ 
for every $r\in\zz\setminus\{0\}$. 
\end{conjecture} 

\begin{remark} Conjecture \ref{coinc} obviously implies Conjecture \ref{cunique}. 
In Section 4 we show that any of Conjectures \ref{coinc}, \ref{cocon} implies Conjectures \ref{garl} 
and \ref{garl1}, and we will discuss the relations between different conjectures in more details. 
\end{remark}

Theorem \ref{intcon} will be deduced from Theorem \ref{locon} and the result of paper \cite{RK}. 
For the proof of Theorem \ref{locon} we complexify equation (\ref{jos}) and write it in the new complex variables 
$$\Phi=e^{i\phi}, \ z=e^{i\tau}=e^{i\omega t},$$
set
\begin{equation} l=\frac B\omega,  \ \mu=\frac A{2\omega}, \ \lambda=\left(\frac1{2\omega}\right)^2-\mu^2.\label{param}\end{equation}
The complexified equation (\ref{jos}) becomes the Riccati equation 
\begin{equation}\frac{d\Phi}{dz}=z^{-2}((lz+\mu(z^2+1))\Phi-\frac z{2i\omega}(\Phi^2-1)).\label{ric}\end{equation}
The latter is the projectivization of 
the following linear equation on a vector function $(u,v)$, $\Phi=\frac v{u}$:
\begin{equation}\begin{cases} & u'=z^{-2}(-(lz+\mu(1+z^2))u+\frac z{2i\omega}v)\\
& v'=\frac1{2i\omega z}u\end{cases}:\label{tty}\end{equation}
each solution $\Phi(z)$ of equation (\ref{ric}) is a ratio $\frac vu$ of components of a solution of equation 
(\ref{tty}) and vice versa. See \cite[sect. 3.2]{bg}. The above reduction to a linear system  was obtained in slightly different terms in \cite{bkt1, bt1, Foote, IRF}.

After substitution $E(z)=e^{\mu z}v(z)$ system (\ref{tty}) becomes 
equivalent to the following {\it special  double confluent Heun equation:} 
\begin{equation} z^2E''+((l+1)z+\mu(1-z^2))E'+(\lambda-\mu(l+1)z)E=0.\label{heun}\end{equation}
Equation (\ref{heun}) belongs to the well-known class of double confluent Heun 
equations, see \cite[formula (3.1.15)]{sla}. The reduction to equations (\ref{heun}) was 
obtained by V.M.Buchstaber and S.I.Tertychnyi \cite{bt0, bt1, tert, tert2}, who studied 
equations (\ref{heun})  and obtained many important results on their 
polynomial and entire solutions and symmetries in loc. cit. and in \cite{bt2, bt3, bt4}). 
The complete description of equations (\ref{heun}) having entire solutions was 
started in \cite{bt1} and finished in \cite{bg}. The description of their monodromy 
eigenvalues was obtained in the joint paper \cite{bg2} of V.M.Buchstaber and the author 
 as an  explicit analytic transcendental equation relating one monodromy eigenvalue and the parameters. The following theorem was also proved in \cite{bg2}. It concerns the "conjugate" double confluent 
 Heun equations  
 \begin{equation} z^2E''+((-l+1)z+\mu(1-z^2))E'+(\lambda+\mu(l-1)z)E=0\label{heun2}\end{equation}
obtained from (\ref{heun}) by changing sign at the parameter $l$.

 \begin{theorem} \label{thpol2} \cite[theorem 1.15]{bg2}.  Let $\omega>0$, $(B,A)\in\rr^2$, $B,A>0$, $l=\frac B\omega$, $\mu=\frac A{2\omega}$, 
$\la=\frac1{4\omega^2}-\mu^2$, $\rho=\rho(B,A)$. The "conjugate" double confluent Heun equation (\ref{heun2}) corresponding to the above  
$\la$, $\mu$ and $l$ has a polynomial solution, if and only if $l,\rho\in\zz$, $\rho\equiv l(mod 2\zz)$, $0\leq\rho\leq l$, 
the point $(B,A)$ lies in the boundary of a phase-lock area  and is not a constriction. In other terms, 
the points $(B,A)\in\rr_+^2$ corresponding to equations (\ref{heun2}) with polynomial solutions lie in boundaries of 
phase-lock areas and  are exactly those their intersection points with the lines 
$\{ B=m\omega\}$, $m\equiv\rho(mod 2\zz)$, $0\leq\rho\leq m$ 
that are not  constrictions. (For example, the statement of the theorem holds for  simple intersections.) 
\end{theorem}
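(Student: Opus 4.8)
The plan is to prove both implications through the standard dictionary between polynomial solutions of the Heun equation, the monodromy of the complexified linear system, and the rotation number of the real flow.

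First I would reduce the existence of a polynomial solution to a purely algebraic condition. Substituting $E=\sum_{k\geq0}a_kz^k$ into the conjugate equation (\ref{heun2}) and collecting the coefficient of $z^k$ yields the three-term recurrence $(k^2-lk+\la)a_k+\mu(k+1)a_{k+1}+\mu(l-k)a_{k-1}=0$, with $a_{-1}=0$. Examining the top coefficient of a degree-$n$ solution, i.e.\ the equation coming from $z^{n+1}$ with $a_{n+1}=a_{n+2}=0$, forces $\mu(l-n-1)a_n=0$; since $A>0$ gives $\mu\neq0$ and $a_n\neq0$, this yields $l=n+1$. Hence a polynomial solution can exist only when $l\in\nn$, in which case its degree is $n=l-1$ and its coefficients solve the finite homogeneous tridiagonal system given by the equations for $k=0,\dots,l-1$. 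Nontrivial solvability is equivalent to the vanishing of the tridiagonal determinant, a polynomial in $(\la,\mu)$; after the substitution $\la=\frac1{4\omega^2}-\mu^2$ from (\ref{param}) this becomes the announced polynomial in $A$ whose roots carry the candidate points. This already gives the necessity of $l\in\zz$ and produces the ``known polynomial.''

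Next I would pass from the algebra to the dynamics. A polynomial $E$ is single-valued and entire, and the gauge $E=e^{\mu z}v$ together with $u=2i\omega z\,v'$ reconstructs from it a single-valued solution $(u,v)$ of the complexified linear system (\ref{tty}) attached to the parameter $-l$ (the conjugate system). Thus the monodromy operator $\mathcal M$ of that system around $z=0$ has eigenvalue $1$. Computing the residue at $z=0$ of the trace of the coefficient matrix of (\ref{tty}) with $l$ replaced by $-l$ shows $\det\mathcal M=e^{2\pi i l}=1$ for $l\in\zz$, so the second eigenvalue is also $1$: the operator $\mathcal M$ is parabolic (or the identity) and $\operatorname{tr}\mathcal M=2$. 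Since the projectivized monodromy is the complexification of the time-$2\pi$ Poincar\'e map of (\ref{josvec}), a parabolic $\mathcal M$ corresponds to a semistable periodic orbit of the real flow, i.e.\ the point $(B,A)$ lies on the boundary $\partial L_\rho$ of a phase-lock area; by the quantization statement (1) one has $\rho\in\zz$.

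It then remains to pin down $\rho$ and to exclude constrictions. For the value and range of $\rho$ I would invoke the Buchstaber--Tertychnyi correspondence of \cite{bt0} together with the explicit transcendental monodromy-eigenvalue equation of \cite{bg2}: the winding of the eigensolution, constrained by the degree $l-1$ of $E$ and by the symmetries (5) and (6), forces $\rho\equiv l\ (\mathrm{mod}\ 2)$ and $0\leq\rho\leq l$. The heart of the matter, and the step I expect to be the main obstacle, is the dichotomy between simple intersections and constrictions. Both are parabolic boundary points, but I expect them to be separated by the sign of the repeated eigenvalue of $\mathcal M$: a single-valued ($+1$-eigenvalue) solution is exactly what a polynomial $E$ produces, whereas at a constriction the corresponding solution should be anti-periodic (eigenvalue $-1$, $\operatorname{tr}\mathcal M=-2$) and hence cannot be polynomial. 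Establishing this rigorously requires controlling the irregular singular point at $z=\infty$, so as to upgrade a single-valued solution that is entire at $z=0$ to a genuine polynomial, and matching the algebraic count of roots of the determinant polynomial with the geometric count of non-constriction intersection points on each line $\{B=m\omega\}$. Once this correspondence is in place both implications follow, and the case of simple intersections is covered automatically.
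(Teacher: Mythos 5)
First, a caveat on the comparison itself: the paper does not prove Theorem \ref{thpol2} at all --- it is quoted from \cite{bg2} --- so your proposal can only be measured against the ingredients the paper does establish or cite, namely Theorems \ref{thec}, \ref{thpen} and \ref{thpol}. Measured that way, your opening steps are correct: the recurrence, the forced degree $l-1$, and the vanishing tridiagonal determinant are exactly the content of \cite[section 3]{bt0}; and your monodromy argument (single-valued solution $\Rightarrow$ eigenvalue $1$, unit determinant for $l\in\zz$ $\Rightarrow$ repeated eigenvalue $1$ $\Rightarrow$ $|\operatorname{tr}|=2$ $\Rightarrow$ boundary point, $\rho\in\zz$) is sound, modulo one unstated step: the polynomial produces a solution of the system with parameter $-l$, so you must invoke the symmetry $B\mapsto -B$ of the phase-lock portrait (item 5 of the introduction) to transfer the conclusion back to the point $(B,A)$.

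The genuine gap is precisely the step you call ``the heart of the matter'', and the mechanism you propose for it is factually wrong. You expect a constriction to carry an anti-periodic eigensolution (repeated eigenvalue $-1$, $\operatorname{tr}\mathcal M=-2$), so that polynomiality (eigenvalue $+1$) would automatically exclude constrictions. But by Theorem \ref{thec} (statement 3), a constriction is exactly a point where system (\ref{tty}) has \emph{trivial} monodromy: $\mathcal M=Id$, repeated eigenvalue $+1$. Thus constrictions and simple intersections are both eigenvalue-$(+1)$ boundary points; in Stokes data (for $l\in\zz$) they differ as $c_0=c_1=0$ (identity) versus $c_1=0$, $c_0\neq 0$ (genuinely parabolic), and no sign of an eigenvalue can separate them. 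The $\pm1$ dichotomy you have in mind does occur, but it encodes something else: averaging equation (\ref{jostor}) along the distinguished periodic orbit shows that at a point of $\partial L_\rho\cap\{B=l\omega\}$, $l\in\zz$, the repeated eigenvalue equals $(-1)^{l-\rho}$, so eigenvalue $-1$ occurs exactly when $\rho\not\equiv l\ (\mathrm{mod}\ 2)$ --- this is where the parity condition comes from, the condition you only hand-wave. What actually rules out constrictions is Theorem \ref{thpen} (= \cite[theorem 3.10]{bg}): a polynomial solution of (\ref{heun2}) precludes an entire solution of (\ref{heun}); this is a hard result, proved in \cite{bg} via determinants of modified Bessel functions, and it cannot be replaced by a soft eigenvalue argument. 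Finally, the converse implication --- that \emph{every} non-constriction point of $\partial L_\rho\cap\{B=m\omega\}$ with $m\equiv\rho\ (\mathrm{mod}\ 2)$, $0\leq\rho\leq m$, really does yield a polynomial solution --- is absent from your outline; in the paper's language it requires combining Proposition \ref{ineql} (boundary $\Rightarrow$ $c_0c_1\in\{0,-4\}$), the parity statement above to exclude $c_0c_1=-4$, the implication ``$c_0=0$ and $l\in\zz$ $\Rightarrow$ constriction'' (Theorem \ref{thec}, statement 4), and Theorem \ref{thpol}, none of which your proposal reaches; the range restriction $0\leq\rho\leq l$ is likewise left unproved.
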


System (\ref{tty}) is a holomorphic linear differential equation 
on the Riemann sphere with two irregular non-resonant singular points of Poincar\'e rank 1 (pole of order 2) 
at zero and at infinity. The classical Stokes phenomena theory 
 \cite{2, 12, bjl, jlp, sib} yields canonical bases of its
 solutions  at 0 and 
at $\infty$  in two appropriate sectors $S_{\pm}$  containing the punctured closed half-planes 
 $\{\pm\im z\geq0\}\setminus\{0\}$ and not  containing the opposite imaginary semiaxes $i\rr_{\mp}$. 
 In each sector we have two canonical solution bases: one comes from zero, and the other one comes from infinity. 
 The classical Stokes matrices at zero (infinity) compare appropriately normalized sectorial bases at zero (infinity) 
 on components of the intersection $S_+\cap S_-$. It is well-known that the Stokes matrices are 
 triangular and unipotent; their triangular elements $c_0$ and $c_1$ 
 are called the {\it Stokes multipliers.} In Subsection 2.2 we show that  the Stoker 
 multipliers "at zero" are real, whenever $l\in\zz$. 
 
The statement of Theorem \ref{locon} deals with a   constriction $(B_0,A_0)$, set $l=\frac{B_0}{\omega}\in\zz$, 
and says that its appropriate punctured neighborhood $U$ in the line $\{ B=l\omega\}$ either entirely lies in a phase-lock area, or entirely lies outside the union of the 
 phase-lock areas. Inclusion into a phase-lock area is equivalent to the statement that the  
 monodromy operator of system (\ref{tty}), which is unimodular, has trace with modulus no less than 2. 
The trace of monodromy under question equals $2+c_0c_1$, by the classical formula expressing the monodromy 
via the formal monodromy (which is trivial, since $l=\frac{B_0}{\omega}\in\zz$) and the Stokes matrices.  To prove Theorem \ref{locon}, we have to show that 
the product $c_0c_1$ has constant sign on appropriate punctured neighborhood $U$. The proof of this 
statement is based on studying of 
 two appropriately normalized canonical solution bases and the transition matrix comparing them: 
 the base on $S_+$ coming from zero, 
 and the base on $S_-$ coming from infinity. The idea to study a transition matrix between 
 canonical bases at zero and at infinity was suggested by V.M.Buchstaber. 
 
 We compare appropriate solution bases "at zero" on $S_+$ and "at infinity" on $S_-$ 
 that are given by two fundamental solution matrices 
 $W_+(z)$ and $\hat W_-(z)$ respectively. On the positive real semiaxis $\rr_+\subset S_+\cap S_-$ the 
 transition between these bases is defined by a constant matrix $Q$: 
 $$W_+(z)=\hat W_-(z)Q.$$
  We show that one can normalize the bases $W_+$ and $\hat W_-$ under 
 question so that $Q$ be an involution, and we prove  a formula relating the coefficients of the 
 matrix $Q$ and the Stokes multipliers. The key Lemma \ref{lkey} says that the off-diagonal terms of the 
 transition matrix are both non-zero at $(B_0,A_0)$. The latter formula and inequality together will imply that the 
 ratio $\frac{c_1}{c_0}$ is a function analytic and nonvanishing 
 on a neighborhood of the point $(B_0,A_0)$ in the line $\{ B=B_0\}$. This will imply constance of sign of 
 their product $c_0c_1$ on the punctured neighborhood. 
 
In Section 4 we state Conjecture \ref{posit} saying that the ratio of the off-diagonal elements of the transition 
 matrix $Q$ is negative at each constriction $(B_0,A_0)$ with $B_0\geq0$, $A_0>0$. We show that 
 Conjecture \ref{posit} would imply Conjectures \ref{garl} and \ref{garl1} and discuss further relations between different 
 conjectures. 
 
In Section 5 we present additional technical results on the coefficients of the transition matrix $Q$, which will 
be used further on. The main result of Section 5  (Theorem \ref{triq}) 
states that the upper triangular element of the matrix $Q$ 
is purely imaginary, whenever $l\in\zz_{\geq0}$ and $A>0$.

\begin{remark} Very recently Yulia Bibilo suggested a new approach to study 
the model of Josephson effect: to include system (\ref{tty}) 
into a general family of two-dimensional linear systems  with two irregular singularities 
at zero and infinity 
and study  isomonodromic conditions in the general family. Using this method,  
she have shown that an infinite collection of constrictions can be 
described as poles of Bessel solution of Painleve 3 equation with a special choice of parameters \cite{bibilo}. 
\end{remark} 
 
\begin{convention} \label{conv1}
 In what follows, whenever the contrary is not specified, 
 we consider that $l=\frac B{\omega}\geq0$ and $\mu=\frac A{2\omega}>0$:
 it suffices to treat the case of non-negative 
$l$ and positive $\mu$, by symmetry of the portrait of the phase-lock areas. 
\end{convention}

\section{Linearization. Stokes and transition matrices and their relation to the rotation number}

Here we study family (\ref{tty}) of linear systems equivalent to equations (\ref{jos}). 
In Subsection 2.1 we recall what are their formal normal forms, canonical sectorial solution bases  and  Stokes matrices 
and multipliers.   In Subsection 2.2 we show that the Stokes multipliers are real. 
In Subsection 2.3 we recall results on symmetries of equation (\ref{heun}) and existence 
of its polynomial and entire solutions. In Subsection 2.4 we introduce the transition matrix between 
appropriate sectorial bases at zero and at infinity and prove a preliminary formula relating its coefficients to 
the Stokes multipliers. 

\subsection{Preliminaries: canonical solution bases, Stokes and transition matrices}

All the results presented in this subsection are  particular cases of  classical results 
contained in \cite{2, 12, bjl, jlp, sib}. 

\begin{definition} Two germs at 0 of linear systems of type 
$$\dot w=\frac{A(z)}{z^{k+1}}w, \ w\in\cc^n,$$
are {\it analytically (formally) equivalent,} if there exists a linear variable change $w=H(z)\wt w$ with 
$H(z)$ being a holomorphic invertible matrix function (respectively, a formal invertible matrix power series in $z$) that transforms one equation to the other. 
\end{definition}

The germs at  zero and at infinity of linear system (\ref{tty}) are both formally  equivalent to the germ of the diagonal system 
\begin{equation} \begin{cases} & \wt u'=-z^{-2}(lz+\mu(1+z^2))\wt u\\
& \wt v'=0,\end{cases}\label{ttyn}\end{equation}
which will be here called the {\it formal normal form.} The formal normal form has the canonical  base in its 
solution space with the diagonal fundamental matrix 
\begin{equation}
F(z)=\left(\begin{matrix} & z^{-l}e^{\mu(\frac1z-z)}  & 0\\
& 0 & 1 \end{matrix}\right).\label{diagmat}\end{equation}
In general, the formal equivalence is not analytic: the normalizing power series diverges. On the other hand, there 
exist analytic normalizations on appropriate sectors. Namely, let 
  $S_+$, $S_-$ be  sectors on the $z$-axis with vertex at 0 that contain the closed upper (respectively, 
lower) half-plane punctured at 0 so that the closure of the sector  $S_{\pm}$ does not contain the 
opposite imaginary semiaxis $i\rr_{\mp}$, see Fig. 8. In addition we consider that the sector $S_{\pm}$ is 
symmetric to $S_{\mp}$ with respect to the real axis. Set 
$$w=(u,v), \ \wt w=(\wt u,\wt v).$$
There exist and unique  invertible matrix 
functions $H_{\pm}(z)$ that are holomorphic on the sectors $S_{\pm}$ and $C^{\infty}$ on their closures 
$\overline S_{\pm}\subset\oc$ punctured at $\infty$, 
$H_{\pm}(0)=Id$, such that the variable change $w=H_{\pm}(z)\wt w$ transforms system (\ref{tty}) to 
its formal normal form (\ref{ttyn}) (the Sectorial Normalization Theorem). 
\begin{figure}[ht]
  \begin{center}
   \epsfig{file=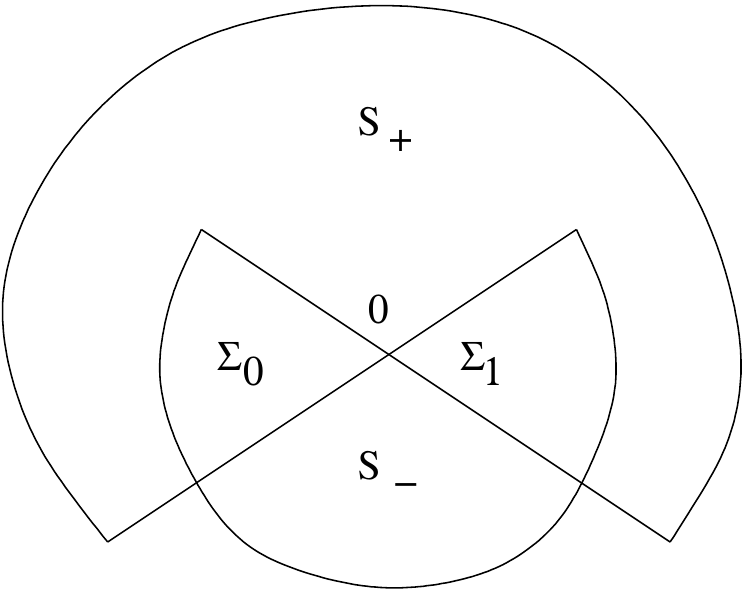}
    \caption{The sectors $S_{\pm}$ defining the Stokes matrices.}
  \end{center}
\end{figure}

The canonical base "at zero" of system (\ref{tty}) in each sector  $S_{\pm}$ is given by the 
fundamental matrix 
\begin{equation}W_{\pm}(z)= H_{\pm}(z)F(z), \ F(z)=
\left(\begin{matrix}  z^{-l}e^{\mu(\frac1z-z)}  & 0\\
 0 & 1 \end{matrix}\right).\label{wpm}\end{equation}
In the definition of the above fundamental matrices we consider that the holomorphic branch of the fundamental 
matrix $F(z)$ of the formal normal form in $S_-$ is obtained from that in $S_+$ by counterclockwise analytic 
extension. We introduce yet another fundamental matrix 
$W_{+,1}(z)=H_+(z)F(z)$ of solutions of 
system (\ref{tty}) on $S_+$ where the holomorphic branch of the matrix $F(z)$ is obtained from that in $S_-$ 
by counterclockwise analytic extension.  
 Let $\Sigma_0$ ($\Sigma_1$) denote the left (respectively, right) component of the intersection 
 $S_{+}\cap S_-$. Over each component $\Sigma_j$ we have two fundamental matrices, 
 $W_{\pm}(z)$ over $\Sigma_0$ and $W_-(z)$, $W_{+,1}(z)$ over $\Sigma_1$ that are  related by a constant 
 transition matrix, the {\it Stokes matrix} $C_j$: 
\begin{equation} W_-(z)=W_+(z)C_0 \text{ on }  \Sigma_0; \ \ W_{+,1}(z)=W_-(z)C_1 \text{ on } \Sigma_1.\label{stokes}\end{equation}

It is well-known that the Stokes matrices are triangular and unipotent:
\begin{equation} C_0=\left(\begin{matrix}  1 & c_0 \\
 0 & 1\end{matrix}\right), \ C_1=\left(\begin{matrix}  1 & 0 \\
 c_1 & 1\end{matrix}\right), \label{stm}\end{equation}
 whenever $\mu>0$, which is our case (if $\mu<0$, then their triangular types are opposite). 
 Their upper (lower) triangular elements  $c_0$, $c_1$ are called the  {\it Stokes multipliers}.
 
Recall that the {\it monodromy operator} 
of a linear differential equation on the Riemann sphere acts on the space of germs of  its solutions at a nonsingular point 
$z_0$. Namely, fix a closed path $\alpha$ starting at $z_0$ in the complement to the singular points of the equation. 
The monodromy operator along the path $\alpha$ sends each germ to the result of its analytic extension along the path $\alpha$. 
It is completely determined by the homotopy class of the path $\alpha$ in the complement to the singular points of the equation. 
Linear system (\ref{tty}) under consideration has  exactly two singular points: zero and 
infinity. By the {\it monodromy operator of system} (\ref{tty}) we mean the monodromy operator along a 
counterclockwise circuit around zero. The monodromy matrix of the formal normal form (\ref{ttyn}) in the canonical 
solution base with a diagonal fundamental matrix is 
\begin{equation}M_N=\diag(e^{-2\pi i l}, 1).\label{mno}\end{equation}
Note that it follows from definition that 
\begin{equation}W_{+,1}(z)=W_+(z)M_N.\label{wnn}\end{equation}
\begin{lemma} \label{lemst} \cite[p. 35]{12}. 
The monodromy matrix of  system (\ref{tty}) in the sectorial canonical solution base $W_+$ equals 
\begin{equation}M=M_NC_1^{-1}C_0^{-1}.\label{monst}\end{equation}
\end{lemma}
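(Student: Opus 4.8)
The plan is to compute the monodromy matrix of system \eqref{tty} in the canonical base $W_+$ by tracking how the fundamental matrix $W_+(z)$ transforms under the counterclockwise circuit around zero, using the Stokes matrices as the bridge between the two sectors. The monodromy matrix $M$ in the base $W_+$ is the constant matrix satisfying $W_+^{\text{cont}}(z)=W_+(z)M$, where $W_+^{\text{cont}}$ denotes the analytic continuation of $W_+$ along a full counterclockwise loop. The key idea is that a full loop around zero can be decomposed: starting on the positive real semiaxis $\Sigma_1$, we continue counterclockwise through the upper half-plane sector $S_+$ to the negative semiaxis $\Sigma_0$, then through the lower half-plane sector $S_-$ back to $\Sigma_1$. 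On each overlap component the Stokes matrices relate the bases coming from the two sectors via \eqref{stokes}, and the formal monodromy $M_N$ in \eqref{mno} accounts for the multivaluedness of $F(z)$.

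First I would set up the bookkeeping on the overlap components $\Sigma_0$ and $\Sigma_1$. On $\Sigma_0$ we have $W_-(z)=W_+(z)C_0$, and on $\Sigma_1$ we have $W_{+,1}(z)=W_-(z)C_1$. Next I would use the crucial identity \eqref{wnn}, namely $W_{+,1}(z)=W_+(z)M_N$, which encodes that $W_{+,1}$ differs from $W_+$ precisely by the branch change of $F(z)$ along one full counterclockwise turn, i.e.\ by the formal monodromy. Combining these, on $\Sigma_1$ I would write $W_+(z)M_N = W_{+,1}(z)=W_-(z)C_1 = W_+(z)C_0 C_1$, but this naive chaining must be done with care regarding on which component each relation is valid and in which direction the continuation proceeds.

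The cleaner way I would proceed is to follow a solution germ around the loop directly. Fix a germ expressed in the base $W_+$ on $\Sigma_1$; continue it counterclockwise into $S_+$ and then to $\Sigma_0$, where it is naturally expressed through $W_-$ via $W_-=W_+C_0$, hence passage to the $W_-$-base multiplies coordinates by $C_0^{-1}$. Continuing further counterclockwise through $S_-$ back to $\Sigma_1$, the base $W_-$ meets $W_{+,1}$ via $W_{+,1}=W_-C_1$, contributing $C_1^{-1}$. Finally, to return to the original $W_+$-base I would invoke \eqref{wnn} in the form $W_+=W_{+,1}M_N^{-1}$, contributing a factor of $M_N$. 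Assembling these factors in the correct order yields $M=M_N C_1^{-1} C_0^{-1}$, matching \eqref{monst}.

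The main obstacle I expect is getting the order of composition and the direction of each transition matrix exactly right, since the $W_\pm$ bases, the auxiliary base $W_{+,1}$, and the formal monodromy $M_N$ all interact through left versus right multiplication by constant matrices, and a transposition or inversion error would give $M_N C_0^{-1}C_1^{-1}$ or a conjugate instead of the stated \eqref{monst}. I would therefore double-check the result by an independent consistency test: taking the determinant of both sides (both Stokes matrices are unipotent, so $\det M = \det M_N = e^{-2\pi i l}$, consistent with the trace-determinant structure), and by verifying against the known closed-form trace $\operatorname{tr} M = 2 + c_0 c_1$ used later in the paper, which follows from \eqref{monst}, \eqref{mno}, \eqref{stm} when $l\in\zz$ so that $M_N=\mathrm{Id}$. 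This cross-check pins down the correct arrangement of factors.
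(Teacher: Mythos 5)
Your proof is correct, but note that the paper itself does not prove Lemma \ref{lemst}: the statement is quoted as a classical fact with the reference \cite[p. 35]{12}, so your derivation is a self-contained supplement rather than a parallel of an argument given in the paper. Your germ-tracking computation is essentially the standard argument behind that citation, and it is carried out with the right conventions: since $W_+$ and $W_-$ are single-valued on the simply connected sectors $S_+$ and $S_-$, a counterclockwise circuit starting on $\Sigma_1$ picks up exactly three constant factors --- $C_0^{-1}$ when the continued base is re-expressed through $W_-$ on $\Sigma_0$ (from $W_-=W_+C_0$), then $C_1^{-1}$ when it is re-expressed through $W_{+,1}$ on $\Sigma_1$ (from $W_{+,1}=W_-C_1$), and finally $M_N$ when one returns to $W_+$ via (\ref{wnn}) --- assembled by right multiplication in that order, giving $W_+^{\mathrm{cont}}=W_+M_NC_1^{-1}C_0^{-1}$, i.e. $M=M_NC_1^{-1}C_0^{-1}$, which is (\ref{monst}). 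You also correctly identified and avoided the one genuine pitfall: the relation $W_-=W_+C_0$ holds only on $\Sigma_0$ and cannot be chained naively with the $\Sigma_1$ relations. Your cross-checks ($\det M=e^{-2\pi i l}$, and $\operatorname{tr} M=2+c_0c_1$ when $l\in\zz$ so that $M_N=Id$) agree with the paper's formula (\ref{wtm}). As for what each route buys: the paper's citation keeps Section 2 brief and flags the statement as standard Stokes theory, while your computation pins down the order-of-factors and side-of-multiplication conventions explicitly --- precisely the conventions that are reused later, e.g. in the proof of Theorem \ref{tconn} through formulas (\ref{m+}) and (\ref{hm-}).
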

 
 \subsection{Reality of the Stokes multipliers on the axes of phase-lock areas}
 \begin{theorem} \label{tre} The Stokes multipliers $c_0$ and $c_1$ of system (\ref{tty}) 
 are real, whenever $l=\frac B{\omega}\in\zz$. 
 \end{theorem}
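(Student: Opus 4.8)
The plan is to exploit an anti-holomorphic symmetry of system (\ref{tty}). Writing $w=(u,v)$, the system reads $w'=z^{-2}A(z)w$ with the symmetric matrix
\begin{equation*}
A(z)=\left(\begin{matrix} -(lz+\mu(1+z^2)) & \frac{z}{2i\omega} \\ \frac{z}{2i\omega} & 0 \end{matrix}\right).
\end{equation*}
Since $l,\mu,\omega$ are real, a direct computation gives $\overline{A(\bar z)}=DA(z)D^{-1}$ with $D=\diag(1,-1)$: complex conjugation reverses the sign of the off-diagonal entries (because $\overline{2i\omega}=-2i\omega$) while fixing the diagonal. Consequently, if $w(z)$ solves (\ref{tty}), then so does $\sigma(z):=D\overline{w(\bar z)}$, since $\frac{d}{dz}\overline{w(\bar z)}=\overline{w'(\bar z)}=z^{-2}\overline{A(\bar z)}\,\overline{w(\bar z)}$ and conjugating by $D$ turns $\overline{A(\bar z)}$ back into $A(z)$ (using $D^2=Id$). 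Thus $w\mapsto D\overline{w(\bar z)}$ is an anti-holomorphic involution of the solution space that sends $S_+$ to $S_-$, the two sectors being mirror-symmetric across $\rr$.

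Next I would transfer this symmetry to the canonical sectorial data, and this is exactly where the hypothesis $l\in\zz$ enters decisively. For integer $l$ the function $z^{-l}$ is single-valued, so $\overline{F(\bar z)}=F(z)$ (the factor $e^{\mu(1/z-z)}$ is conjugation-invariant because $\mu$ is real), and moreover the formal monodromy $M_N=\diag(e^{-2\pi i l},1)$ in (\ref{mno}) equals $Id$, whence $W_{+,1}=W_+$ by (\ref{wnn}). Applying the involution to $W_+=H_+F$ (see (\ref{wpm})) and using that $D$ commutes with the diagonal $F$, I obtain
\begin{equation*}
D\overline{W_+(\bar z)}=\bigl(D\overline{H_+(\bar z)}D^{-1}\bigr)F(z)\,D .
\end{equation*}
The matrix $\hat H(z):=D\overline{H_+(\bar z)}D^{-1}$ is holomorphic on $S_-$, smooth up to the closure punctured at $\infty$, satisfies $\hat H(0)=Id$, and conjugates (\ref{tty}) to its formal normal form (because $\hat H F=D\overline{W_+(\bar z)}D^{-1}$ is a fundamental matrix of (\ref{tty})). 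By the uniqueness clause of the Sectorial Normalization Theorem on $S_-$ this forces $\hat H=H_-$, yielding the clean symmetry relation
\begin{equation*}
D\overline{W_+(\bar z)}=W_-(z)\,D \tag{$\star$}
\end{equation*}
together with its companion $D\overline{W_-(\bar z)}=W_+(z)\,D$, obtained by applying the involution once more.

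Finally I would feed $(\star)$ into the defining relations (\ref{stokes}) of the Stokes matrices. Both components $\Sigma_0$ and $\Sigma_1$ of $S_+\cap S_-$ surround the real axis and are therefore invariant under $z\mapsto\bar z$. Substituting the two forms of $(\star)$ into $W_-=W_+C_0$ on $\Sigma_0$ and into $W_+=W_{+,1}=W_-C_1$ on $\Sigma_1$, then conjugating and replacing $z$ by $\bar z$ (which remains in the same component), I get
\begin{equation*}
\overline{C_0}=DC_0^{-1}D, \qquad \overline{C_1}=DC_1^{-1}D .
\end{equation*}
A one-line matrix computation with the unipotent triangular shapes (\ref{stm}) shows $DC_0^{-1}D=C_0$ and $DC_1^{-1}D=C_1$, so $\overline{C_0}=C_0$ and $\overline{C_1}=C_1$, i.e. $\overline{c_0}=c_0$ and $\overline{c_1}=c_1$. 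Hence both Stokes multipliers are real.

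The main obstacle is the identification $\hat H=H_-$: one must verify that the reflected, $D$-conjugated normalization $D\overline{H_+(\bar z)}D^{-1}$ genuinely meets every hypothesis of the Sectorial Normalization Theorem on $S_-$ (holomorphy on the sector, smoothness up to the punctured closure, value $Id$ at $0$, and the normalizing property), so that the uniqueness statement may be invoked. This is precisely the point at which integrality of $l$ is indispensable: for non-integer $l$ the branch of $z^{-l}$ acquires a phase under $z\mapsto\bar z$ and the formal monodromy $M_N$ is nontrivial, both of which break the identity $\overline{F(\bar z)}=F(z)$ and hence the relation $(\star)$. Everything else is routine bookkeeping.
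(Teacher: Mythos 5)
Your proof is correct, and it is built on the same anti-holomorphic symmetry as the paper's: your involution $w\mapsto D\overline{w(\bar z)}$, $D=\diag(1,-1)$, is (up to an overall sign) the symmetry $J$ of (\ref{ssymm}), and your relation $(\star)$ is precisely the matrix form of the paper's Proposition \ref{pmp} (namely $J_*f_{1\pm}=-f_{1\mp}$ for $l\in\zz$ and $J_*f_{2\pm}=f_{2\mp}$). The genuine difference lies in how this key relation is established and then exploited. The paper proves Proposition \ref{pmp} by asymptotics: each canonical column is characterized as the subdominant solution along $\rr_-$ (resp. $\rr_+$), unique up to a scalar, and the scalars $-1$ and $1$ are then pinned down from the normalization $H_\pm(0)=Id$. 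You obtain $(\star)$ instead from the uniqueness clause of the Sectorial Normalization Theorem, after checking that $D\overline{H_+(\bar z)}D^{-1}$ meets every hypothesis characterizing $H_-$; your verification of those hypotheses (holomorphy on $S_-$, smoothness on the punctured closure, value $Id$ at $0$, the normalizing property via $\hat H F$ being a fundamental matrix of (\ref{tty}), and single-valuedness of $F$ for $l\in\zz$) is complete, and the uniqueness you invoke is indeed part of the theorem as stated in Subsection 2.1, so the step you flagged as the main obstacle is sound. The concluding steps also differ: the paper does componentwise bookkeeping on $\rr_\pm$ (reality/imaginarity of the entries $f_{ij,\pm}$, then $c_0f_{11,+}=-2\re f_{12,+}\in\rr$ and its analogue for $c_1$), whereas you conjugate the Stokes relations (\ref{stokes}) as whole matrices to get $\overline{C_j}=DC_j^{-1}D$ and finish with the one-line observation that conjugation by $D$ inverts unipotent triangular matrices of the shapes (\ref{stm}). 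Your route is shorter and makes completely transparent where $l\in\zz$ enters (single-valuedness of $F$ and triviality of $M_N$); the paper's route produces along the way the pointwise relations (\ref{conj})--(\ref{f12-}) on $\rr_\pm$, which are not mere byproducts: they are reused later in the proof of Theorem \ref{triq} to show that the coefficient $b$ is purely imaginary.
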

 \begin{proof} System (\ref{tty}) admits the symmetry 
 \begin{equation}J: (u,v)(z)\mapsto (\hat u,\hat v)(z)= (-\bar u,\bar v)(\bar z),\label{ssymm}\end{equation}
 which induces  $\cc$-antilinear automorphisms of the spaces of germs of its solutions on $\rr_{\pm}$ or equivalently, 
   on $\Sigma_{0,1}$.  Indeed, the right-hand side of system (\ref{tty}) is the vector $(u,v)$ multiplied by 
   $z^{-2}$ times an entire matrix function. The   diagonal terms  of the latter matrix function 
   are real polynomials, while the off-diagonal terms are equal to $\frac z{2i\omega}$. 
   This implies that $J$ is a symmetry of system (\ref{tty}). 
 For every vector function $f=(f_1,f_2)(z)$ by $J_*f$ we will denote its image under the transformation $J$. 
 Note that a vector function $f$ holomorphic on a sector $S_{\pm}$ is transformed to the function $J_*f$ holomorphic 
 on $S_{\mp}$, since the complex conjugation permutes the sectors $S_{\pm}$. 
 Let $f_{j\pm}=(f_{1j,\pm},f_{2j,\pm})$, $j=1,2$, 
 denote the canonical basic sectorial solutions forming the fundamental matrices 
 $W_{\pm}=(f_{ij,\pm}(z))$ in the sectors $S_{\pm}$. 
   \def\det{\operatorname{det}}
\begin{proposition} \label{pmp} One has  $J_*f_{1\pm}=-f_{1\mp}$, whenever $l\in\zz$, and  $J_*f_{2\pm}=f_{2\mp}$ 
in the general case.
\end{proposition}
\begin{proof} Note that $|f_{1\pm}(z)|=o(|f_{2\pm}(z)|)$, as $z\to0$ along  $\rr_-$, by (\ref{wpm}) and since $z^{-l}e^{\frac{\mu}z}\to0$; recall that $\mu>0$, see Convention \ref{conv1}. Therefore, 
$|J_*f_{1+}(z)|=|f_{1+}(\bar z)|=o(|f_{2\pm}(\bar z)|)$, by  (\ref{ssymm}). Thus, both $J_*f_{1+}$ and $f_{1-}$ are solutions 
of system (\ref{tty}) with the least asymptotics, as $z\to0$ along $\rr_-$. 
It is classical that a solution of (\ref{tty}) 
with the least asymptotics  is unique up to constant factor (in particular, $f_{1+}=f_{1-}$ 
on $\rr_-$). In our case this follows by the above asymptotic formula, since the solution 
space is two-dimensional. Therefore, 
$J_*f_{1+}=a_1f_{1-}$, $a_1=const\neq0$. Similarly we get 
that $f_{2+}$ is the solution with the least asymptotics on $\rr_+$, and $J_*f_{2+}=a_2f_{2-}$. 
Let us show that $a_1=-1$ and $a_2=1$. 

The vector function $f_{1\pm}$ is the product of the scalar function $z^{-l}e^{\mu(\frac1z-z)}$ and the first column of the 
matrix function $H_{\pm}(z)$, see (\ref{wpm}). Recall that $H_{\pm}(z)$ is $C^{\infty}$-smooth on $\overline S_{\pm}\setminus\{\infty\}$ and $H_{\pm}(0)=Id$; thus 
the upper element of the latter column tends to 1 and dominates the lower one. Hence, 
$f_{11,\pm}(z)\simeq z^{-l}e^{\mu(\frac1z-z)}$, $f_{21,\pm}(z)=o(f_{11,\pm}(z))$, as $z\to0$ along $\rr_-$. 
If $l\in\zz$, then the first component of the vector 
function $J_*f_{1+}$ equals $-\overline{f_{11,+}(\bar z)}\simeq-z^{-l}e^{\mu(\frac1z-z)}\simeq-f_{11,-}(z)$. Therefore, 
$a_1=-1$. The vector function  
$f_{2\pm}(z)$ equals the second column of the matrix function  $H_{\pm}(z)$,  by (\ref{wpm}). The second 
(lower) component of the latter column tends to 1, as $z\to0$ along $\rr_+$, since $H_{\pm}(0)=Id$. 
The transformation $J$ acts on the second component of a vector function by conjugation of the image and 
the preimage. The three latter statements toghether imply that the second components of the vector functions 
$f_{2-}(z)$ and $J_*f_{2+}(z)$ are asymptotic to each other, as $z\to0$ along $\rr_+$, and hence, $a_2=1$. 
This proves Proposition \ref{pmp}.
\end{proof}

 For every $z\in\rr_-$ one has $f_{1+}(z)=f_{1-}(z)$ (unipotence and upper triangularity of the Stokes matrix $C_0$), hence, 
 $$-\overline{f_{11,\pm}( z)}=-f_{11,\mp}(z)=-f_{11,\pm}(z), \ \overline{f_{21,\pm}(z)}=-f_{21,\mp}(z)=-f_{21,\pm}(z),$$
 by Proposition \ref{pmp}. Therefore,
 $$f_{11,\pm}(z)=f_{11,\mp}(z)\in\rr, \ f_{21,\pm}(z)=f_{21,\mp}(z)\in i\rr  \text{ for }  z\in\rr_-.$$
 Similarly we get that 
\begin{equation} 
f_{12,\pm}(z)=f_{12,\mp}(z)\in i \rr, \ f_{22,\pm}(z)=f_{22,\mp}(z)\in \rr  
\text{ for }  z\in\rr_+,
\label{conj}\end{equation}
\begin{equation} f_{12,-}(z)=-\overline{f_{12,+}(z)}, \ f_{21,+}(z)=-\overline{f_{21,-}(z)} 
\text{ for } z\in\rr,\label{f12-}\end{equation}
by Proposition \ref{pmp}. One has 
$$f_{12,-}(z)=f_{12,+}(z)+c_0f_{11,+}(z)  \text{ for } z\in\rr_-,$$
$$  f_{21,+}(z)=f_{21,-}(z)+c_1f_{22,-}(z)\text{ for } z\in\rr_+,$$
by definition, (\ref{stokes}), (\ref{stm}), (\ref{wnn}) and since in our case, when $l\in\zz$, one has $M_N=Id$ and 
$W_{+,1}=W_+$. Substituting (\ref{f12-}) to the 
latter formulas one gets 
$$c_0f_{11,+}(z)=-2\re f_{12,+}(z)\in\rr \text{ for } z\in\rr_-,$$
$$ c_1f_{22,-}(z)=2\re f_{21,+}(z)\in\rr \text{ for } z\in\rr_+.$$
This together with the reality of the values $f_{11,+}(z)$, $f_{22,-}(z)$ for $z\in\rr_-$ and $z\in\rr_+$ respectively 
and the fact that they do not vanish identically (being the dominant components of non-identically vanishing 
vector functions) implies that $c_0,c_1\in\rr$. Theorem \ref{tre} is proved.
\end{proof}

 \subsection{Symmetries of system (\ref{tty}) and solutions of the special double confluent Heun  equation}
 V.M.Buchstaber and S.I.Tertychnyi have constructed symmetries of double confluent Heun equation (\ref{heun}) \cite{bt1, bt3, tert2}, see also \cite{bt4}. 
The symmetry $\#:E(z)\mapsto 2\omega z^{-l-1}(E'(z^{-1})-\mu E(z^{-1}))$, which is an involution of its solution space, was constructed 
in \cite[equations (32), (34)]{tert2}. It can be obtained from the symmetry $(\phi,t)\mapsto(\pi -\phi,-t)$ of the nonlinear equation (\ref{jos}); the latter symmetry was found in \cite{RK}. The symmetry $\#$ is equivalent to  the symmetry  
 \begin{equation} \bbi: (u,v)(z)\ \mapsto \  (\hat u,\hat v)(z)=-iz^{-l}e^{\mu(\frac 1z- z)}(-v(z^{-1}),u(z^{-1})) 
\label{diez}\end{equation}
of system (\ref{tty}). The transformation 
\begin{equation}\Diamond:E(z)\mapsto e^{\mu(z+z^{-1})}E(-z^{-1})\label{diamond}
\end{equation}
induces an isomorphism of the solution space of the special double confluent Heun equation (\ref{heun})  and that of the "conjugate" double confluent Heun 
 equation (\ref{heun2}). 
Recall that we consider that $l\geq0$. 

As it was shown in \cite{bt1}, equation (\ref{heun}) cannot have polynomial solutions, while (\ref{heun2}) can have them 
only for $l\in\nn$. 
In \cite{bt3} Buchstaber and Tertychnyi  have found new nontrivial symmetries of equation (\ref{heun}) in the case, 
when $l\in\zz_{\geq0}$ and equation (\ref{heun2}) does not have polynomial solutions. 

The following theorem is a direct consequence of results of papers \cite{bt1, 4}. 

\begin{theorem} \label{thec} 
A point $(B,A)\in\rr_{\geq0}\times\rr_+$ is a constriction, if and only if  one of  the following 
equivalent statements holds. 

1) $l=\frac B{\omega}\in\zz$ and system (\ref{tty}) is analytically equivalent at 0 to its formal normal form (\ref{ttyn}). 

2) $l\in\zz$ and $c_0=c_1=0$.

3) System (\ref{tty}) has trivial monodromy. 

4) $l\in\zz$ and $c_0=0$. 

5) $l\in\zz$ and equation (\ref{heun}) has an entire solution. 
\end{theorem}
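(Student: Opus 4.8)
The plan is to fix $l=\frac B\omega$, reduce all five conditions to the case $l\in\zz$ (so that $M_N=Id$), prove the equivalences among (1)--(5) by a short chain of implications, and only then import the bridge to the geometric notion of constriction. First I would dispose of the integrality of $l$. A constriction has $l\in\zz$ by Remark \ref{ssylka-4}; conditions (1), (2), (4), (5) posit $l\in\zz$ outright; and (3) forces $l\in\zz$, since by Lemma \ref{lemst} and the unipotence (\ref{stm}) of the Stokes matrices one has $\det M=\det M_N=e^{-2\pi il}$, so trivial monodromy yields $e^{-2\pi il}=1$. Thus in every case $l\in\zz$ and $M_N=Id$ by (\ref{mno}), which I assume henceforth.

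Next I would settle the linear-algebraic core $(1)\Leftrightarrow(2)\Leftrightarrow(3)$. For $(2)\Leftrightarrow(3)$, substituting $M_N=Id$ into (\ref{monst}) and using (\ref{stm}) gives
\[
M=C_1^{-1}C_0^{-1}=\begin{pmatrix}1 & -c_0\\ -c_1 & 1+c_0c_1\end{pmatrix},
\]
so $M=Id$ exactly when $c_0=c_1=0$. For $(1)\Leftrightarrow(2)$ I would invoke the classical Stokes theory \cite{2,12,bjl,jlp,sib}: relative to the formal normal form (\ref{ttyn}), the pair of Stokes matrices is a complete analytic invariant of (\ref{tty}), so analytic equivalence to (\ref{ttyn}) is the same as $C_0=C_1=Id$, i.e. $c_0=c_1=0$. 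This yields $(1)\Leftrightarrow(2)\Leftrightarrow(3)$ and, trivially, $(2)\Rightarrow(4)$.

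It remains to bring in (4) and (5). For $(4)\Leftrightarrow(5)$ I would read off the Stokes gluing directly. Writing $W_\pm=(f_{1\pm}\,|\,f_{2\pm})$ and unwinding (\ref{stokes}) with $M_N=Id$ gives $f_{2-}=c_0f_{1+}+f_{2+}$ on $\Sigma_0$ and $f_{2-}=f_{2+}$ on $\Sigma_1$; since $f_{2\pm}$ stays bounded at $0$ (because $H_\pm(0)=Id$ in (\ref{wpm})), while the other canonical solution grows like $z^{-l}e^{\mu(\frac1z-z)}$, the bounded solution $f_2$ is single-valued, hence holomorphic, at $0$ precisely when $c_0=0$. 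As $z=0$ is the only finite singular point of (\ref{heun}), holomorphy at $0$ means entirety, and via $E=e^{\mu z}v$ this bounded solution is exactly the entire solution of (\ref{heun}); this proves $(4)\Leftrightarrow(5)$. The remaining and genuinely hard implication is $(4)\Rightarrow(2)$, i.e. $c_0=0\Rightarrow c_1=0$. This is where I would import \cite{bt1, bt3}: when $c_0=0$ the entire solution $f_2$ is fixed by the monodromy $M$, and the nontrivial symmetries of (\ref{heun}) constructed there (valid exactly in the regime $l\in\zz_{\geq0}$ with (\ref{heun2}) having no polynomial solution) furnish a second, independent monodromy-invariant solution, forcing $M=Id$ and hence $c_1=0$. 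Note that the hypothesis must be placed on $c_0$ and not on $c_1$: the converse implication is false, since at a simple intersection the monodromy is parabolic but nontrivial, so exactly one of $c_0,c_1$ vanishes there.

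Finally I would connect these analytic conditions to the dynamics by quoting \cite{4}: for $A>0$, a boundary point of a phase-lock area is a constriction exactly when the monodromy of (\ref{tty}) is trivial, i.e. when (3) holds. Together with the chain above this identifies ``constriction'' with each of (1)--(5). I expect the main obstacle to be twofold: the implication $(4)\Rightarrow(2)$, which is not formal and rests on the hidden symmetries of the double confluent Heun equation from \cite{bt1, bt3}; and the geometric equivalence ``constriction $\Leftrightarrow$ trivial monodromy'', which rests on the real-dynamical analysis of \cite{4, RK} relating the rotation number, through the trace $2+c_0c_1$ of the monodromy, to the pinching of the phase-lock domains.
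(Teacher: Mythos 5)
Most of your chain is correct, and it is more self-contained than the paper's own proof: the determinant argument forcing $l\in\zz$ under (3), the computation
\[
M=C_1^{-1}C_0^{-1}=\begin{pmatrix}1 & -c_0\\ -c_1 & 1+c_0c_1\end{pmatrix}
\]
giving $(2)\Leftrightarrow(3)$, the completeness of the Stokes data for $(1)\Leftrightarrow(2)$, and the gluing argument for $(4)\Leftrightarrow(5)$ (which is essentially the paper's own proof of $(4)\Rightarrow(5)$) are all fine, as is your closing bridge ``constriction $\Leftrightarrow$ trivial monodromy'' via \cite{4}. The paper instead quotes \cite[proposition 3.2, lemma 3.3]{4} for the equivalence of (1)--(3) with being a constriction and \cite[theorems 3.3, 3.5]{bt1} for statement (5), so the hard implication $(4)\Rightarrow(2)$ comes to it for free along the route $(4)\Rightarrow(5)\Rightarrow\text{constriction}\Rightarrow(2)$; you try to prove $(4)\Rightarrow(2)$ intrinsically, by a symmetry argument, and this is where there is a genuine gap.

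Your assertion that the symmetries of \cite{bt1, bt3} ``furnish a second, independent monodromy-invariant solution'' begs precisely the crucial point, namely independence. When $c_0=0$ and $c_1\neq 0$ the monodromy is a nontrivial unipotent map, its invariant vectors form exactly the line spanned by the entire solution $E$, and an automorphism of the solution space that normalizes the monodromy (the involution $\#$, coming from $z\mapsto z^{-1}$, conjugates $M$ to $M^{-1}$ and hence maps invariant solutions to invariant solutions) may perfectly well preserve that line; so a priori $\# E=cE$, and until this case is excluded the symmetry argument proves nothing. The exclusion is analytic, not algebraic: if $\# E=cE$, then, since $\# E(z)=2\omega z^{-l-1}\bigl(E'(z^{-1})-\mu E(z^{-1})\bigr)\to 0$ as $z\to\infty$ (here $l\geq 0$ and $E$ is holomorphic at $0$), the function $E$ would be an entire function tending to $0$ at infinity, hence $E\equiv 0$ by Liouville's theorem --- a contradiction. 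This Liouville-plus-$\#$ step is exactly the mechanism of Case 1 of the paper's key Lemma \ref{lkey} (used there in a different context), and it is what your sketch is missing; without it the implication $(4)\Rightarrow(2)$, and with it the equivalence of (4) and (5) with the other statements, remains unproved. Two smaller omissions: to invoke the \cite{bt3} symmetries at all you must verify their hypothesis that equation (\ref{heun2}) has no polynomial solution, which under (4) follows from your own $(4)\Rightarrow(5)$ combined with Theorem \ref{thpen} (you state the hypothesis but never check it); and your quotation of \cite{4} concerns boundary points of phase-lock areas, so you still need the standard monotonicity remark that trivial monodromy forces $(B,A)$ to lie in the boundary of a phase-lock area before that quotation applies.
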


\begin{proof} Each one of the statements 1), 2),  3) is equivalent to the statement that $(B,A)$ is a constriction, 
see \cite[proposition 3.2, lemma 3.3]{4}. Its equivalence with  
statement 5) was proved in \cite[theorems 3.3, 3.5]{bt1}. It is clear that statement 2) implies 4). 
Let us  prove the converse: statement 4) implies statement 5), which is equivalent to 2). Condition $c_0=0$ (i.e., 
$C_0=Id$) is equivalent to the statement that the second canonical basic solutions $H_{\pm}(z)\left(\begin{matrix} 0\\ 1
\end{matrix}\right)$ of system (\ref{tty}) coming from different sectors paste together and form an entire vector solution 
$(u(z),v(z))$, $v(z)\not\equiv0$, $u(z)=2i\omega zv'(z)$. 
Hence, in this case equation (\ref{heun}) has entire solution $E(z)=e^{\mu z}v(z)$, thus statement 5) holds.
Theorem \ref{thec} is proved.
\end{proof}

\begin{theorem} \label{thpen} \cite[theorem 3.10]{bg}. Let $l\in\nn$, $B=\omega l$, $A>0$. Let equation (\ref{heun2}) 
have a polynomial solution. Then equation (\ref{heun}) has no entire solution; thus $(B,A)$ is not a constriction.
\end{theorem}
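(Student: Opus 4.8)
The plan is to prove the contrapositive: assuming that equation (\ref{heun}) has an entire solution $E_1$, I will show that the conjugate equation (\ref{heun2}) cannot have a polynomial solution. The main tool is the symmetry $\Diamond$ of (\ref{diamond}), which by the discussion preceding the theorem maps the solution space of (\ref{heun}) isomorphically onto that of (\ref{heun2}) and involves the substitution $z\mapsto -z^{-1}$ interchanging the two irregular singular points $0$ and $\infty$. I would track solutions through $\Diamond$ by the classical recessive/dominant dichotomy at each singular point, i.e. by the presence or absence of the exponential factors $e^{\mu/z}$ at $0$ and $e^{\mu z}$ at $\infty$, using that at each Poincar\'e rank $1$ irregular point the recessive solution is unique up to a scalar, see \cite{2, 12, bjl, jlp, sib}.

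First I would record that $E_1$, being entire, is recessive at $0$ (it carries no factor $e^{\mu/z}$), and that it is dominant at $\infty$, namely $E_1\sim \mathrm{const}\cdot z^{c}e^{\mu z}$. The latter holds because equation (\ref{heun}) has no polynomial solution, a fact recalled above from \cite{bt1} and verifiable in one line from the coefficient recursion, where the top relation forces the leading coefficient to vanish: an entire solution that were also recessive at $\infty$ would be polynomially bounded and hence a polynomial. Next I apply $\Diamond$ and examine the exponential factors of $\Diamond E_1(z)=e^{\mu(z+z^{-1})}E_1(-z^{-1})$. Near $z=0$ the factor $e^{\mu/z}$ of the prefactor cancels the factor $e^{-\mu/z}$ produced by $E_1(-z^{-1})$ through its dominant behaviour at $\infty$, so $\Diamond E_1$ carries no $e^{\mu/z}$ and is recessive at $0$; near $z=\infty$ the prefactor contributes $e^{\mu z}$ while $E_1(-z^{-1})$ contributes no exponential (as $E_1$ is recessive at $0$), so $\Diamond E_1$ is dominant at $\infty$. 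Thus $\Diamond E_1$ is, up to a scalar, the unique recessive-at-$0$ solution of (\ref{heun2}), and it is dominant at $\infty$.

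To finish, suppose (\ref{heun2}) had a polynomial solution $P$. Then $P$ is holomorphic at $0$, hence recessive at $0$, so by uniqueness of the recessive direction $P$ is proportional to $\Diamond E_1$ and therefore dominant at $\infty$; this contradicts $P$ being polynomially bounded. Hence (\ref{heun2}) has no polynomial solution, which is the contrapositive. The concluding clause then follows from Theorem \ref{thec}: since $l\in\nn\subset\zz$ and (\ref{heun}) has no entire solution, $(B,A)$ is not a constriction. Alternatively one may read the statement off from Theorems \ref{thpol2} and \ref{thec}, since a polynomial solution of (\ref{heun2}) already forces $(B,A)$ to be a non-constriction boundary point, whence (\ref{heun}) has no entire solution; but this route is logically heavier. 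The main obstacle is making the recessive/dominant bookkeeping rigorous, because the asymptotics are only sectorial: one must argue, as in the proof of Theorem \ref{tre}, that the cancellation of the $e^{\mu/z}$ factor is a genuine analytic property of the solution $\Diamond E_1$ on the relevant rays, and invoke the one-dimensionality of the recessive subspace rather than a naive comparison of formal series.
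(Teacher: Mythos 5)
Your proposal attempts something the paper itself does not do: Theorem \ref{thpen} is stated here with no proof, being imported from \cite[theorem 3.10]{bg}, where it is established by explicit identities for the Bessel-type determinants controlling entire solutions of (\ref{heun}) and polynomial solutions of (\ref{heun2}); indeed, inside this paper the logic runs opposite to yours (Case 2 of Lemma \ref{lkey} and Theorem \ref{thpol} are \emph{deduced from} Theorem \ref{thpen}). Your attempt must therefore stand on its own, and it has a genuine gap at the one step carrying all the weight: the claim that $\Diamond E_1$ is recessive at $0$. The first half of your argument is sound (an entire solution must be dominant at infinity along $\rr_+$, since recessive asymptotics do persist across Stokes rays and all solutions are bounded in the left half-plane). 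But dominant asymptotics, unlike recessive ones, are purely sectorial -- this is exactly the Stokes phenomenon -- so what you actually know is $E_1(w)\sim c\,e^{\mu w}$ only as $w\to\infty$ in the half-plane $\{\re w>0\}$. Recessiveness of $\Diamond E_1$ at $0$ must be tested as $z\to 0$ along $\rr_+$, because that is where $e^{\mu/z}$ blows up; and for $z\to 0^+$ the point $w=-z^{-1}$ tends to $-\infty$, i.e.\ into the \emph{opposite} half-plane, where $e^{\mu w}$ is exponentially small and the behaviour of $E_1$ is governed by its component along the power-like solution $\sim w^{-(l+1)}$. If that component is nonzero, then $E_1(-z^{-1})\sim c'z^{\,l+1}$, so $\Diamond E_1(z)\sim c'z^{\,l+1}e^{\mu/z}e^{\mu z}$ is \emph{dominant} at $0$, the polynomial solution $P$ is proportional to the other basis direction, and no contradiction arises. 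The exponential cancellation you describe does occur, but only as $z\to 0^-$, where it proves nothing, since every solution of (\ref{heun2}) is bounded on that ray.

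The missing statement can be made precise in the paper's own terms. Expanding the entire solution $f_2$ in the basis $\hat W_-$ at infinity, its coefficient along the power-like column $g_2$ is exactly the diagonal entry $q_{22}=a$ of the transition matrix (\ref{qform}); your bookkeeping is thus the assertion that $a=0$ at every constriction. Nothing in the sectorial theory gives this: at a constriction $c_0=c_1=0$, so relations (\ref{quad}) degenerate to $a^2=bc+1$ with $b,c\neq 0$ (Lemma \ref{lkey}), leaving $a$ unconstrained. Worse, $a=0$ would force $bc=-1$, and since $b,c\in i\rr$ at constrictions (Theorem \ref{triq}) this gives $c/b=1/|b|^2>0$, i.e.\ Conjecture \ref{posit}, hence Conjectures \ref{garl} and \ref{garl1}: the step you take for granted is at least as strong as the main open conjectures of this paper. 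This also indicates why no purely soft recessive/dominant argument can settle the theorem: the coexistence of an entire solution of (\ref{heun}) with a polynomial solution of (\ref{heun2}) is compatible with all the sectorial and monodromy data (such a configuration simply has $a\neq 0$), and ruling it out is the quantitative content of \cite[theorem 3.10]{bg}. Finally, your fallback via Theorem \ref{thpol2} is not a repair: within this paper it merely replaces one external citation by another, and in the literature the proof of \cite[theorem 1.15]{bg2} itself rests on \cite[theorem 3.10]{bg}, so that route is circular rather than ``logically heavier''.
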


\begin{theorem} \label{thpol} Equation (\ref{heun2}) has a nontrivial polynomial solution, if and only if $l\in\nn$, $c_1=0$ 
and $c_0\neq0$. 
\end{theorem}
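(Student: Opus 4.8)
The plan is to realize a polynomial solution of (\ref{heun2}) as a single-valued solution of system (\ref{tty}) that is recessive at the origin, and to read off the vanishing of $c_1$ from the resulting coincidence of the first canonical columns $f_{1+}$ and $f_{1-}$. The tools are the isomorphism $\Diamond$ of (\ref{diamond}), which sends solutions of (\ref{heun2}) to solutions of (\ref{heun}) and satisfies $\Diamond^2=Id$; the substitution $E=e^{\mu z}v$ linking (\ref{heun}) to (\ref{tty}); and the symmetry $\bbi$ of (\ref{diez}). The necessity of $l\in\nn$ is exactly the result of \cite{bt1} quoted above, so in both directions I may assume $l\in\nn$, whence $M_N=Id$ and $W_{+,1}=W_+$.

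For the forward implication, suppose (\ref{heun2}) has a nontrivial polynomial solution $P$. Set $E=\Diamond P$, a solution of (\ref{heun}), and $v=e^{-\mu z}E=e^{\mu/z}P(-z^{-1})$, $u=2i\omega z v'$, which produces a solution $g=(u,v)$ of (\ref{tty}) that is single-valued on $\cc^{*}$ since $e^{\mu/z}$ and $P(-z^{-1})$ are. As $z\to0$ within $\{\re z<0\}$ the factor $e^{\mu/z}$ decays exponentially and dominates the polynomial $P(-z^{-1})$, so $g\to0$ there; hence $g$ is recessive at $0$ in those directions. Taking a direction in the second (resp. third) quadrant, which lies in $S_{+}$ (resp. $S_{-}$), the vanishing of the $f_{2\pm}$-coefficient forces $g=\alpha_{+}f_{1+}$ on $S_{+}$ and $g=\alpha_{-}f_{1-}$ on $S_{-}$, with $\alpha_{\pm}\neq0$. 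These agree on $\Sigma_{0}=\rr_{-}$, where $f_{1+}=f_{1-}$, giving $\alpha_{+}=\alpha_{-}$; single-valuedness of $g$ on $\Sigma_{1}=\rr_{+}$ then yields $f_{1+}=f_{1-}$ there, and since $f_{1+}=f_{1-}+c_{1}f_{2-}$ with $f_{2-}\not\equiv0$ we conclude $c_{1}=0$. Finally, by Theorem \ref{thpen} equation (\ref{heun}) has no entire solution, so $(B,A)$ is not a constriction and, by Theorem \ref{thec}, $c_{0}=c_{1}=0$ is excluded; hence $c_{0}\neq0$.

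For the converse, assume $l\in\nn$, $c_{1}=0$ and $c_{0}\neq0$. Then $f_{1+}=f_{1-}$ on both $\Sigma_{0}$ and $\Sigma_{1}$, so $f_{1}:=f_{1\pm}=(f_{11},f_{21})$ is a single-valued solution of (\ref{tty}) on $\cc^{*}$ with $f_{21}\not\equiv0$. Since $c_{1}=0$ the monodromy (\ref{monst}) equals $C_{0}^{-1}$, which by (\ref{stm}) is unipotent and, as $c_{0}\neq0$, nontrivial; its fixed space is therefore one-dimensional, so the single-valued solutions of (\ref{tty}) form the line $\cc f_{1}$. Now $\bbi f_{1}$ is again single-valued (for $l\in\zz$ the factor $z^{-l}e^{\mu(1/z-z)}$ in (\ref{diez}) is single-valued on $\cc^{*}$), hence $\bbi f_{1}=\gamma f_{1}$ with $\gamma\neq0$; a direct asymptotic computation from (\ref{diez}) using $f_{11}(w)\sim w^{-l}e^{\mu(1/w-w)}$ and $f_{21}(w)=o(f_{11}(w))$ as $w\to0$ shows that $\bbi f_{1}$ is bounded at $\infty$, and therefore so is $f_{1}$. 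I then put $E_{2}(z)=\Diamond(e^{\mu z}f_{21})(z)=e^{\mu z}f_{21}(-z^{-1})$, a solution of (\ref{heun2}). Boundedness of $f_{21}$ at $\infty$ makes $E_{2}$ bounded, hence holomorphic, at $z=0$, killing the potential essential singularity $e^{\mu/z}$; and $f_{21}=o(f_{11})=o(z^{-l}e^{\mu/z})$ at $0$ gives $E_{2}=o(z^{l})$ at $\infty$. Thus $E_{2}$ is entire with polynomial growth, i.e. a nontrivial polynomial solution of (\ref{heun2}).

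The main obstacle is the converse step asserting that $f_{1}$ is bounded at infinity: a priori the recessive-at-$0$ solution could acquire at $\infty$ the exponentially large component $\sim z^{-l-1}e^{-\mu z}$, which would turn $E_{2}$ into a function with an essential singularity at $0$ rather than a polynomial. Excluding this is precisely where the hypothesis $c_{0}\neq0$ is indispensable: it reduces the space of single-valued solutions to one dimension, so that $\bbi$—which exchanges the behaviors at $0$ and $\infty$—must return $f_{1}$ to a multiple of itself, forcing the bounded asymptotics. When instead $c_{0}=0$ (the constriction case) all solutions are single-valued and this rigidity fails, in agreement with Theorem \ref{thpen}.
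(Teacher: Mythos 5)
Your proof is correct, but it takes a genuinely different route from the paper's. The paper argues in both directions through the auxiliary system (\ref{tty})$_-$ (parameter $-l$) and the transformation $G_{\pm l}$ of (\ref{bemol}): the polynomial solution of (\ref{heun2}) corresponds to an entire solution of (\ref{tty})$_-$, namely its second canonical solution in both sectors, whose $G_{-l}$-image is the first canonical solution of (\ref{tty}) in both sectors, whence $C_1=Id$; conversely, the $G_l$-image of the monodromy-invariant solution yields an entire solution of (\ref{heun2}), whose polynomiality is deduced by citing \cite[lemma 3]{bt1} (otherwise the $l$-th derivative operator would produce a nontrivial entire solution of (\ref{heun}), impossible when $c_0\neq0$). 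You never introduce (\ref{tty})$_-$: in the forward direction you pull the polynomial back through $\Diamond$ to a single-valued solution of (\ref{tty}) itself and identify it with $f_{1\pm}$ by its recessive asymptotics along left directions --- an identification that the paper, for $G_{\pm l}$, justifies only by a brief reference to the definitions and (\ref{wpm}); in the converse you replace the derivative-operator citation by a rigidity argument: $c_0\neq0$ makes the single-valued solutions a line, forcing $\bbi f_1=\gamma f_1$, and the asymptotics of $\bbi f_1$ at infinity (computed from those of $f_1$ at $0$) plus a Liouville-type bound give polynomiality. Your converse is in fact the same mechanism as the paper's own Lemma \ref{lkey}, Case 2 (there $q_{21}=0$ means $f_1\propto g_1=\bbi f_1$), run with hypotheses and conclusion interchanged. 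What your route buys is self-containedness (no second system, no appeal to \cite[lemma 3]{bt1}) and a transparent localization of where $c_0\neq0$ is indispensable; what it costs are a few small debts left to the reader --- $\Diamond^2=Id$ (a one-line computation), $f_{21}\not\equiv0$ (else $f_1\equiv0$ by the second equation of (\ref{tty})), and the fact that the canonical asymptotics of $f_1$ at $0$ hold in all directions, which is legitimate because $f_{1+}=f_{1-}$ and $\overline S_+\cup\overline S_-$ covers a punctured neighborhood of $0$ --- all of which are routine.
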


\begin{proof} Let equation (\ref{heun2}) have a polynomial solution $E(z)$. Then $l\in\nn$, see \cite[section 3]{bt0}, and  
the corresponding equation (\ref{heun}) has 
no entire solution, by Theorem \ref{thpen}. Hence, $c_0\neq0$, by Theorem \ref{thec}. Let us show 
that $c_1=0$. Consider the involution 
$$I:(\phi,t)\mapsto(-\phi,t+\pi),$$
which transforms equation (\ref{jos}) to the same equation with opposite sign at $B$. It induces the "right bemol" 
transformation from  \cite[the formula after (8)]{bt3} (see also \cite[formula (38)]{bt1}) 
sending solutions of equation (\ref{heun}) to solutions of equation (\ref{heun2}).  
It also induces the transformation\footnote{The transformation $-iG_l$ is equivalent to 
the right bemol transformation from \cite[the formula after (8)]{bt3}}
\begin{equation} G_l: (\hat u, \hat v)(z)=z^{l}e^{\mu(\frac1z-z)}(v(-z),u(-z)),\label{bemol}\end{equation}
which sends solutions of system (\ref{tty}) to those of the same system with opposite sign at $l$; the latter system 
(\ref{tty}), where $l$ is taken with the "-" sign, will be referred to, as (\ref{tty})$_-$. And 
vice versa, 
the transformation $G_{-l}$ sends solutions of system (\ref{tty})$_-$ to those 
of (\ref{tty}). It transforms canonical sectorial basic solutions of (\ref{tty})$_-$ in $S_{\pm}$ (appropriately normalized by constant factors) to those of (\ref{tty}) in $S_{\mp}$ and changes the numeration of the canonical basic solutions. This follows from definition and (\ref{wpm}). The polynomial solution $E(z)=e^{\mu z}v(z)$ of equation (\ref{heun2}) is constructed from 
an entire vector solution $h(z)=(u(z), v(z))$ of system (\ref{tty})$_-$, and the latter 
coincides with the second canonical solution of system (\ref{tty})$_-$ in both sectors $S_{\pm}$, as in the proof of Theorem \ref{thec}. Thus, its image $G_{-l}h(z)$, which is holomorphic on $\cc^*$, coincides with
 the first canonical solution of system (\ref{tty}) in both sectors. Hence, the first 
 basic solutions of system (\ref{tty}) coming from different sectors coincide, and thus, 
$C_1=Id$, $c_1=0$, by definition and (\ref{stokes}). 
Vice versa, let $l\in\nn$, $c_1=0$ and $c_0\neq0$. Then the first canonical sectorial solution of system (\ref{tty}) in 
$S_+$ is invariant under the monodromy operator, by (\ref{mno}) and (\ref{monst}), and coincides with 
that coming from the sector $S_-$. Its image under the transformation $G_l$ is the 
second canonical solution of system (\ref{tty})$_-$, the same in both sectors, and hence, it  is holomorphic on $\cc$. 
Let $\hat E(z)=e^{\mu z} v(z)$ be the corresponding solution of equation (\ref{heun2}), which is an entire function. 
Let us show that it is polynomial. Indeed, in the contrary case the operator of taking $l$-th derivative would send it 
to a non-trivial entire solution of equation (\ref{heun}), by \cite[lemma 3]{bt1}, which cannot exist since $c_0\neq0$ and 
by Theorem \ref{thec}. Thus, equation (\ref{heun2}) has a polynomial solution. Theorem \ref{thpol} is proved.
\end{proof}

\subsection{Canonical sectorial solution bases at infinity and the transition matrix "zero-infinity"}

The involution $z\mapsto z^{-1}$ permutes the sectors $S_{\pm}$ and $S_{\mp}$, as does the symmetry with respect to 
the real axis, by assumption, and it fixes each component $\Sigma_{0,1}$ of their intersection. 
The symmetry $\bbi$ of system (\ref{tty}), see (\ref{diez}), 
 induces an isomorphism of the solutions spaces 
of system (\ref{tty}) on the sectors $S_{\pm}$ and $S_{\mp}$ and an automorphism of the two-dimensional space 
of solutions on the component $\Sigma_1$. 

\begin{definition} The {\it canonical bases} of solutions of system (\ref{tty}) in $S_{\pm}$ "at infinity" are those obtained 
from appripriately normalized canonical solution bases at the opposite sectors $S_{\mp}$ by the automorphism $\bbi$; their fundamental matrices are denoted by $\hat W_{\pm}(z)$. In more detail, we set 
\begin{equation} \hat W_-=\bbi (W_+M_N),\label{hatw}\end{equation}
where  the right-hand side is the result of application of the transformation $\bbi$ to the columns of the fundamental matrix under question. 
\end{definition}

\begin{proposition} The canonical bases $W_+M_N$ and $\hat W_-$ in the space of solutions of 
system (\ref{tty}) in $\Sigma_1$ are related by the transition matrix $Q=Q(B,A)$ that is an involution:  
\begin{equation}W_+(z)M_N=\hat W_-(z)Q, \ Q^2=Id.\label{connect}\end{equation}
\end{proposition}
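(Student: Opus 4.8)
The plan is to obtain $Q$ as the constant transition matrix between two fundamental solution matrices of system (\ref{tty}) on the connected component $\Sigma_1$, and then to extract $Q^2=Id$ from the fact that $\bbi$ is a $\cc$-linear involution. First I would check that both $W_+M_N$ and $\hat W_-$ are fundamental matrices of (\ref{tty}) over $\Sigma_1$. For the first this is immediate: by (\ref{wnn}) one has $W_+M_N=W_{+,1}$, a fundamental matrix on all of $S_+\supset\Sigma_1$. For the second, recall that $\bbi$ is a $\cc$-linear invertible symmetry of (\ref{tty}) (formula (\ref{diez}) involves no complex conjugation) and that the involution $z\mapsto z^{-1}$ fixes $\Sigma_1$; hence $\bbi$ carries the columns of $W_+M_N$ to a new basis of solutions of (\ref{tty}) on $\Sigma_1$, so $\hat W_-=\bbi(W_+M_N)$ is again a fundamental matrix there. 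Two fundamental matrices of the same linear system on a connected set differ by a matrix constant in $z$ multiplying on the right; this produces $Q=Q(B,A)$ and the relation $W_+M_N=\hat W_-Q$ of (\ref{connect}).

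For the involution property the key input is $\bbi^2=Id$ on the solution space. I would verify this by applying (\ref{diez}) twice: the two scalar prefactors multiply to $(-i)(i)\,z^{-l}z^{l}\,e^{\mu(\frac1z-z)}e^{\mu(z-\frac1z)}=1$, while the two sign-twisted coordinate swaps compose to the identity, so indeed $\bbi^2=Id$. Granting this, apply $\bbi$ column-wise to both sides of (\ref{connect}). The left-hand side yields $\bbi(W_+M_N)=\hat W_-$ by the definition (\ref{hatw}). On the right-hand side, $\cc$-linearity lets me pull the constant matrix $Q$ past $\bbi$, giving $\bbi(\hat W_-)Q=\bbi^2(W_+M_N)Q=(W_+M_N)Q=\hat W_-Q\cdot Q=\hat W_-Q^2$. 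Comparing the two expressions gives $\hat W_-=\hat W_-Q^2$, and since $\hat W_-$ is invertible this forces $Q^2=Id$.

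The argument is essentially formal, so the only delicate points are bookkeeping ones. The first is that $\bbi$ genuinely preserves solutions of (\ref{tty}) on $\Sigma_1$ with a single-valued branch of the factor $z^{-l}e^{\mu(\frac1z-z)}$; this is safe because $\Sigma_1$ is a simply connected sector fixed by $z\mapsto z^{-1}$, and it is exactly the content of the discussion preceding the proposition. The second, where I expect the real (if still routine) care, is the verification $\bbi^2=Id$: it depends on the precise normalization constant $-i$ and on the exponent $\mu(\frac1z-z)$ appearing in (\ref{diez}), and a different choice of either would produce a scalar $\neq1$ and destroy the involution. Once these are in place, the fact that $Q$ multiplies on the right while $\bbi$ acts $\cc$-linearly column-wise is what lets $Q$ commute past $\bbi$ cleanly, and the involution identity $Q^2=Id$ drops out.
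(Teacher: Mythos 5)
Your proposal is correct and takes essentially the same route as the paper: the paper's proof likewise rests on the fact that $\bbi$ is an involution permuting the bases $W_+M_N$ and $\hat W_-$, so that $Q$, being the matrix of $\bbi$ in the base $\hat W_-$, squares to the identity; you merely spell out the existence of the constant transition matrix and the check $\bbi^2=Id$ explicitly. One bookkeeping remark: in that check the two scalar prefactors actually multiply to $(-i)^2=-1$ while the sign-twisted swap $(u,v)\mapsto(-v,u)$ composed with itself gives $-Id$, and it is the cancellation of these two signs that yields $\bbi^2=Id$, so your conclusion stands.
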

\begin{proof} The automorphism $\bbi$ of the solution space of system (\ref{tty}) is an involution, as is $\#$. It 
 permutes the 
canonical bases with fundamental matrices $W_+(z)M_N$ and $\hat W_-(z)$. This implies that the transition 
matrix $Q$ between them, which is the matrix of the automorphism in the base $\hat W_-(z)$, is also an involution.
\end{proof}

\begin{theorem} \label{tconn} Let $l=\frac B{\omega}\geq0$. 
The matrix $Q=Q(B,A)$ from (\ref{connect}) has the form 
\begin{equation}Q=\left(\begin{matrix} -a & b\\ 
-c & a\end{matrix}\right).\label{qform}\end{equation}
The  coefficients of the matrix $Q$ satisfy the following system of equations: 
\begin{equation} \begin{cases}  a^2=bc+1\\
 a(1-e^{2\pi i l}+c_0c_1)=bc_1e^{-2\pi i l}-cc_0\end{cases},\label{quad}\end{equation}
where $c_0=c_0(B,A)$, $c_1=c_1(B,A)$ are the Stokes multipliers. One has 
\begin{equation} b,c\neq0 \text{ whenever } (B,A) \text{ is a constriction.}\label{inbc}\end{equation}
\end{theorem}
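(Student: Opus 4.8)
The plan is to treat the statement in three pieces: the shape (\ref{qform}) together with the relation $a^2=bc+1$; the second relation in (\ref{quad}); and finally the non-vanishing (\ref{inbc}), which I expect to be the real difficulty.

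\emph{Form of $Q$ and the relation $a^2=bc+1$.} Since $Q^2=Id$ is already available, it suffices to compute $\det Q$ and show it equals $-1$. First I would fix the Wronskians by Liouville's formula: the trace of the coefficient matrix of (\ref{tty}) is $-z^{-2}(lz+\mu(1+z^2))$, so any fundamental matrix has determinant of the form $K\,z^{-l}e^{\mu(\frac1z-z)}$, and $H_{\pm}(0)=Id$ forces $\det W_{\pm}(z)=z^{-l}e^{\mu(\frac1z-z)}$. A direct computation of the action of $\bbi$ from (\ref{diez}) on a $2\times2$ fundamental matrix gives $\det(\bbi W)(z)=-z^{-2l}e^{2\mu(\frac1z-z)}\det W(z^{-1})$, whence $\det\hat W_-=-e^{-2\pi il}z^{-l}e^{\mu(\frac1z-z)}$, while $\det(W_+M_N)=e^{-2\pi il}z^{-l}e^{\mu(\frac1z-z)}$. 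Comparing these through (\ref{connect}) yields $\det Q=-1$. An involution with determinant $-1$ has zero trace, so $Q=\left(\begin{smallmatrix}-a&b\\-c&a\end{smallmatrix}\right)$, and $\det Q=-a^2+bc=-1$ is precisely $a^2=bc+1$.

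\emph{The second relation.} Because $\bbi$ carries solutions on $S_+$ to solutions on $S_-$ and commutes with right multiplication by constant matrices, there is a constant matrix $R$ with $\bbi W_+=W_-R$ on $S_-$, so $\hat W_-=\bbi(W_+M_N)=W_-RM_N$. Substituting into (\ref{connect}) and using $W_+M_N=W_-C_1$ on $\Sigma_1$ gives $Q=M_N^{-1}R^{-1}C_1$. To determine $R$ I would apply $\bbi$ to the Stokes relation $W_-=W_+C_0$ on $\Sigma_0$; using $\bbi^2=Id$ and $\bbi W_-=W_+R^{-1}$ this produces $R^{-1}=C_0RC_0$, i.e.\ $(RC_0)^2=Id$. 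Writing $R=\left(\begin{smallmatrix}\alpha&\beta\\\gamma&\delta\end{smallmatrix}\right)$, the conditions $\det R=-1$ and $\operatorname{tr}(RC_0)=0$ give $\alpha+\delta=-\gamma c_0$; expressing the entries of $Q=M_N^{-1}R^{-1}C_1$ through $\alpha,\beta,\gamma,\delta$ and eliminating them then yields the second equation of (\ref{quad}). The only delicate point is the multivaluedness of $z^{-l}$: the continuation of $F$ across $\rr_-$ and $\rr_+$ differs by $M_N$, and correct bookkeeping of this is exactly what produces the factors $e^{\pm2\pi il}$ (which are trivial in the constriction case $l\in\zz$).

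\emph{Non-vanishing at a constriction, part one.} At a constriction $l\in\zz$ and $c_0=c_1=0$ (Theorem \ref{thec}), so $M_N=Id$, the monodromy is trivial, $f_1:=f_{1+}=f_{1-}$ and $f_2:=f_{2+}=f_{2-}$ are single-valued on $\cc^*$, and $Q=R$ is the matrix of the involution $\bbi$ in the basis $(f_1,f_2)$; thus $b=0$ means $\langle f_2\rangle$ is $\bbi$-invariant and $c=0$ means $\langle f_1\rangle$ is $\bbi$-invariant. For $b\neq0$: since $c_0=0$, the solution $f_2$ is entire (Theorem \ref{thec}), and because (\ref{heun}) never admits a polynomial solution it is transcendental, hence dominant (exponentially growing) at infinity, whereas $\bbi f_2$ is subdominant there; hence $\bbi f_2\not\propto f_2$, i.e.\ $b\neq0$. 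Equivalently, comparing the second components in $\bbi f_2=af_2$ as $z\to0^+$: the left side carries the factor $z^{-l}e^{\mu/z}$ and blows up, while $af_2\to a\,v_2(0)=a$ is finite, a contradiction.

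\emph{The main obstacle: $c\neq0$.} Here $f_1$ is \emph{not} entire (it has an essential singularity at $0$), and the naive asymptotic comparison fails because in $\bbi f_1=-af_1$ the essential singularities of the two sides cancel exactly. In fact $c=0$ is equivalent to the assertion that the first canonical solution $f_1$, subdominant along $\rr_-$ as $z\to0$, is \emph{also} subdominant along $\rr_-$ as $z\to\infty$; this is a genuine connection statement — the vanishing of a connection coefficient between the subdominant solutions at the two irregular points — and cannot be excluded by local asymptotics alone. My plan is to rule it out by the symmetry $G_l$ of (\ref{bemol}), which exchanges (\ref{tty}) with its $l\mapsto-l$ counterpart and swaps the numeration of canonical solutions, thereby converting ``$c=0$ at a constriction of the $l$-system'' into ``$b=0$ at the symmetric constriction of the $(-l)$-system'', already excluded above; as a possible alternative one may try to show that $c=0$ would force a polynomial solution of (\ref{heun2}) (note that at a simple intersection, where (\ref{heun2}) has a polynomial solution, the second equation of (\ref{quad}) already gives $c=0$), contradicting $c_0=0$ via Theorems \ref{thpol} and \ref{thpen}. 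Making one of these reductions rigorous — in particular controlling how $\bbi$, $Q$ and the chosen normalizations transform under $G_l$ — is where the essential work lies.
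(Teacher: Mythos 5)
Your treatment of the first two assertions is sound and, in one place, genuinely different from the paper: you obtain the form (\ref{qform}) and the relation $a^2=bc+1$ by computing $\det Q=-1$ directly from Wronskians and the action of $\bbi$, whereas the paper derives them from the non-vanishing of $q_{12},q_{21}$ at constrictions (its key Lemma \ref{lkey}) plus analyticity in $(B,A)$; your route is cleaner in that it does not consume the hard lemma, though for non-integer $l$ you must fix branches of $z^{-l}$ consistently in $\bbi$, in $F$, and in $\bbi^2=Id$, or the sign of $\det Q$ is not well defined. Your $R$-matrix derivation of the second relation of (\ref{quad}) is workable and essentially equivalent to the paper's comparison of the monodromy matrices $M_+=C_1^{-1}C_0^{-1}M_N$ and $\hat M_-=e^{-2\pi il}M_+^{-1}$ conjugated by $Q$, but you leave the elimination unperformed, and that elimination is exactly where the factors $e^{\pm2\pi il}$ must be tracked. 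Your argument for $b\neq0$ is the paper's in disguise, except that ``transcendental entire, hence exponentially dominant at infinity'' is asserted, not proved; the rigorous version is Liouville applied to $\hat E_2=\# E_2=2\omega z^{-l-1}\bigl(E_2'(z^{-1})-\mu E_2(z^{-1})\bigr)$, which tends to $0$ in \emph{all} directions at infinity because $E_2$ is holomorphic at $0$ and $l\geq 0$.

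The genuine gap is $c\neq0$, which you explicitly leave open, and your primary strategy for it would fail. The $G_l$-reduction converts ``$c=0$ for the $l$-system'' into ``the second canonical solution of the $(-l)$-system is entire and $\bbi$-invariant,'' but this is \emph{not} ``already excluded above'': the Liouville argument you used for $b\neq0$ is not symmetric under $l\mapsto -l$. For the reflected system the Heun equation is (\ref{heun2}) and the operator $\#$ carries the factor $z^{l-1}$ in place of $z^{-l-1}$; this factor grows at infinity when $l\geq1$, so ``entire and proportional to its $\#$-image'' yields only that the solution is an entire function of polynomial growth, i.e.\ a \emph{polynomial solution of (\ref{heun2})} --- which is not absurd by itself (as you yourself observe, it is exactly what happens at simple intersections, where indeed $c=0$). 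A contradiction appears only when you then invoke the constriction hypothesis through Theorem \ref{thpen} (or \ref{thpol}): a polynomial solution of (\ref{heun2}) forces (\ref{heun}) to have no entire solution, hence $(B,A)$ is not a constriction.

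That last chain is precisely the paper's proof (Case 2 of Lemma \ref{lkey}), i.e.\ your ``possible alternative,'' and the work you have not done is its technical core: (i) at a constriction the Stokes matrices are trivial, so $H_+$ and $H_-$ glue into a single matrix $H$ holomorphic at $0$ with $H(0)=Id$, giving the exact local form $E_1(z)=z^{1-l}e^{\mu/z}h(z)$ with $h$ holomorphic at $0$; (ii) from this, $\# E_1=O(e^{\mu z})$ at infinity, so the hypothesis $E_1\propto \# E_1$ yields the two-sided control $E_1=O(z^{1-l}e^{\mu/z})$ at $0$ and $E_1=O(e^{\mu z})$ at infinity; (iii) the transformation $\Diamond: E\mapsto e^{\mu(z+z^{-1})}E(-z^{-1})$ then turns $E_1$ into an entire solution of (\ref{heun2}) that is $O(z^{l-1})$ at infinity, hence a polynomial, and Theorem \ref{thpen} gives the contradiction. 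Without steps (i)--(iii) the statement (\ref{inbc}) --- and with it, in the paper's logic, everything that rests on Lemma \ref{lkey} --- remains unproved.
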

Theorem \ref{tconn} is proved below. Its proof and the proof of the main results of the paper  are based on 
the following key lemma. 

\begin{lemma} \label{lkey} Let $Q=(q_{ij}(B,A))$ be the same, as in (\ref{connect}). If $(B,A)$ is a constriction, then 
$q_{12}(B,A), q_{21}(B,A)\neq0$. 
\end{lemma}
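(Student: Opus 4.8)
The plan is to work at a fixed constriction $(B_0,A_0)$ and exploit the fact, recorded in Theorem~\ref{thec}, that there $l=\frac{B_0}\omega\in\zz$, $c_0=c_1=0$, the monodromy of (\ref{tty}) is trivial, and the germ of (\ref{tty}) at $0$ is \emph{analytically} (not merely sectorially) equivalent to its formal normal form. By the symmetry $\bbi$ of (\ref{diez}), which conjugates the two singular points $0$ and $\infty$, the germ at $\infty$ is then analytically trivial as well, so all Stokes data at both points vanish. Consequently every solution is single-valued on $\cc^*$, the normalizing maps $H_\pm$ are honestly holomorphic at $0$ and at $\infty$, the two sectorial bases at $0$ glue into one global base $W=(f_1,f_2)$ with $f_2$ entire (the entire solution of Theorem~\ref{thec}(5)) and $f_1$ equal to $z^{-l}e^{\mu(\frac1z-z)}$ times a function holomorphic at $0$, and likewise we obtain one global base $\hat W_-=(g_1,g_2)$ at $\infty$. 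Since $M_N=Id$ for $l\in\zz$, relation (\ref{connect}) reads $(f_1,f_2)=(g_1,g_2)Q$, while (\ref{hatw}) gives $g_j=\bbi(f_j)$. Substituting into (\ref{diez}) (and using $\mu>0$, Convention~\ref{conv1}) one checks that $g_1=\bbi(f_1)$ is recessive, i.e.\ holomorphic and bounded, at $\infty$, whereas $g_2=\bbi(f_2)$ is dominant there, behaving like $z^{-l}e^{\mu(\frac1z-z)}$.

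I would then translate the two vanishing statements into proportionalities: $q_{12}(B_0,A_0)=0$ means $f_2=q_{22}g_2$, and $q_{21}(B_0,A_0)=0$ means $f_1=q_{11}g_1$; I rule out each in turn. \textbf{Case $q_{12}=0$.} Here the entire solution $f_2$ is a scalar multiple of the dominant-at-$\infty$ solution $g_2$. Writing $E_2=e^{\mu z}v_2$ for the associated solution of (\ref{heun}) and inserting the analytic product form of $g_2$ at $\infty$, the exponentials $e^{\mu z}$ and $e^{-\mu z}$ cancel, so $E_2$ acquires polynomial growth $O(z^{-l-1})$ in \emph{every} direction at $\infty$. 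Being entire with this growth, $E_2$ would be a polynomial solution of (\ref{heun}) (in fact a decaying, hence identically zero one), contradicting both $E_2\not\equiv0$ and the nonexistence of polynomial solutions of (\ref{heun}).

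\textbf{Case $q_{21}=0$.} Here $f_1$ is a multiple of $g_1$, hence holomorphic at $\infty$. Applying the transformation $G_l$ of (\ref{bemol}), which sends solutions of (\ref{tty}) to solutions of the opposite-sign system (\ref{tty})$_-$ and interchanges the behavior at $0$ and at $\infty$, I would verify directly that $G_l(f_1)$ is holomorphic (and nonzero) at $0$, i.e.\ it is the entire vector solution of (\ref{tty})$_-$; its associated function $E^-=e^{\mu z}(\,\cdot\,)$ solves the conjugate equation (\ref{heun2}). Evaluating its behavior at $\infty$ from $f_1=O(1)$ there (with $u$-component $O(1/z)$ by analyticity), the exponentials again cancel and $E^-$ has polynomial growth $O(z^{l-1})$. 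Thus $E^-$ is a nontrivial polynomial solution of (\ref{heun2}), which by Theorem~\ref{thpol} forces $c_0\neq0$, contradicting $c_0=0$ at the constriction. This establishes $q_{12},q_{21}\neq0$.

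The heart of the argument, and the step I expect to be the main obstacle, is the passage from ``proportional to the dominant sectorial solution at the opposite singular point'' to ``polynomial growth of the associated Heun solution in all directions.'' This needs two ingredients: the analytic triviality at both $0$ and $\infty$, so that the sectorial asymptotics are honest, uniform and free of Stokes jumps; and the exact cancellation of the factors $e^{\pm\mu z}$ hidden in the correspondence $E=e^{\mu z}v$ and in the symmetries $\bbi$ and $G_l$. Once these are secured, the classical principle that an entire function of polynomial growth is a polynomial converts each vanishing hypothesis into a forbidden polynomial solution, of (\ref{heun}) in the first case and of (\ref{heun2}) in the second.
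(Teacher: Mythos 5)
Your proof is correct and follows essentially the same route as the paper's: the same two-case split, with the exponential-cancellation-plus-Liouville argument when $q_{12}=0$, and the production of a nontrivial polynomial solution of the conjugate Heun equation (\ref{heun2}) when $q_{21}=0$, contradicting the constriction hypothesis via Theorems \ref{thec}, \ref{thpen}, \ref{thpol}. The only cosmetic differences are that you work at the vector level with the symmetries $\bbi$ and $G_l$ where the paper works at the scalar level with the equivalent operators $\#$ and $\Diamond$, and that you close the second case by citing Theorem \ref{thpol} rather than Theorem \ref{thpen}.
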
 
\begin{proof} Let $f_1=(f_{11},f_{21})(z)$, $f_2=(f_{12},f_{22})(z)$ be the basis of  solutions of system (\ref{tty}) with the fundamental matrix 
$W_+M_N=(f_{ij})$, and let $g_1=(g_{11},g_{21})(z)$, $g_2=(g_{12},g_{22})(z)$ be the basis with the fundamental matrix $\hat W_-=(g_{ij})$.  Let $E_j(z)=e^{\mu z}f_{2j}(z)$, $\hat E_j(z)=e^{\mu z}g_{2j}(z)$ be the solutions 
of equation (\ref{heun}) defined by $f_j$ and $g_j$ respectively. Recall that in our case, when $(B,A)$ is a constriction, one has $l=\frac B{\omega}\in\zz$, by 
Theorem \ref{thec}.  Note that all the functions 
$f_j$, $g_j$, $E_j$, $\hat E_j$ are holomorphic on $\cc^*$, since the monodromy of the system (\ref{tty}) is trivial, by Theorem \ref{thec}. Suppose the contrary: in the corresponding matrix $Q(B,A)$ 
from (\ref{connect}) one has $q_{12}q_{21}=0$. We treate the two following cases separately. 

Case 1): $q_{12}=0$. This means that $f_2=g_2$, $E_2=\hat E_2$ up to constant factor. 
Note that $f_2(z)$ is an entire function, and 
hence, so are $E_2$ and $\hat E_2$ (see the proof of Theorem \ref{thec}). 
On the other hand, 
$$\hat E_2(z)=\# E_2(z)=2\omega z^{-l-1}(E_2'(z^{-1})-\mu E_2(z^{-1})),$$
which follows from the definition of the transformation $\bbi$, see the beginning of 
Subsection 2.3. 
It is clear that the right-hand side of the latter formula tends to zero, as $z\to\infty$, since $E_2$ is holomorphic 
at 0. Finally, $\hat E_2(z)$ is an entire function tending to 0, as $z\to\infty$. Therefore, $\hat E_2=E_2\equiv0$, by Liouville Theorem, 
and the basic function $f_2$ is identically equal to zero. The contradiction thus obtained proves that $q_{12}\neq0$. 

Case 2): $q_{21}=0$. This means that $f_1=c g_1$, $c\equiv const\neq0$, hence $E_1=c\hat E_1$. 
Note that in our case the Stokes matrices are trivial, by Theorem \ref{thec}, and hence, 
system (\ref{tty}) is analytically equivalent to its formal normal form. Therefore, 
the sectorial normalizations $H_{\pm}(z)$ coincide with one and the same matrix function $H(z)$ holomorphic 
at $0$, $H(0)=Id$. The canonical basic solution 
$f_1(z)$  is equal to  $z^{-l}e^{\mu(\frac1z-z)}$ times the first column of the matrix $H(z)$; the upper element 
of the latter column equals 1 at 0 and the lower element vanishes at 0. Hence, 
\begin{equation} E_1(z)=e^{\mu z} f_{21}(z)=z^{1-l}e^{\frac{\mu}z}h(z), 
\text{ as } z\to0,\label{uvf}
\end{equation}
where the function $h(z)=z^{-1}H_{21}(z)$  is holomorphic at $0$. On the other hand,
\begin{equation*}
\hat E_1(z)=(\# E_1)(z)=2\omega z^{-l-1}(E_1'(z^{-1})-\mu E_1(z^{-1}))=O(e^{\mu z}), \text{ as } z\to\infty,\end{equation*}
by (\ref{uvf}). This together with  (\ref{uvf}) and the equality $E_1=c\hat E_1$ 
yields that 
\begin{equation} E_1(z)=O(z^{1-l}e^{\frac{\mu}z}), \text{ as } z\to0; \ E_1(z)=O(e^{\mu z}), \text{ as } z\to\infty.
\label{e1}\end{equation}
 
 The transformation (\ref{diamond}) sends $E_1$ to the solution $\mce$ of the other Heun equation (\ref{heun2}). 
We claim that $\mce$ is a polynomial. Indeed, 
$$\mce(z)=e^{\mu(z+\frac1z)}E_1(-z^{-1}),$$
$$\mce(z)=O(z^{l-1}), \text{ as } z\to\infty, \ \mce(z)=O(1), \text{ as } z\to0,$$
by (\ref{e1}). Thus, $\mce$ is an entire function with at most polynomial growth at infinity, and hence, is a 
polynomial solution of equation (\ref{heun2}). Therefore, equation (\ref{heun}) has no 
entire solution, and  the point $(B,A)$ under consideration is not a constriction, by 
Theorem \ref{thpen}. The contradiction thus obtained proves that $q_{12}, q_{21}\neq0$. Lemma \ref{lkey} is proved.
\end{proof}

\begin{proof} {\bf of Theorem \ref{tconn}.} One has 
$$
Q^2=\left(\begin{matrix} q_{11}^2+q_{12}q_{21} & q_{12}(q_{11}+q_{22})\\
q_{21}(q_{11}+q_{22}) & q_{22}^2+q_{12}q_{21})\end{matrix}\right)=Id,$$
since $Q$ is an involution. Let us prove that $Q$ has the same type, as in (\ref{qform}). 
By the above equation, there are two possible cases (we denote $a=q_{22}$):

a) $a=q_{22}=-q_{11}$, $a^2+q_{12}q_{21}=1$;  

b) $q_{12}=q_{21}=0$, $q_{11}, q_{22}=\pm1$. 

Note that the equations of cases a) and b) are analytic in $(B,A)\in\rr_{\geq0}\times\rr_+$. Each one of the equations $q_{12},q_{21}=0$ 
does not hold whenever 
$(B,A)$ is a constriction (Lemma \ref{lkey}). Hence, these equations are both not satisfied on an open
 subset of points $(B,A)\in\rr_{\geq0}\times\rr_+$. Therefore, equations of case a) hold on the 
latter open subset, and thus, for all $(B,A)\in\rr_{\geq0}\times \rr_+$, by analyticity. This implies that  
the matrix $Q$ has type (\ref{qform}) and satisfies the first relation in (\ref{quad}). 
Statement (\ref{inbc}) of Theorem \ref{tconn} follows from Lemma \ref{lkey}. 
Let us prove the second relation in (\ref{quad}). To this end, we compare the monodromy matrices 
of system (\ref{tty}) in bases $(f_1,f_2)$, $(g_1,g_2)$ represented by the fundamental matrices 
$W_+M_N$ and $\hat W_-$ respectively and write the equation saying that they are conjugated by $Q$. The 
monodromy matrix in the base $W_+M_N$ equals 
\begin{equation}M_+=M_N^{-1}M_NC_1^{-1}C_0^{-1}M_N=C_1^{-1}C_0^{-1}M_N,\label{m+}\end{equation} 
see (\ref{monst}). 
We consider the monodromy operator under question as acting in the space of germs of solutions of system (\ref{tty}) 
at the point $z=1$: a fixed point of the involution $z\mapsto z^{-1}$. 
To calculate the monodromy matrix in the base $(g_1,g_2)$, note that its fundamental matrix $\hat W_-$ is 
obtained from the matrix $W_+M_N$ by the substitution $z\mapsto z^{-1}$, permutation of lines, 
 change of sign of the first line and subsequent multiplication of the whole matrix by  the scalar factor 
 $-iz^{-l}e^{\mu(\frac 1z- z)}$, see (\ref{diez}) and (\ref{hatw}). Permutation of lines and change of sign of one line 
 do not change the monodromy. The substitution $z\mapsto z^{-1}$ changes the monodromy to inverse. 
 As $z$ makes one counterclockwise turn around zero, 
 the latter scalar factor is multiplied by $e^{-2\pi i l}$.  Finally, the monodromy matrix in the base $\hat W_-$ equals 
 \begin{equation}\hat M_-=e^{-2\pi i l}M_+^{-1}=e^{-2\pi i l}M_N^{-1}C_0C_1.\label{hm-}
\end{equation}
On the other hand, $\hat M_-=QM_+Q^{-1}$, by (\ref{connect}). 
Substituting this formula and (\ref{m+}) to (\ref{hm-}) and 
taking into account that $Q^{-1}=Q$ yields
$$M_+Q=Q\hat M_-=C_1^{-1}C_0^{-1}M_NQ=e^{-2\pi i l}QM_N^{-1}C_0C_1.$$
For a matrix $Q$ of type  (\ref{qform}), the latter equation is equivalent to the second relation in (\ref{quad}). 
Theorem \ref{tconn} is proved.
\end{proof} 

\section{Rotation number and trace of monodromy. Proof of  Theorems \ref{locon} and \ref{intcon}}

\subsection{Trace and phase-lock areas}
In what follows we consider the monodromy matrix $M$ of system (\ref{tty}) written in the base $W_+$, see (\ref{monst}). 
Note that $\det M=e^{-2\pi i l}$. We normalize the matrix $M$   by the scalar factor $e^{\pi i l}$ to make it unimodular:
\begin{equation}\wt M=e^{\pi i l}M=e^{\pi il} M_NC_1^{-1}C_0^{-1}=
\left(\begin{matrix} e^{-\pi il} & - e^{-\pi il}c_0\\ -e^{\pi il}c_1 & 
e^{\pi il}(1+c_0c_1)\end{matrix}\right).\label{wtm}\end{equation}
\begin{proposition} The trace of the matrix $\wt M$ is real, that is, $e^{\pi i l}c_0c_1\in\rr$ 
for all the real parameters $(B,A)$ with $A\neq0$. 
\end{proposition}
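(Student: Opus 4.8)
The plan is to avoid tracking the individual Stokes multipliers $c_0$ and $c_1$ (which, for $l\notin\zz$, need not be real) and instead to prove reality of the entire trace directly from the anti-holomorphic symmetry $J$ of (\ref{ssymm}). First I would record the elementary reduction: by (\ref{wtm}) one has $\operatorname{tr}\wt M=e^{-\pi il}+e^{\pi il}(1+c_0c_1)=2\cos(\pi l)+e^{\pi il}c_0c_1$, and since $2\cos(\pi l)\in\rr$ for real $l$, the reality of $\operatorname{tr}\wt M$ is equivalent to $e^{\pi il}c_0c_1\in\rr$. Moreover $\operatorname{tr}\wt M=e^{\pi il}\operatorname{tr}M$, and $\operatorname{tr}M$ is conjugation-invariant, so it suffices to compare $\operatorname{tr}M$ with its complex conjugate in a basis-free way.

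Next I would work with the monodromy operator $\mathcal{M}$ of system (\ref{tty}) — the counterclockwise continuation around $0$ — acting on germs of solutions at the real base point $z_0=1\in\rr_+\subset S_+\cap S_-$, which is fixed by $z\mapsto\bar z$; in the base $W_+$ its matrix is $M$ from (\ref{monst}). Since the right-hand side of (\ref{tty}) has real-polynomial diagonal entries and off-diagonal entries $\frac z{2i\omega}$, $\frac1{2i\omega z}$, the map $J$ of (\ref{ssymm}) is a symmetry of the system for all real parameters with $A\neq0$, and it induces an anti-linear involution on the germs of solutions at $z_0$. The key step is the orientation-reversal identity $\mathcal{M}\circ J=J\circ\mathcal{M}^{-1}$: because $J$ contains the conjugation $z\mapsto\bar z$, continuing $Js$ counterclockwise around $0$ amounts to continuing $s$ clockwise, i.e. applying $\mathcal{M}^{-1}$. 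Writing the anti-linear $J$ in coordinates as $x\mapsto R\bar x$, this identity becomes $MR=R\bar M^{-1}$, so $M$ is conjugate to $\bar M^{-1}$.

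Finally I would conclude by taking traces: $\operatorname{tr}M=\operatorname{tr}(\bar M^{-1})=\overline{\operatorname{tr}(M^{-1})}$. For a $2\times2$ matrix $\operatorname{tr}(M^{-1})=\operatorname{tr}M/\det M$, and $\det M=e^{-2\pi il}$, whence $\operatorname{tr}(M^{-1})=e^{2\pi il}\operatorname{tr}M$. Combining these, $\operatorname{tr}M=e^{-2\pi il}\overline{\operatorname{tr}M}$ (using $l\in\rr$), that is $e^{\pi il}\operatorname{tr}M=\overline{e^{\pi il}\operatorname{tr}M}$. Since $\operatorname{tr}\wt M=e^{\pi il}\operatorname{tr}M$, this gives $\operatorname{tr}\wt M\in\rr$, and hence $e^{\pi il}c_0c_1\in\rr$, as claimed.

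I expect the main obstacle to be a careful justification of the commutation identity $\mathcal{M}J=J\mathcal{M}^{-1}$ rather than any computation: one must check that $\overline{s(\bar z)}$ is genuinely a holomorphic solution germ, that its analytic continuation along the counterclockwise loop is governed by the clockwise continuation of $s$, and that the multivalued scalar factor $z^{-l}e^{\mu(\frac1z-z)}$ entering $F(z)$ is handled with a fixed branch at $z_0=1$ so that no spurious phase appears in $R$. The case $l\notin\zz$ is precisely where this matters, since there the individual multipliers $c_0,c_1$ are not forced to be real and only the combination $e^{\pi il}c_0c_1$ is; the symmetry argument is attractive because it delivers this combined reality in one stroke, consistently with Theorem \ref{tre} for integer $l$.
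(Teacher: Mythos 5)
Your proof is correct, but it takes a genuinely different route from the paper's. The paper argues geometrically: the Riccati equation (\ref{ric}), being the complex extension of the real equation on the two-torus, has its monodromy along the unit time circle acting as a conformal automorphism of the unit disk, hence the projectivized monodromy of (\ref{tty}) is a disk automorphism; a unimodular matrix inducing a disk automorphism lies in $SU(1,1)$ and so has real trace, which gives $\operatorname{tr}\wt M=2\cos(\pi l)+e^{\pi il}c_0c_1\in\rr$ at once. You instead stay entirely on the linear-system side: you reuse the antiholomorphic symmetry $J$ of (\ref{ssymm}) --- the very symmetry the paper introduces to prove Theorem \ref{tre} --- together with the orientation-reversal identity $\mathcal M\circ J=J\circ\mathcal M^{-1}$, which in coordinates gives $M=R\bar M^{-1}R^{-1}$, and then conclude from $\operatorname{tr}(M^{-1})=\operatorname{tr}M/\det M$ and $\det M=e^{-2\pi il}$. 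All your steps check out: $J$ is an anti-linear invertible symmetry for all real $(B,A)$ with $A\neq0$ (only the conclusions about individual Stokes multipliers need $l\in\zz$); the continuation of $z\mapsto\overline{s(\bar z)}$ along the counterclockwise loop is the $J$-image of the continuation of $s$ along the clockwise loop, so the commutation identity holds at the level of operators on germs at $z_0=1$, where no branch choice of $z^{-l}e^{\mu(\frac1z-z)}$ enters; and the trace manipulation is elementary. As for what each approach buys: the paper's proof is shorter and sets up the disk-automorphism picture that is immediately reused in Proposition \ref{ineql} (the criterion $|\operatorname{tr}\wt M|\geq2$ for membership in a phase-lock area), whereas yours is self-contained within the Stokes-theoretic framework, upgrades the symmetry argument of Theorem \ref{tre} from $l\in\zz$ to arbitrary real $l$ in a basis-free way, and makes structurally transparent why for $l\notin\zz$ only the combination $e^{\pi il}c_0c_1$, rather than $c_0$ and $c_1$ separately, is forced to be real.
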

\begin{proof}
The projectivization of system (\ref{tty}), the Riccati equation (\ref{ric}) is the 
complex extension of a differential equation on the torus: the product of the unit circles in space and time variables. 
This implies that the monodromy of the Riccati equation along the unit circle in the time variable is an automorphism 
of the unit disk, and hence, so is the projectivized monodromy of system (\ref{tty}). Hence, 
\begin{equation}tr\wt M=2\cos(\pi l)+e^{\pi il}c_0c_1\in\rr.\label{trr}\end{equation}
\end{proof}
\begin{proposition} \label{ineql}
A point $(B,A)$ belongs to a phase-lock area, if and only if 
\begin{equation}|tr\wt M|=|2\cos(\pi l)+e^{\pi il}c_0c_1|\geq2;\label{treq}\end{equation}
$(B,A)$ belongs to its interior, if and only if the latter inequality is strict. 
\end{proposition}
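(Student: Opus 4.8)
The plan is to read the inequality (\ref{treq}) off the dynamical meaning of $\wt M$ as a Poincar\'e return map, together with the classification of M\"obius transformations preserving the unit disk. First I would make the geometric identification precise. The Riccati equation (\ref{ric}) is the projectivization of (\ref{tty}) and is the complexification of the circle equation (\ref{jostor}): in the variables $\Phi=e^{i\phi}$, $z=e^{i\tau}$ the real dynamics is the restriction to $\{|z|=1\}$, $\{|\Phi|=1\}$. Hence the projective monodromy of (\ref{tty}) along the counterclockwise loop $\{|z|=1\}$ — the M\"obius transformation induced by $\wt M$, the scalar normalization $e^{\pi il}$ being irrelevant after projectivization — is the analytic continuation of the time-$2\pi$ Poincar\'e map $P$ of the field (\ref{josvec}) on the cross-section $\{\tau=0\}$. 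As already used in the proof of (\ref{trr}), the real flow preserves the circle $\{|\Phi|=1\}$ and the disk $\{|\Phi|<1\}$, so this projective monodromy is an automorphism of the unit disk whose restriction to $S^1$ is the circle diffeomorphism $P$, and by the Remark in the Introduction the rotation number of $P$ equals $\rho(B,A)\bmod\zz$.

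Next I would invoke the quantization effect (phase-lock areas occur only for integer $\rho$): $(B,A)$ lies in the union of the phase-lock areas if and only if $\rho(B,A)\in\zz$, i.e. if and only if the rotation number of $P$ vanishes in $\rr/\zz$, i.e. if and only if $P$ has a fixed point on $S^1$. Now $\wt M$ is unimodular with real trace by (\ref{trr}), and since it preserves the disk it is represented, up to conjugacy, by an $SU(1,1)$ matrix, to which the standard trichotomy applies. If $|tr\wt M|<2$ the map is elliptic, its two fixed points form a conjugate pair off $S^1$ (one inside, one outside the disk), so $P$ has no fixed point and $\rho\notin\zz$. If $|tr\wt M|=2$ it is parabolic, with one (possibly degenerate) fixed point on $S^1$, and if $|tr\wt M|>2$ it is hyperbolic, with two fixed points on $S^1$; in both of the latter cases $P$ has a fixed point and $\rho\in\zz$. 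This yields the first assertion: $(B,A)$ belongs to a phase-lock area if and only if $|tr\wt M|\geq2$.

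For the interior, the easy inclusion is that $\{|tr\wt M|>2\}$ is contained in $\bigcup_r Int(L_r)$: this set is open, because $tr\wt M$ is real-analytic in $(B,A)$, and on it $P$ is hyperbolic with structurally stable fixed points, so $\rho$ is a continuous integer-valued, hence locally constant, function, placing each such point in the interior of its level set. The hard part will be the converse, that no point with $|tr\wt M|=2$ can be interior. Here the fixed point of the parabolic map is semistable (one-sided), and at a constriction $\wt M=\pm Id$ is even trivial, so in every case the two M\"obius fixed points have collided on $S^1$; I would show that an arbitrarily small change of $(B,A)$ drives $|tr\wt M|$ strictly below $2$, separating the fixed points off the circle and producing a nearby elliptic map with $\rho\notin\zz$, i.e. a parameter outside every phase-lock area, so the point is not interior. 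The main obstacle is exactly this last step: one must exclude that $|tr\wt M|\equiv2$ on an open set of parameters and verify that the degenerate fixed point genuinely leaves $S^1$ under perturbation. I would settle it using the real-analyticity of $tr\wt M$ together with the description of $\partial L_r$ from \cite{RK} as the analytic graphs $B=g_{r,\pm}(A)$ with the Bessel asymptotics (\ref{bessas}); these force the locus $\{|tr\wt M|=2\}$ to be a union of analytic curves with empty interior, along which $|tr\wt M|$ crosses the value $2$, completing the equivalence.
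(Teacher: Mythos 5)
Your identification of the projectivized monodromy with the complexified time-$2\pi$ Poincar\'e map, the M\"obius/$SU(1,1)$ trichotomy giving ``fixed point on $S^1$ iff $|tr\wt M|\geq2$'', and your easy inclusion $\{|tr\wt M|>2\}\subset\bigcup_r Int(L_r)$ (openness of that set plus local constancy of the integer-valued rotation number) are all correct, and this much coincides with the paper's own argument.

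The gap is exactly at the step you flag as the main obstacle, and the repair you propose does not close it. Real-analyticity of $tr\wt M$ only yields that $\{|tr\wt M|=2\}$ has empty interior (otherwise $tr\wt M\equiv\pm2$ on the connected set $\{A>0\}$, contradicting the existence of points with non-integer rotation number). Empty interior is strictly weaker than the ``crossing'' you need: a priori $tr^2\wt M-4$ could vanish to \emph{even} order along a curve lying inside a phase-lock area, i.e.\ be nonnegative on a whole neighborhood with equality on that curve; every point of such a curve would then be an interior point of $L_r$ with $|tr\wt M|=2$ --- consistent with everything you have established, but contradicting the proposition. Crossing is genuinely direction-dependent in this family: by formula (\ref{astr}) in the proof of Theorem \ref{locon}, at a constriction one has $(-1)^l tr\wt M(B_0,A_0+t)=2+qt^{2n}(1+o(1))$, an even-order zero with no sign change along the vertical line. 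The citation of \cite{RK} does not help either: the graphs $B=g_{r,\pm}(A)$ describe $\partial L_r$, whereas the scenario to be excluded concerns a parabolic point in the \emph{interior}; and for an abstract analytic family of disk automorphisms such interior parabolic points can perfectly occur (take a family with $tr=2+(B-B_0)^2+(A-A_0)^2$: rotation number $0$ on a neighborhood, parabolic at its center). What is missing --- and what the paper actually uses --- is a dynamical input special to (\ref{josvec}): the time-$2\pi$ flow map is \emph{strictly increasing in} $B$, since $\partial\dot\phi/\partial B>0$. If the map at $(B_0,A_0)$ is parabolic or the identity, then $P(\phi)-\phi$ has constant sign, vanishing only on the fixed-point set; shifting $B$ slightly in the appropriate direction therefore makes $P(\phi)-\phi$ nowhere zero, destroying all fixed points, so the shifted parameter lies outside every phase-lock area and $(B_0,A_0)$ cannot be interior. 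This single monotonicity argument is what must replace your analyticity/\cite{RK} step.
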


\begin{proof} A point $(B,A)$ belongs to a phase-lock area, if and only if the time $2\pi$ flow map of the vector field 
(\ref{josvec}), which acts as a diffeomorphism of the circle 
$S^1=S^1_{\phi}\times 0$, has a fixed point. 
The latter diffeomorphism is the restriction to $S^1=\partial D_1$ of a conformal automorphism of the disk $D_1$: 
the projectivized monodromy of system (\ref{tty}). A conformal automorphism of the disk $D_1$ represented by a 
M\"obius transformation with unimodular matrix has a fixed point in $\partial D_1$, if and only if the trace of the latter 
matrix has module at least two. 
This together with formula (\ref{trr}) proves the first statement of the proposition. The module of the trace equals two, 
if and only if the M\"obius transformation under question is either identity, or parabolic: 
has a unique fixed point in $\partial D_1$. The M\"obius transformation has one of the two 
above types, if and only if the point $(B,A)$ lies in the boundary of a phase-lock area. This follows from strict 
monotonicity of the time $2\pi$ flow map of the vector field (\ref{josvec}) as a function of the parameter $B$. Namely, if the time $2\pi$ flow map $S^1\to S^1$ 
is either identity, or parabolic, 
then slightly deforming $B$ to the right or to the left, one can destroy all its fixed points in 
$S^1=\partial D_1$  and thus, go out of the 
phase-lock area. Conversely, if the point $(B,A)$ lies in the boundary of a phase-lock 
area, then the corresponding above time $2\pi$ flow map is either identity, or parabolic, since 
it is M\"obius and can have obviously 
neither attracting, nor repelling fixed points. Proposition \ref{ineql} is proved.
\end{proof}

\subsection{Behavior of phase-lock areas near constrictions. Proof of Theorems \ref{locon} and \ref{intcon}}
Recall that without loss of generality we consider that $B\geq0$, $A>0$. 

\begin{proof} {\bf of Theorem \ref{locon}.} The monodromy matrix, the matrix $\wt M$, see (\ref{wtm}), and 
\and the Stokes multipliers $c_0$, $c_1$ 
are analytic functions of the parameters $(B,A)$ with $A\neq0$. Fix a constriction $(B_0,A_0)$. Then 
$l=\frac{B_0}{\omega}\in\zz_{\geq0}$, and $c_0(B_0,A_0)=c_1(B_0,A_0)=0$, by Theorem \ref{thec}. Consider  the restrictions of the Stokes multiplier functions to the open ray $B_0\times \rr_+$: 
$$s_j(t):=c_j(B_0,A_0+t),  \ j=0,1, \ t>-A_0; \ s_j(0)=0.$$

{\bf Claim 1.} {\it The difference  $|tr\wt M(B_0,A_0+t)|-2$ is non-zero and has one and the same sign for all $t\neq0$  small enough.}

\begin{proof} The trace under question equals 
$$tr\wt M(B_0,A_0+t)=(-1)^l(2+s_0(t)s_1(t)).$$
The  functions $s_j(t)$ are analytic on the interval $(-A_0,+\infty)$ and do not vanish identically: they may vanish 
only for those $t$, for which $(B_0,A_0+t)$  is a point of intersection of the line $B_0\times\rr$ with 
boundaries of phase-lock areas (Theorems \ref{thec}, \ref{thpol} and \ref{thpol2}). The set of the latter intersection points is discrete, since the boundary of each phase-lock area is the 
graph of a non-constant analytic function $A=g(B)$. Consider the coefficients $a$, $b$, $c$ in the matrix 
$Q=Q(B_0,A_0+t)$, see (\ref{qform}), which are also analytic  functions in $t$.  
In our case, when $l\in\zz$, the second relation in (\ref{quad}) 
with $c_j$ replaced by $s_j(t)$ takes the form 
$$a(t)s_0(t)s_1(t)=b(t)s_1(t)-c(t)s_0(t), \ b(0),c(0)\neq0,$$
by Theorem \ref{tconn}. 
This implies that 
\begin{equation}s_1(t)\simeq\frac {c(0)}{b(0)}s_0(t), \text{ as } t\to0,\label{as01}\end{equation}
$$s_0(t)s_1(t)\simeq\frac{c(0)}{b(0)}s_0^2(t)\simeq qt^{2n}, q\in\rr, \ q\neq0, \ \sign q=\sign\frac{c(0)}{b(0)},$$
\begin{equation}(-1)^ltr\wt M(B_0,A_0+t)=2+qt^{2n}(1+o(1)).\label{astr}\end{equation}
The  latter right-hand side is greater (less) than 2 for all $t\neq0$ small enough, if $q>0$ (respectively, 
$q<0$). This proves the claim.
\end{proof}

There exists a $\var>0$ such that the punctured 
interval $B_0\times([A_0-\var,A_0+\var]\setminus\{ A_0\})$ either lies entirely 
in the interior of a phase-lock area, or lies outside the union of the phase-lock areas. This follows from Proposition \ref{ineql} and 
the above claim. Theorem \ref{locon} is proved.
\end{proof}

Recall that for $r\in\zz$ by  $L_r$ and $\La_r=\{ B=\omega r\}$  we denote respectively the $r$-th phase-lock area and its axis. 
 In the proof of Theorem \ref{intcon} we use the following immediate corollary of the main result of \cite{RK}.

\begin{proposition} \label{33} For every $\omega>0$ and every $r\in\zz_{\geq0}$ there exists an $A\in\rr_+$ that can be chosen 
arbitrarily large such that $(\omega r,A)\in Int(L_r)$. 
\end{proposition}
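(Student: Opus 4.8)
The plan is to read the result off directly from the Bessel asymptotics (\ref{bessas}) of the boundary curves $B=g_{r,\pm}(A)$ of $L_r$, which were established in \cite{RK} and form (together with the monotonicity statement) the main result of that paper. Once (\ref{bessas}) is granted, the remaining argument is a purely elementary comparison of orders of magnitude.

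First I would record the interior criterion in terms of the boundary functions. By the known facts 2) and 4) recalled above, for a height $A=s$ that is not a constriction or a growth point the horizontal cross-section $\{B:(B,s)\in L_r\}$ is exactly the closed interval with endpoints $g_{r,-}(s)$ and $g_{r,+}(s)$, and its interior is the corresponding open interval. Hence $(\omega r,s)\in Int(L_r)$ if and only if $\omega r$ lies strictly between $g_{r,-}(s)$ and $g_{r,+}(s)$. Substituting (\ref{bessas}) and writing $J:=J_r(-\frac s\omega)$ gives
\[
g_{r,-}(s)-\omega r=-J+O(\tfrac{\ln s}{s}),\qquad g_{r,+}(s)-\omega r=J+O(\tfrac{\ln s}{s}),
\]
so the midpoint of the two boundary values is $\omega r+O(\frac{\ln s}{s})$ while the half-width of the segment is $|J|(1+o(1))$. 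Therefore $\omega r$ lies strictly inside as soon as $|J_r(-\frac s\omega)|$ is of strictly larger order than $\frac{\ln s}{s}$.

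Finally I would exploit the oscillation and decay rate of the Bessel function. By the classical large-argument expansion $J_r(x)=\sqrt{2/(\pi x)}\,(\cos(x-\frac{r\pi}{2}-\frac\pi4)+O(x^{-1}))$, the function $s\mapsto J_r(-\frac s\omega)$ has a sequence of local extrema $s_k\to+\infty$ at which $|J_r(-\frac{s_k}\omega)|\geq \kappa\, s_k^{-1/2}$ for some $\kappa>0$ and all large $k$. Since $s_k^{-1/2}$ is of strictly larger order than $\frac{\ln s_k}{s_k}$, the inequality of the previous step holds at every such $s_k$ with $k$ large, and each $s_k$ automatically lies away from the constrictions and growth points (where the width degenerates near the zeros of $J_r$). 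Choosing $A=s_k$ with $k$ arbitrarily large then produces arbitrarily large ordinates with $(\omega r,A)\in Int(L_r)$, as asserted; the same argument applies for $r=0$, where $J_0$ is even and still has peaks of order $s^{-1/2}$.

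The only delicate point is this comparison of orders: the remainder in (\ref{bessas}) decays like $\frac{\ln s}{s}$, whereas the amplitude of the Bessel oscillations decays only like $s^{-1/2}$, so the Bessel term strictly dominates at its peaks. I expect this order comparison to be the single place where the precise form of the remainder in (\ref{bessas}) is genuinely needed, and no further estimate is required.
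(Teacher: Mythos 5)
Your proposal is correct and follows essentially the same route as the paper: both read the claim off the Bessel asymptotics (\ref{bessas}) from \cite{RK}, pick a sequence of peaks of the oscillating Bessel term where its amplitude is of order $s^{-1/2}$, and note that this strictly dominates the $O(\frac{\ln s}{s})$ remainder, so that $g_{r,-}(s)<\omega r<g_{r,+}(s)$ there and the point lies in $Int(L_r)$. The paper phrases this via the functions $h_{\pm}(A)=g_{r,\pm}(A)-r\omega$ having opposite signs at local maxima $m_k$ of $h_+$, which is exactly your midpoint/half-width comparison.
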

\begin{proof} The boundary $\partial L_r$ is a union of two 
  graphs of functions $g_{r,\pm}(A)$ 
  with Bessel asymptotics, by the main resulf of \cite{RK}, see formula (\ref{bessas}) at the 
  beginning of the present paper. The functions $h_{\pm}(A)=g_{r,\pm}(A)-r\omega$ tend to zero roughly like sine function multiplied by 
  $A^{-\frac12}$, and $h_{\pm}(A)=-h_{\mp}(A)+o(A^{-\frac12})$, as $A\to\infty$. In particular, there  
  exists a sequence of local maxima $m_k$  of the function $h_{+}$, $m_k\to+\infty$, as $k\to\infty$, such that 
  the values $h_+(m_k)$  are positive  and asymptotically  equivalent to 
  $m_k^{-\frac12}$ times a positive constant  factor. Thus, for 
  every $k$ large enough the values $h_{\pm}(m_k)$ are non-zero and have different 
  signs. This implies that 
   $(\omega r,m_k)\in Int(L_r)$. The proposition is proved.
  \end{proof}
  
\begin{proof} {\bf of Theorem \ref{intcon}.}  Let $r\in\nn$. 
Let $A\in\rr_+$ be such that $P=(r\omega,A)\in  Int(L_r)$, and let $A$ be bigger than the ordinate of every simple intersection in 
$\Lambda_r=\{ B=r\omega\}$ (if any):  
it exists by Proposition \ref{33}. 
Let $I\subset\Lambda_r$ denote the maximal interval containing  $P$ that is contained in 
$L_r$. 

{\bf Claim 2.} {\it The interval $I$ is semi-infinite and bounded from below by a simple 
intersection. Simple intersections do not lie in the $B$-axis.}

\begin{proof}  Suppose the contrary to the first statement of the claim. 
This means that either $I\neq\Lambda_r$ and 
$I$ has a boundary point $E\in\partial L_r$ that is not a simple intersection, or $I=\Lambda_r$. This alternative follows from the fact that 
the interval $I$ cannot be bounded by a simple intersection from above, 
by definition. 
The second case, when $I=\Lambda_r$, is impossible, since  the intersection 
$L_r\cap\{ A=0\}$ 
is one point with abscissa $\sqrt{r^2\omega^2+1}>r\omega$ (see \cite[section 3]{IRF} and  
 \cite[corollary 3]{buch1}),  
which does not lie in $\Lambda_r$.  The latter statement also implies that 
if a simple intersection in $\Lambda_r$ exists, 
it does not lie in the $B$-axis. 
Therefore, the first case takes place. The boundary 
point $E$, which is not a simple intersection, is a constriction, by definition. It is a positive 
constriction, by Theorem \ref{locon} and since $E$ is adjacent to the interval $I\subset L_r$. Therefore, 
a small segment $J$ of the line $\{ B=\omega r\}$ adjacent to $E$ from the other  side is also contained in $L_r$. 
Finally, $I\cup J\subset L_r$, -- a contradiction to the maximality of the interval $I$. The 
claim is proved.
\end{proof}

Claim 2 implies the inclusion $Sr\subset I\subset L_r$ and hence, Theorem \ref{intcon}.
\end{proof}

\section{Open problems and relations between conjectures}

\begin{proposition} Conjecture \ref{coinc} implies Conjectures \ref{cocon} and \ref{right}. Conjecture \ref{right}  implies Conjectures \ref{garl}. 
\end{proposition}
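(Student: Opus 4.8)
The plan is to establish the three implications in the order $\ref{coinc}\Rightarrow\ref{right}$, then $\ref{right}\Rightarrow\ref{garl}$, and finally $\ref{coinc}\Rightarrow\ref{cocon}$ (the last using $\ref{right}$, which is then already available). The whole argument rests on a few standard ingredients: the monotonicity of the rotation number $\rho(B,A)$ in $B$ for fixed $A$ (a consequence of the monotonicity in $B$ of the time-$2\pi$ flow map of (\ref{josvec}) used in Proposition \ref{ineql}); the fact that the areas $L_r=\rho^{-1}(r)$ are pairwise disjoint closed level sets, so their horizontal slices are ordered by $r$; the two-graph description $\partial L_r=\{B=g_{r,\pm}(A)\}$; the abscissa restriction of Remark \ref{ssylka-4}; and the explicit growth point $L_r\cap\{A=0\}=\{(\sqrt{r^2\omega^2+1},0)\}$, which lies strictly to the right of $\La_r$. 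Throughout I treat $r\in\nn$, the case of negative $r$ following from the symmetry $L_r\leftrightarrow L_{-r}$.

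The core is $\ref{coinc}\Rightarrow\ref{right}$, which I would prove by induction on $r\in\nn$. The base $r=1$ holds because $\La_0\subset L_0$ makes $\rho\equiv0$ on $\{B=0\}$, so by monotonicity no point with $\rho=1$ can have $B\le0$. For the step, assume $B|_{L_{r-1}}>(r-2)\omega$. Together with Remark \ref{ssylka-4} this forces $l=r-1$ for every constriction of $L_{r-1}$, so all of them lie on $\La_{r-1}$; by \ref{coinc} they lie on $L_{r-1}^+\cap\La_{r-1}$, the ray above $\mcp_{r-1}$. Hence $L_{r-1}$ has no constriction below $\mcp_{r-1}$, so for $0<A<A(\mcp_{r-1})$ its slice is a single interval $[g_{r-1,-}(A),g_{r-1,+}(A)]$ not containing $(r-1)\omega$ (by \ref{coinc}); since near $A=0$ this interval lies to the right of $\La_{r-1}$ (at the growth point), continuity forces $g_{r-1,-}(A)>(r-1)\omega$ throughout. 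Consequently $\rho((r-1)\omega,A)\le r-1$ for every $A>0$: it equals $r-1$ for $A\ge A(\mcp_{r-1})$ (where $\La_{r-1}$ meets $L_{r-1}$) and is $<r-1$ below, where $\La_{r-1}$ lies strictly to the left of $L_{r-1}$. If some $(B,A)\in L_r$ had $A>0$ and $B\le(r-1)\omega$, monotonicity would give $\rho((r-1)\omega,A)\ge\rho(B,A)=r$, contradicting $\rho((r-1)\omega,A)\le r-1$; the case $A=0$ is excluded by the growth point of $L_r$. This is $\ref{right}$ for $L_r$, closing the induction.

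The implication $\ref{right}\Rightarrow\ref{garl}$ is then short: $B|_{L_r}>(r-1)\omega$ together with Remark \ref{ssylka-4} forces $l=r$ for every constriction, so all constrictions of $L_r$ lie on $\La_r$; inserting this into the known garland structure (each $L_r$ is a vertical chain of bounded domains separated by single points, those off the $B$-axis being constrictions) and the symmetry about the $B$-axis yields exactly that $L_r^+$ is a garland whose separating constrictions $\mca_{r,1},\mca_{r,2},\dots$ lie on $\La_r$ and are ordered by ordinate. For $\ref{coinc}\Rightarrow\ref{cocon}$ I use that every constriction $C$ of $L_r$ now lies on $\La_r$, hence on the ray $Sr$ above $\mcp_r$; by Theorem \ref{intcon} and \ref{coinc} one has $Sr=L_r^+\cap\La_r\subset L_r$, and the only boundary points of $L_r$ along $Sr$ are the isolated constrictions, so a sufficiently small punctured vertical neighborhood of $C$ lies in $\mathrm{Int}(L_r)$. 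The dichotomy of Theorem \ref{locon} then makes $C$ positive, which is $\ref{cocon}$.

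The main obstacle is the inductive step of $\ref{coinc}\Rightarrow\ref{right}$, and it reflects a genuine tension: \ref{coinc} is purely \emph{axial} information (it describes only $L_{r-1}\cap\La_{r-1}$), whereas \ref{right} is a \emph{global} statement about the whole area. The delicate point is excluding a component of $L_{r-1}$ that sits strictly to the left of its own axis $\La_{r-1}$ inside the strip $\{(r-2)\omega<B<(r-1)\omega\}$ at heights below $\mcp_{r-1}$: such a component would be fully compatible with $\ref{right}$ for $L_{r-1}$, yet it would push $\rho$ above $r-1$ somewhere on $\La_{r-1}$ and break the contradiction. The resolution sketched above is that any such component would have to be pinched off by constrictions, which by the inductive hypothesis (via Remark \ref{ssylka-4}) lie on $\La_{r-1}$ and by \ref{coinc} lie above $\mcp_{r-1}$ — incompatible with the component lying below $\mcp_{r-1}$ and off the axis. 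Turning this topological exclusion, i.e.\ recovering the global shape of the garland from the local data in \ref{coinc} and the abscissa bound, into a rigorous argument is where the real work lies.
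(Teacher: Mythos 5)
Your proof is correct, and for the central implication Conjecture \ref{coinc} $\Rightarrow$ Conjecture \ref{right} it takes a genuinely different route from the paper's. Both arguments share the first step: Conjecture \ref{coinc} together with the growth point $(\sqrt{(r-1)^2\omega^2+1},0)$, which lies strictly to the right of $\La_{r-1}$, forces the part of $L_{r-1}$ below $\mcp_{r-1}$ to lie strictly to the right of its axis. From there the paper argues topologically: the right boundary curve $\{B=g_{r-1,+}(A)\}$ of $L_{r-1}^+$ is a connected curve going from the growth point to infinity, lying (weakly) to the right of $\La_{r-1}$, and it separates $\La_{r-1}$ from every $L_k^+$ with $k>r-1$ --- a claim the paper states rather informally. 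You instead argue analytically via monotonicity of $\rho$ in $B$: once the slices of $L_{r-1}$ below $\mcp_{r-1}$ lie strictly to the right of the axis, you get $\rho((r-1)\omega,A)\le r-1$ for all $A>0$, so no point of $L_r=\{\rho=r\}$ can have $B\le(r-1)\omega$; this is more elementary and makes rigorous exactly what the paper's separation statement is used for (note that the paper's separation claim itself implicitly rests on the same monotonicity). The other two implications --- \ref{right} $\Rightarrow$ \ref{garl} via the abscissa classification of Remark \ref{ssylka-4}, and \ref{coinc} $\Rightarrow$ \ref{cocon} by placing positive-ordinate constrictions on $Sr\subset L_r$ and using isolatedness of boundary points of $L_r$ along that ray --- coincide with the paper's.

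Two further remarks. First, your induction on $r$ is superfluous: the hypothesis $B|_{L_{r-1}}>(r-2)\omega$ is used only (via Remark \ref{ssylka-4}) to place the constrictions of $L_{r-1}$ on $\La_{r-1}$, but that information is never needed --- the horizontal slice of $L_{r-1}$ at any height is the single interval $[g_{r-1,-}(A),g_{r-1,+}(A)]$ by statement 2) of the Introduction (the two-graph description of $\partial L_{r-1}$) regardless of where the constrictions sit, and Conjecture \ref{coinc} alone guarantees this interval avoids $(r-1)\omega$ for $0<A<A(\mcp_{r-1})$. Second, and for the same reason, the doubt in your closing paragraph is unfounded: a piece of $L_{r-1}$ lying strictly to the left of $\La_{r-1}$ at heights below $\mcp_{r-1}$ is excluded precisely by your own intermediate-value argument, since slices are intervals avoiding $(r-1)\omega$ there, they lie to the right near $A=0$, and $L_{r-1}$ being closed, a change of side would produce a point of $\partial L_{r-1}\cap\La_{r-1}$ with ordinate in $(0,A(\mcp_{r-1}))$, contradicting \ref{coinc}. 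So no extra work is required. The only genuine (minor) omission is the case $r=0$ of Conjecture \ref{cocon}, which your symmetry reduction $L_r\leftrightarrow L_{-r}$ does not cover; as in the paper, it follows from the symmetry of $L_0$ about its axis $\La_0\subset L_0$ together with Remark \ref{ssylka-4}.
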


\begin{proof} \
Let us show that Conjecture \ref{right} implies Conjecture \ref{garl}.
Conjecture \ref{right} implies that for every $r\in\nn$ all the constrictions of  the phase-lock 
area $L_r$ have abscissas greater than $(r-1)\omega$. The latter abscissas are equal to $\omega l$, $l\in\zz\cap[0,r]$, 
see \cite{4}. 
Hence, all of them are equal to $\omega r$ and thus,  Conjecture \ref{garl} holds. 

Now let us show that Conjecture \ref{right} follows from Conjecture \ref{coinc}. The equality $L_r^+\cap \La_r=Sr$ 
given by Conjecture \ref{coinc}  implies that 
the set $L_r^+\cap\{ A<A(\mcp_r)\}$ lies on one side from the axis  $\La_r$: either on the right, or on the left. 
It should lie on the right, since the intersection of the phase-lock area $L_r$ with the $B$-axis is just one growth  point 
with the abscissa $B_r=\sqrt{r^2\omega^2+1}>\omega r$ (see  \cite[section 3]{IRF} and \cite[corollary 3]{buch1}), 
hence lying on the right. Finally, the right boundary of the upper phase-lock area $L_r^+$ is a connected curve issued from the point $B_r$ to infinity that lies 
on the right from the axis $\La_r$, meets $\Lambda_r$ at constrictions and 
$\mcp_r$  and separates $\La_r$ from the other upper phase-lock areas $L_k^+$, $k>r$. 
Applying this statement to $r$ replaced by $r-1$ and taking into account symmetry and that $L_0$ contains the $A$-axis 
(by symmetry), we get that $L_r$ lies on the right from the axis $\La_{r-1}$. This proves 
Conjecture \ref{right}. 

Now let us prove that Conjecture \ref{cocon} follows from Conjecture \ref{coinc}. Withour loss of generality 
we treat only the case of 
phase-lock areas $L_r$ with $r\in\nn$ (by symmetry and positivity 
of all the constrictions in $L_0$, which also follows from symmetry). 
 By Conjecture \ref{coinc} and 
the implications proved above, Conjecture \ref{garl} holds: for every $r\in\nn$ all the constrictions 
of the phase-lock area $L_r$ lie in $\La_r$. Hence, those of them with $A>0$ lie in $Sr=L_r^+\cap\La_r$, 
and thus, all of them are 
automatically positive, by connectivity of the ray $Sr$. This together with symmetry implies Conjecture \ref{cocon}. 
 This finishes the proof of the proposition.
\end{proof}

\begin{proposition} \label{reality} For every constriction $(B,A)\in\rr_{\geq0}\times\rr_+$, 
the  ratio  of the triangular 
terms in the corresponding transition matrix $Q$ from (\ref{qform}) is always real. 
\end{proposition}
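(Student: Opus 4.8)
The plan is to reduce the statement to showing that the number $\frac{c}{b}$ is real at the constriction, where $b$ and $c$ are the entries appearing in the normal form (\ref{qform}) of $Q$; since the two triangular (off-diagonal) terms of $Q$ are exactly $b$ and $-c$, their ratio $-\frac{b}{c}$ is then automatically real as well. The ratio is well-defined to begin with: at a constriction $(B,A)\in\rr_{\geq0}\times\rr_+$ one has $l=\frac B\omega\in\zz_{\geq0}$, and $b,c\neq0$ by Lemma \ref{lkey}. The whole argument will be a local one along the vertical line through the constriction, combining the reality of the Stokes multipliers (Theorem \ref{tre}) with the asymptotic relation already extracted in the proof of Theorem \ref{locon}.

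First I would fix a constriction $(B_0,A_0)$, set $l=\frac{B_0}{\omega}\in\zz_{\geq0}$, and restrict all the relevant functions to the vertical ray $B_0\times\rr_+$ through it, writing $s_j(t)=c_j(B_0,A_0+t)$ and $a(t),b(t),c(t)$ for the entries of $Q(B_0,A_0+t)$, exactly as in the proof of Theorem \ref{locon}. Along this ray the abscissa is constant, so $l$ remains the same integer throughout; hence Theorem \ref{tre} applies at every point of the ray and guarantees that both Stokes multipliers $s_0(t)$ and $s_1(t)$ are real for all admissible $t$. This is the single structural input that makes the target ratio real.

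Next I would invoke the second relation of (\ref{quad}) with $l\in\zz$, so that $e^{\pm2\pi i l}=1$; solving it for $s_1(t)$ gives
\begin{equation*}
s_1(t)=\frac{c(t)\,s_0(t)}{b(t)-a(t)\,s_0(t)},
\end{equation*}
and therefore $\frac{s_1(t)}{s_0(t)}\to\frac{c(0)}{b(0)}$ as $t\to0$, using $b(0),c(0)\neq0$ and $s_0(t)\to0$. This is precisely the asymptotic (\ref{as01}). Since $s_0$ is analytic on the ray and does not vanish identically there — its zeros are among the discrete intersection points of $B_0\times\rr$ with the boundaries of phase-lock areas, as noted in the proof of Theorem \ref{locon} — the quotient $\frac{s_1(t)}{s_0(t)}$ is a genuine ratio of two real numbers for all $t\neq0$ small enough, hence real; its limit $\frac{c(0)}{b(0)}=\frac{c(B_0,A_0)}{b(B_0,A_0)}$ is consequently real, which is the claim.

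The only step demanding care is the passage to the limit, and it is exactly the point that has already been settled inside the proof of Theorem \ref{locon}: one needs $s_0\not\equiv0$ near the constriction so that the real quotients $\frac{s_1(t)}{s_0(t)}$ are defined along a sequence $t\to0$ and so that their common limit indeed equals $\frac{c(0)}{b(0)}$. Granting this non-vanishing and the limit identity (both available from that earlier proof) together with the reality supplied by Theorem \ref{tre}, no new analytic machinery is required; the reality of the ratio of the off-diagonal terms of $Q$ follows at once.
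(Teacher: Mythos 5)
Your proof is correct and is essentially the paper's own argument: the paper deduces Proposition \ref{reality} precisely from formula (\ref{as01}) (which, as in your write-up, comes from the second relation in (\ref{quad}) specialized to $l\in\zz$, with $b(0),c(0)\neq0$ supplied by Lemma \ref{lkey}) together with the reality of the Stokes multipliers for integer $l$ (Theorem \ref{tre}). The only thing you add is the explicit justification of the limit passage through the discrete set of zeros of $s_0$, which is exactly the reasoning implicit in the paper's proof of Theorem \ref{locon}.
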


The proposition follows from formula (\ref{as01}) and reality of the Stokes multipliers of system (\ref{tty}) for $l\in\zz$ 
(Theorem \ref{tre}). 

\begin{conjecture} \label{posit} For every constriction $(B,A)\in\rr_{\geq0}\times\rr_+$  the ratio of the above triangular terms  is negative, that is, $\frac cb>0$. 
\end{conjecture}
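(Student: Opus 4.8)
The plan is to use the symmetries already present in the paper to turn Conjecture \ref{posit} into a single real inequality on one entry of the involution $Q$, and then to attack that inequality. First I would record how the conjugation symmetry $J$ from (\ref{ssymm}) interacts with $\bbi$. A direct computation (the coefficients involved are real or lie in $i\rr$, and $J$, $\bbi$ evaluate the components at $\bar z$ and at $z^{-1}$ respectively) shows that $J$ and $\bbi$ commute. At a constriction the Stokes matrices are trivial (Theorem \ref{thec}), so $f_{1+}=f_{1-}=:f_1$ and $f_{2+}=f_{2-}=:f_2$, and Proposition \ref{pmp} gives $Jf_1=-f_1$, $Jf_2=f_2$; since $g_1=\bbi f_1$, $g_2=\bbi f_2$ and $J\bbi=\bbi J$, the same relations $Jg_1=-g_1$, $Jg_2=g_2$ hold. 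Writing $(f_1,f_2)=(g_1,g_2)Q$ with $Q$ as in (\ref{qform}) and applying the antilinear operator $J$ to both columns forces
\[a=q_{22}\in\rr,\qquad b,c\in i\rr\]
(this simultaneously recovers Theorem \ref{triq} and Proposition \ref{reality}). Together with $\det Q=-1$, equivalently $bc=a^2-1$ (the first relation in (\ref{quad})), and writing $b=i\beta$, $c=i\gamma$ with $\beta,\gamma\in\rr$, this gives $\beta\gamma=1-a^2$ and $\tfrac cb=\tfrac\gamma\beta$. Since $b,c\neq0$ at a constriction (Lemma \ref{lkey}), we have $\beta,\gamma\neq0$, so Conjecture \ref{posit}, namely $\tfrac cb>0$, is \emph{exactly equivalent} to the real inequality $|q_{22}|=|a|<1$. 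Note that the borderline $|a|=1$ is already excluded, since it would force $\beta\gamma=0$.

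The second step is to make the single real number $a$ computable. Because $\operatorname{tr}Q=0$ and $Q^2=Id$, the eigenvalues of $Q$ are $+1$ and $-1$, so $a$ is the $(2,2)$-entry of a fixed involution and $|a|<1$ is a genuine constraint, not an automatic one. From $f_2=bg_1+ag_2$, and since any determinant $\det(p\mid q)$ of two solutions of (\ref{tty}) satisfies the same first-order linear ODE (so ratios of such determinants are $z$-independent), one obtains
\[a=\frac{\det(f_2\mid g_1)}{\det(g_2\mid g_1)},\]
where $f_2$ is, up to a constant, the distinguished solution whose second component yields the entire solution $E_2$ of the Heun equation (\ref{heun}) that exists at a constriction, $g_2=\bbi f_2$ corresponds to $\hat E_2=\#E_2$, and $g_1=\bbi f_1$. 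Evaluating these determinants on the fixed locus $\rr_+$ of $z\mapsto z^{-1}$ and inserting the explicit form (\ref{diez}) of $\bbi$ should express $a$ as an explicit bilinear pairing of the entire solution $E_2$ with its image $\#E_2$.

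The hard part is the third step, the inequality $|a|<1$ itself. The symmetries only force $a$ to be real; they do not control its size, so a genuinely new ingredient is required, and this is precisely the open core of the problem: $|a|<1$ means the phase-lock area extends \emph{vertically} through the constriction (the constriction being positive), while $|a|>1$ is exactly a negative constriction, and by the discussion of Section 4 the inequality implies Conjectures \ref{garl} and \ref{garl1}. Two routes seem plausible. The analytic route is to manufacture a positivity: to write $1-a^2=\beta\gamma$ as a positive multiple of a squared norm or a sign-definite Wronskian built from $E_2$, using the antilinearity of $J$ to supply both the reality and the sign. The global route is to regard $a=a(B,A)$ as a real-analytic function along the pinching locus, fix its size for large ordinate from the Bessel asymptotics (\ref{bessas}) of $\partial L_r$, and then propagate $|a|<1$ by a continuity or degree argument, the crossing value $|a|=1$ being unreachable by the remark above. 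I expect the sign-definiteness in the analytic route to be the true obstacle: producing an honest positive quantity equal to $1-a^2$ is tantamount to excluding negative constrictions outright, which is the whole content of the conjecture.
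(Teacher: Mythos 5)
The statement you are asked to prove is Conjecture \ref{posit}: the paper itself does not prove it --- it is stated as an open conjecture, and the paper only establishes the reality of the ratio $\frac cb$ at constrictions (Proposition \ref{reality}), the non-vanishing $b,c\neq 0$ there (Lemma \ref{lkey}), and the implications the conjecture would have (Proposition \ref{cb0} and Section 4). Your first two steps are correct and consistent with the paper: the commutation of $J$ and $\bbi$ holds for $l\in\zz$; at a constriction the triviality of the Stokes data (Theorem \ref{thec}) makes all solutions single-valued, Proposition \ref{pmp} then gives $Jf_1=-f_1$, $Jf_2=f_2$, and applying the antilinear $J$ to $f_1=-ag_1-cg_2$, $f_2=bg_1+ag_2$ forces $a\in\rr$, $b,c\in i\rr$; together with $bc=a^2-1$ and $b,c\neq0$ this makes $\frac cb>0$ equivalent to $|a|<1$. (Your parenthetical claim that this ``recovers Theorem \ref{triq}'' overstates it: that theorem asserts $b\in i\rr$ for \emph{all} $\mu>0$ with $l\in\zz$, whereas your symmetry argument uses triviality of the Stokes matrices and therefore applies only at constrictions.)

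The genuine gap is your third step, which is where the proof would have to happen and which you do not carry out --- as you yourself acknowledge. Neither of your two routes is more than a program. The ``analytic route'' (exhibit $1-a^2$ as a manifestly positive quantity) is exactly the open problem restated: the symmetries supply reality but no sign. The ``global route'' has concrete unaddressed obstructions: the reality of $a$ and the equivalence $\frac cb>0\iff|a|<1$ hold only on the constriction locus, which for fixed $\omega$ is a discrete set, so no continuity or degree argument is available in the $(B,A)$-plane; to propagate along a family you would need to work in $(B,A,\omega)$-space, prove that the relevant set of constrictions is connected (or treat every component), and verify the inequality at a base point of each component --- for instance from asymptotics of the transition matrix $Q$ at constrictions of large ordinate, which is established neither in the paper nor by you (the Bessel asymptotics (\ref{bessas}) describe the boundary curves $\partial L_r$, not $Q$). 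Your remark that the crossing value $|a|=1$ is ``unreachable'' excludes it only \emph{at} constrictions and cannot substitute for these missing ingredients. In sum, your proposal is a correct and clean reduction of the conjecture to the single real inequality $|a|<1$ at constrictions, but it is not a proof --- consistent with the fact that the statement remains an open conjecture in the paper.
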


\begin{proposition} \label{cb0} Every constriction in $\rr_{\geq0}\times\rr_+$ with $\frac cb>0$ is positive.
\end{proposition}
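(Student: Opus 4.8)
The plan is to read off the conclusion directly from the analysis already carried out in the proof of Theorem \ref{locon}, specialized to the sign hypothesis $\frac cb>0$. Fix a constriction $(B_0,A_0)\in\rr_{\geq0}\times\rr_+$. By Theorem \ref{thec} one has $l=\frac{B_0}\omega\in\zz_{\geq0}$, and by statement (\ref{inbc}) of Theorem \ref{tconn} the coefficients $b,c$ of the transition matrix $Q(B_0,A_0)$ are both non-zero, so the ratio $\frac cb$ is a well-defined real number (Proposition \ref{reality}). Thus the hypothesis $\frac cb>0$ is meaningful at the constriction.

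First I would restrict the unimodular monodromy $\wt M$ to the vertical line $\{B=B_0\}$ and set $t=A-A_0$. The key input is the expansion (\ref{astr}) established inside the proof of Theorem \ref{locon}, namely
$$(-1)^l\,tr\wt M(B_0,A_0+t)=2+qt^{2n}(1+o(1)),\quad q\neq0,\ \sign q=\sign\frac{c}{b},$$
which is obtained from the second relation in (\ref{quad}) (with $c_0,c_1$ replaced by the restricted Stokes multipliers $s_0(t),s_1(t)$) together with the asymptotic equivalence (\ref{as01}). The essential point is that the sign of the leading coefficient $q$, which decides whether the trace of $\wt M$ exceeds $2$ in absolute value or falls below it near the constriction, is exactly the sign of $\frac cb$.

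Next I would invoke the hypothesis $\frac cb>0$, which forces $q>0$. Then $(-1)^l\,tr\wt M(B_0,A_0+t)>2$ for all sufficiently small $t\neq0$, and since this quantity is itself positive (close to $+2$), we obtain the strict inequality $|tr\wt M(B_0,A_0+t)|>2$ on a punctured neighborhood of $t=0$. By Proposition \ref{ineql}, strictness of this inequality means precisely that $(B_0,A_0+t)$ lies in the interior of a phase-lock area for every small $t\neq0$. Hence, by the dichotomy in Theorem \ref{locon}, the punctured interval lies entirely in the interior of a phase-lock area, which is the definition of the constriction being positive.

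I do not expect a serious obstacle here, since the entire analytic content is already packaged in the expansion (\ref{astr}); the statement is a short deduction from it. The only delicate point is the bookkeeping of signs: one must confirm that $\sign q=\sign\frac cb$ (and not the opposite) and that $(-1)^l\,tr\wt M>2$ genuinely yields $|tr\wt M|>2$ rather than the spurious alternative $tr\wt M<-2$. Both are immediate from the displayed expansion, whose right-hand side is close to $+2$ for small $t$, so no additional argument is required.
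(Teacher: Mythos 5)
Your proposal is correct and is essentially identical to the paper's own proof, which consists precisely of citing formulas (\ref{as01}), (\ref{astr}) and Proposition \ref{ineql}; you have simply unpacked that one-line citation, with the sign bookkeeping ($q>0$ forces $(-1)^l\,tr\wt M>2$, hence $|tr\wt M|>2$) handled correctly. No gaps.
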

The proposition follows from formulas (\ref{as01}), (\ref{astr}) and Proposition \ref{ineql}.

\begin{definition} A negative constriction $(B_0,A_0)$ is called {\it queer,} if the germ at $(B_0,A_0)$ of the interior of the corresponding 
phase-lock area lies on one side from the vertical line $\{ B=B_0\}$, and the origin $(0,0)$ lies on the different side.
\end{definition}

\begin{conjecture} \label{queer}  (see a stronger conjecture: \cite[conjecture 5.30]{bg2}). 
There are no queer constrictions. 
\end{conjecture}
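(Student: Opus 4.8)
The plan is to reduce Conjecture \ref{queer} to a local study of the real trace function near a constriction, continuing the analysis used in the proof of Theorem \ref{locon} but now in both parameters simultaneously. Fix a constriction $(B_0,A_0)$ with $B_0>0$ and $A_0>0$; the case $B_0=0$ is vacuous, since then the origin lies on the line $\{B=B_0\}$ itself and no germ can be on the ``different side''. Set $l_0=B_0/\omega\in\zz_{\geq0}$ and introduce local coordinates $\beta=B-B_0$, $\alpha=A-A_0$. Using (\ref{trr}) together with the identity $e^{\pi i l}=(-1)^{l_0}e^{i\pi\beta/\omega}$, the quantity $T(\beta,\alpha):=(-1)^{l_0}\,tr\wt M-2=2\cos(\pi\beta/\omega)-2+e^{i\pi\beta/\omega}c_0c_1$ is real-analytic and real-valued, and by Proposition \ref{ineql} the phase-lock area coincides near the constriction with $\{T\geq0\}$ and its interior with $\{T>0\}$. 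Since $c_0=c_1=0$ at the constriction (Theorem \ref{thec}), one checks that $T(0,0)=0$ and $\nabla T(0,0)=0$, so the constriction is a critical point of $T$.

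The heart of the argument is the quadratic part $T_2$ of $T$. I would compute $T_2=-\tfrac{\pi^2}{\omega^2}\beta^2+L_0L_1$, where $L_0=p_1\beta+q_1\alpha$ and $L_1=p_2\beta+q_2\alpha$ are the linear parts of $c_0,c_1$ at the constriction; reality of $T$ forces $L_0L_1$ to be a real quadratic form. On the axis $\{\beta=0\}$ the multipliers are real (Theorem \ref{tre}) and, by (\ref{as01}), $q_2=\tfrac cb\,q_1$, so that the $\alpha^2$-coefficient of $T_2$ equals $q_1q_2=\tfrac cb\,q_1^2$, which is \emph{negative} at a negative constriction because there $\tfrac cb<0$ (Propositions \ref{reality} and \ref{cb0}). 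Writing $a,2h,b'$ for the coefficients of $\beta^2,\beta\alpha,\alpha^2$ in $T_2$, a short computation yields the discriminant $ab'-h^2=-\tfrac{\pi^2}{\omega^2}q_1q_2-\tfrac14(p_1q_2-p_2q_1)^2$.

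The key point is that a constriction is a limit of interior points of the phase-lock area, since it separates two domains of the garland, each with nonempty interior; hence $T_2$ \emph{cannot} be negative definite. Combined with $b'=q_1q_2<0$, this forces $T_2$ to be indefinite (that is, $ab'-h^2<0$) unless it is degenerate. An indefinite real quadratic form has a positive cone consisting of two antipodal open sectors; being centrally symmetric and two-dimensional, this cone meets both half-planes $\{\beta>0\}$ and $\{\beta<0\}$. Therefore the germ of $\{T>0\}$ straddles the line $\{B=B_0\}$, the interior germ cannot lie on a single side of it, and the constriction is not queer. This settles the generic case, namely first-order vanishing of $c_0$ on the axis together with a nondegenerate $T_2$.

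The main obstacle is the degenerate stratum, which is precisely where a queer, one-sided (cuspidal) germ could in principle appear. Two degeneracies must be handled: higher-order vanishing of $c_0$ on the axis, in which case $q_1=0$, the $\alpha^2$-term disappears, and $T_2$ drops rank; and the balanced case $ab'=h^2$, i.e. $(p_1q_2-p_2q_1)^2=\tfrac{4\pi^2}{\omega^2}|q_1q_2|$, in which $T_2$ is negative semidefinite of rank one and $\{T>0\}$ accumulates at the constriction only tangentially to the null line of $T_2$, the side structure being decided by the cubic and higher jets of $T$. Controlling these jets requires understanding the \emph{transverse} (i.e. $\beta$-)dependence of the Stokes multipliers $c_0,c_1$, whereas Theorem \ref{tre} and (\ref{as01}) supply information only along the axis. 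Extracting this transverse information, presumably from the finer structure of the transition matrix $Q$ and the relation (\ref{quad}), is where I expect the genuine difficulty to lie, and is the reason the statement remains conjectural rather than a theorem.
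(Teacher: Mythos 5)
The statement you set out to prove is not a theorem of the paper: Conjecture \ref{queer} is left open there. The paper only positions it inside a chain of implications (Conjecture \ref{posit} implies Conjecture \ref{cocon}, which implies Conjecture \ref{queer}, which in turn implies Conjecture \ref{garl}, by Proposition \ref{impl}); no proof of Conjecture \ref{queer} itself is offered or claimed. So there is no ``paper proof'' to compare with, and your argument can only be judged on its own completeness --- which, by your own admission, it lacks.

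Your generic-case analysis does use the paper's machinery correctly and is essentially sound as far as it goes: locally the phase-lock area is $\{T\geq0\}$ with $T=(-1)^{l_0}\,tr\,\wt M-2$ real-analytic and real-valued (Proposition \ref{ineql} and reality of the trace), $T$ has a critical zero at the constriction since $c_0=c_1=0$ there (Theorem \ref{thec}), the $\alpha^2$-coefficient of the Hessian equals $q_1q_2=\frac cb\,q_1^2<0$ at a negative constriction with $q_1\neq0$ (by (\ref{as01}), statement (\ref{inbc}) of Theorem \ref{tconn}, and Propositions \ref{reality}, \ref{cb0}), negative definiteness of $T_2$ is excluded because a constriction is a limit of interior points of the garland, and an indefinite $T_2$ forces $\{T>0\}$ to meet both sides of $\{B=B_0\}$ by central symmetry of its positive cone. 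The genuine gap is exactly the one you flag: the degenerate strata. When $q_1=0$ (hence $q_2=0$ by (\ref{as01})), $T_2$ reduces to a multiple of $\beta^2$ with vertical null line, and one-sided germs are then perfectly possible --- e.g.\ $T=-\beta^2+\alpha^2\beta$ has $\{T>0\}=\{0<\beta<\alpha^2\}$; when $ab'=h^2$ with $b'<0$, cusps such as $\{(\alpha+\beta)^2<\beta^3\}\subset\{\beta>0\}$ show the same phenomenon. Nothing available in the paper --- reality of $c_0,c_1$ on the axis (Theorem \ref{tre}), the relations (\ref{quad}), nonvanishing of $b,c$ at constrictions --- controls the transverse ($\beta$-)jets of the Stokes multipliers needed to exclude these pictures. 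So what you have is a correct reduction of Conjecture \ref{queer} to the degenerate cases, i.e.\ a partial result and a plausible program, but not a proof; the conjecture remains open after your argument, just as it is in the paper.
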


\begin{proposition} \label{impl} Conjecture \ref{queer} implies Conjecture \ref{garl}.
\end{proposition}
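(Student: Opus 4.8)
The plan is to argue by contradiction and produce a queer constriction. By Remark \ref{ssylka-4} every constriction of $L_r$ with $A>0$ has abscissa $\omega l$ with $l\in\zz$, $l\equiv r\ (\mathrm{mod}\ 2)$ and $0\le l\le r$, so Conjecture \ref{garl} is equivalent to the assertion that only the value $l=r$ occurs; by the proposition proved just above it is in turn equivalent to Conjecture \ref{right}. Note first that $l=0$ is impossible for $r\neq0$, since $\La_0\subset L_0=\{\rho=0\}$ forces $\rho=0$ on $\La_0$, whence $\La_0\cap L_r=\emptyset$. I would therefore induct on $|r|$, the cases $r=\pm1,\pm2$ being settled in Remark \ref{ssylka-4}, assume Conjecture \ref{garl} for all smaller indices, and suppose that $L_r$ (with $r\ge3$) has a constriction $\mca=(\omega l,A_0)$ with $A_0>0$ and $1\le l\le r-2$. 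Such $\mca$ lies strictly to the left both of $\La_r$ and of $\La_{r-1}$, and the origin lies strictly to its left as well.

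The next step is the local analysis at $\mca$ furnished by Theorem \ref{locon}. If $\mca$ is negative and the germ of $\mathrm{Int}(L_r)$ at $\mca$ lies on one side of $\{B=\omega l\}$, then Conjecture \ref{queer} forces that side to be the left one, i.e. the two beads of $L_r$ adjacent to $\mca$ both bulge toward the origin; a negative constriction whose adjacent beads lie on opposite sides (a ``diagonal'' pinch) is a priori still allowed by Conjecture \ref{queer} and must be excluded separately. If $\mca$ is positive, a vertical subsegment of $\mathrm{Int}(L_r)$ runs through $\mca$ along $\La_l$; by the induction hypothesis the garland $L_l^+$ pinches on $\La_l$, and disjointness of the interiors of distinct phase-lock areas confines this subsegment to a gap of $L_l$, a configuration I would rule out by analysing $\rho$ along $\La_l$. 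Throughout, Proposition \ref{cb0} together with reality of the ratio $\frac cb$ (Proposition \ref{reality}) is available: a negative, hence a queer, constriction necessarily has $\frac cb<0$ in the notation of (\ref{qform}).

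Finally I would run a global connectivity argument, detecting membership in $L_r$ through the trace criterion of Proposition \ref{ineql}. The garland $L_r^+$ is connected; it meets the $B$-axis only at the growth point $B_r=\sqrt{r^2\omega^2+1}>r\omega$, hence lies to the right of $\La_{r-1}$ near the $B$-axis, while by the Bessel asymptotics (\ref{bessas}) it is squeezed into a shrinking neighbourhood of $\La_r$ as $A\to+\infty$, again to the right of $\La_{r-1}$. Thus any excursion of $L_r$ across $\La_{r-1}$ into $\{B\le(r-1)\omega\}$ occupies a finite range of $A$ and, by connectivity, is capped above and below by constrictions sitting on lines $B=\omega l$ with $l\le r-2$. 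The goal is to show that such a capping constriction is negative, one-sided, and opens away from the origin, i.e. is queer, contradicting Conjecture \ref{queer}. The main obstacle is precisely this last identification: one must classify the admissible local pictures of an off-axis constriction (positive, one-sided negative, diagonal negative), exclude the positive and diagonal cases for $1\le l\le r-2$, and show that the topology of the excursion forces the remaining one-sided negative cap to open rightward. This is where the full force of the induction hypothesis, the sign constraint $\frac cb<0$ from Proposition \ref{cb0}, and the disjointness of interiors of the neighbouring areas $L_{r-1},L_{r-2},\dots$ have to be combined.
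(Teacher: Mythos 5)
Your proposal is not a proof but a plan whose decisive step is missing, and the missing step is exactly where the content of the paper's argument lies. You correctly observe that Conjecture \ref{queer} only forbids negative constrictions that are one-sided with the origin on the other side; an off-axis constriction that is positive, or negative with the two adjacent beads on opposite sides of the vertical line, is not queer, so the hypothesis of the proposition gives you nothing about it. You then say these cases ``must be excluded separately'' and call this ``the main obstacle,'' but you never exclude them --- the ingredients you invoke (Theorem \ref{locon}, Proposition \ref{cb0}, Proposition \ref{reality}, disjointness of interiors, induction on $r$) do not by themselves force an off-axis constriction to be one-sided negative and to open rightward. Two auxiliary claims are also unjustified: Conjecture \ref{garl} is not known to be equivalent to Conjecture \ref{right} (the paper proves only that \ref{right} implies \ref{garl}; a bead of $L_r$ could bulge across $\La_{r-1}$ without producing any constriction there, since $r-1\not\equiv r \pmod 2$ and Theorem \ref{thpol2} puts no constraint on that line), and for the same parity reason an excursion of $L_r$ into $\{B\le(r-1)\omega\}$ need not be ``capped above and below by constrictions'' at all unless it reaches $\La_{r-2}$.

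The paper resolves precisely this difficulty by a mechanism absent from your proposal: a deformation in $\omega$. Conjecture \ref{garl} is known for $\omega\ge1$ by \cite{4}; one decreases $\omega$ continuously and considers a first critical value $\omega_0$ at which an off-axis (``incorrect'') constriction of $L_r$ appears. Proposition \ref{cominf}, proved in the paper for this purpose, rules out incorrect constrictions arriving ``from infinity,'' so the new constriction must be born when $\partial L_r$, moving leftward, touches an axis $\La_l$, $0<l<r$, $l\equiv r \pmod 2$, for the first time; by Theorem \ref{thpol2} the touching point is a constriction (no polynomial solution can occur since $l<r$), and --- this is the key point --- at the moment of first touch the germ of $Int(L_r)$ necessarily lies entirely on the right of $\La_l$ while the origin lies on the left, so the newborn constriction is automatically queer, contradicting Conjecture \ref{queer}. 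The first-touch argument is what supplies the one-sidedness that your fixed-$\omega$ induction cannot produce; without it, or a substitute for it, your plan cannot be completed.
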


In the proof of Proposition \ref{impl} we use the following proposition.

\begin{proposition} \label{cominf} 
 Incorrect constrictions 
 cannot come "from infinity". That is, for every $\omega_0\in\rr_+$ and $r\in\nn$
there exists no sequence $\omega_n\to\omega_0$ for which there exists   a sequence of 
constrictions $(B_n,A_n)$ of the phase-lock areas $L_r=L_r(\omega_n)$ with 
 $B_n\neq\omega_nr$ and $A_n\to\infty$.
\end{proposition}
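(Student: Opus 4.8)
The plan is to argue by contradiction using a compactness/normal-families argument combined with the asymptotic description of the phase-lock areas near infinity provided by the Bessel asymptotics~(\ref{bessas}). Suppose such a bad sequence existed: $\omega_n\to\omega_0$, constrictions $(B_n,A_n)$ of $L_r(\omega_n)$ with $B_n\neq\omega_n r$ and $A_n\to\infty$. By Remark~\ref{ssylka-4} (which holds for every $\omega>0$), every constriction of $L_r$ has abscissa $B=\omega l$ with $l\in\zz$, $l\equiv r\ (\mathrm{mod}\ 2)$, $l\in[0,r]$; since $B_n\neq\omega_n r$, after passing to a subsequence I may assume $B_n=\omega_n l$ for one fixed $l\in\{0,1,\dots,r-1\}$ with $l\equiv r\ (\mathrm{mod}\ 2)$. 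Thus the $(B_n,A_n)$ live on the moving axis $\La_l(\omega_n)$ but belong to $\partial L_r$, not $\partial L_l$.

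\smallskip

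First I would exploit the boundary structure recorded in statement~2) of the introduction: $\partial L_r$ consists of the two analytic graphs $B=g_{r,\pm}(A)$. A constriction on $\La_l$ is a point where the two boundary branches meet on the vertical line $\{B=\omega_n l\}$, i.e. where $g_{r,-}(A_n)=g_{r,+}(A_n)=\omega_n l$. The Bessel asymptotics~(\ref{bessas}) give, for the \emph{fixed} target index $r$,
\begin{equation*}
g_{r,\pm}(A)=r\omega_n\pm J_r\!\left(-\tfrac{A}{\omega_n}\right)+O\!\left(\tfrac{\ln|A|}{A}\right),\qquad A\to\infty .
\end{equation*}
For the two branches to coincide at $A_n$ one needs $J_r(-A_n/\omega_n)\to 0$ at the rate dictated by the remainder; but even granting that, the common abscissa there is forced to be $r\omega_n+O(\ln A_n/A_n)$, which tends to $r\omega_0$. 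On the other hand the hypothesis says this abscissa equals $\omega_n l\to\omega_0 l$ with $l<r$ and $\omega_0>0$, so $r\omega_0=\omega_0 l$, a contradiction. This is the structural heart of the argument: a constriction of $L_r$ with $A_n\to\infty$ must asymptotically sit on $\La_r$, because that is where the two Bessel-governed boundary branches pinch together at large height.

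\smallskip

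To make the last step rigorous I would not merely subtract the asymptotic expansions but control the location of the meeting point uniformly in $n$. The remainder $O(\ln|A|/A)$ in~(\ref{bessas}) depends on $\omega_n$; since $\omega_n\to\omega_0>0$ stays in a compact subinterval of $\rr_+$, I would first establish (or invoke from~\cite{RK}) that the $O$-term is uniform for $\omega$ in such a compact set. Then, writing the coincidence condition $g_{r,+}(A_n)-g_{r,-}(A_n)=2J_r(-A_n/\omega_n)+O(\ln A_n/A_n)=0$ together with $g_{r,+}(A_n)+g_{r,-}(A_n)=2r\omega_n+O(\ln A_n/A_n)=2\omega_n l$, the second equation alone yields $2r\omega_n=2\omega_n l+O(\ln A_n/A_n)$, hence $r=l+O(\ln A_n/(A_n\omega_n))\to l$. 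Since $r,l$ are integers and $A_n\to\infty$ while $\omega_n\to\omega_0>0$, this forces $r=l$, contradicting $l\in[0,r-1]$.

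\smallskip

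The main obstacle I anticipate is exactly this uniformity of the Bessel asymptotics in the parameter $\omega$: the expansion~(\ref{bessas}) is stated for fixed $\omega$, and the whole argument collapses unless the error term is controlled uniformly as $\omega$ ranges over a neighbourhood of $\omega_0$ and $A\to\infty$. I would therefore devote the bulk of the proof to extracting such uniform estimates from the analysis in~\cite{RK}, most likely by tracking the $\omega$-dependence through the asymptotic expansion of the monodromy / trace $tr\,\wt M$ there, or equivalently by a normal-family argument showing that the rescaled boundary functions $h_\pm(A)=g_{r,\pm}(A)-r\omega$ converge, locally uniformly in $\omega$, to the corresponding Bessel profile. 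Once uniformity is in hand, the integrality of $l$ and $r$ closes the contradiction immediately.
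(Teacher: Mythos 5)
Your structural reduction is correct as far as it goes: by Remark \ref{ssylka-4} you may fix $l\equiv r\ (\mathrm{mod}\ 2)$, $l\le r-2$, with $B_n=\omega_n l$; at a constriction the two boundary graphs pinch, so $g_{r,-}(A_n)=g_{r,+}(A_n)=\omega_n l$; and adding the two expansions in (\ref{bessas}) so that the Bessel terms cancel, giving $\omega_n(r-l)=O(\ln A_n/A_n)$, would indeed finish the proof. But the step you yourself flag as the ``main obstacle'' is a genuine, unfilled gap, and it is not a technicality: (\ref{bessas}) is proved in \cite{RK} for each fixed $\omega$, and both the constant in the $O$-term and the height from which the expansion is valid may a priori degenerate as $\omega$ varies. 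Since the proposition is precisely a statement about a sequence $\omega_n\to\omega_0$, nothing in your argument rules out that the threshold of validity of the asymptotics at $\omega_n$ grows faster than $A_n$; in that regime your displayed equations say nothing about $(B_n,A_n)$. Promising to ``extract uniform estimates from \cite{RK}'' is a plan for a substantial separate piece of analysis (essentially redoing the asymptotics of loc.\ cit.\ with uniform dependence on the parameter), not a proof; as written, the argument is incomplete exactly at its load-bearing point.

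The paper's own proof is built to avoid any uniform asymptotics. It assumes $r\ge3$ (for $|r|\le2$ all constrictions are already known to lie on $\La_r$, Remark \ref{ssylka-4}), sets $r'=r-1$, and uses: (i) the ordinate $A(\mcp_{r'})$ of the higher simple intersection is bounded by the maximal root of an explicit monic polynomial whose coefficients depend on $\omega$ (\cite[section 3]{bt0}), hence bounded by some $\alpha$ uniformly for $\omega$ near $\omega_0$ --- an algebraic uniformity, far cheaper than uniform Bessel asymptotics; (ii) Theorem \ref{intcon}: the ray $R_\alpha=\{\omega r'\}\times[\alpha,+\infty)$ lies in $L_{r-1}$, hence is disjoint from $L_r$; (iii) any ``incorrect'' constriction of $L_r^+$ has abscissa at most $\omega(r-2)$, i.e.\ lies strictly to the left of $R_\alpha$, while by monotonicity of the rotation number in $B$ the set $L_r\cap\{A\ge\alpha\}$ lies to the right of $R_\alpha$ and is connected. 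So an incorrect constriction with ordinate $\ge\alpha$ would be separated by $R_\alpha$ from $L_r\cap\{A\ge\alpha\}$ and could not belong to $L_r$; hence all incorrect constrictions have ordinate below $\alpha$, uniformly in $\omega$ near $\omega_0$, which is exactly the proposition. If you wish to keep your route, the honest formulation is that it reduces Proposition \ref{cominf} to a uniform-in-$\omega$ version of the Klimenko--Romaskevich asymptotics, which is not available in the references used here and would need to be proved.
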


\begin{proof} Without loss of generality we consider that $r\geq3$, since we already know 
that for $r=\pm1,\pm2$ Conjecture \ref{garl}  holds: all the constrictions of the phase-lock area $L_r$ lie in $\Lambda_r$, see Remark \ref{ssylka-4}. 
By symmetry, it suffices to prove the statement of the proposition with $A_n\to+\infty$. 
 For every $r'\in\nn$ the higher simple intersection 
$\mcp_{r'}\in L_{r'}\cap \Lambda_{r'}$ has ordinate $A(\mcp_{r'})$ 
bounded by the maximal root of an explicit family of monic  polynomials depending on the 
parameter $\omega$, 
 see \cite[section 3]{bt0}. Thus, for any given $r'\in\nn$ and  $\omega_0\in\rr_+$ the value $A(\mcp_{r'})$ is 
  locally bounded from above as a function of $\omega$ on a neighborhood of the 
 point $\omega_0$, say $A(\mcp_{r'})<\alpha$ for $r'=r-1$. 
 On the other hand, the ray $R_{\alpha}=\omega r'\times [\alpha,+\infty)\subset Sr'$ lies 
 in $L_{r'}$, by Theorem \ref{intcon}. Therefore, for $r'=r-1$  the  ray $R_{\alpha}$ 
 is  disjoint from the phase-lock area $L_r$ and lies on the left from the intersection $L_r\cap\{ A\geq\alpha\}$ 
   All the possible 
  "incorrect" constrictions of the upper phase-lock area $L_r^+$, that is, the constrictions  
  with abscissas different from $\omega r$ have abscissas no greater than $\omega(r-2)$, 
  see \cite{4} and Remark \ref{ssylka-4}. Hence, they should lie on the 
  left from the line $\Lambda_{r'}=\{ B=\omega r'\}$ containing the ray $R_{\alpha}$. 
  Therefore, their ordinates should be 
  less than $\alpha$. Indeed, if the ordinate of some "incorrect" 
  constriction $X\in L_r^+$ were
   greater or equal to $\alpha$, then $X$ would be separated from the  intersection 
   $L_r\cap\{ A\geq\alpha\}$ by the ray $R_{\alpha}$. Hence, $X$ would not lie in $L_r$, 
   since $L_r\cap\{ A\geq\alpha\}$ is connected (monotonicity of the rotation 
   number function in $B$)
   -- a contradiction. Thus, the  "incorrect" constrictions 
   cannot escape to $+\infty$, as $\omega\to\omega_0$ along some subsequence. 
  This proves Proposition \ref{cominf}.
  \end{proof}

\begin{proof} {\bf of Proposition \ref{impl}.} Here we repeat the arguments from \cite{bg2} preceding conjecture 5.30. Recall that Conjecture \ref{garl} was proved in \cite{4}  for 
$\omega\geq1$: for every $r\in\zz$ 
all the constrictions of the phase-lock area $L_r$ lie in its axis $\La_r$. It was also shown in \cite{4} that 
for arbitrary $\omega>0$ for every $r\in\zz$ the abscissa of each constriction in $L_r$ equals $\omega l$ where $l\in\zz\cap[0,r]$ 
 and $l\equiv r(mod 2)$. Let us now deform $\omega$ from 1 to 0: the phase-lock area portrait will change 
 continuously in $\omega$. Fix an $r\in\nn$. Suppose that there exists  a certain "critical value" $\omega_0\in(0,1)$ 
 such that for every $\omega>\omega_0$ all the constrictions in $L_r$ lie in $\La_r$, and for 
 $\omega=\omega_0$ a new  constriction 
 $(B_0,A_0)$ of the phase-lock area $L_r$ is born and it is "incorrect": it does not lie in $\Lambda_r$. 
 This new constriction can be  born only on an axis $\La_l$ with $0<l<r$, 
 $l\equiv r(mod 2)$.
 Note that this is the only possible scenario of appearence of incorrect constrictions in $L_r$: they cannot 
 come "from infinity", by Proposition \ref{cominf}. Then the boundary of the phase-lock area $L_r$ 
 moves from the right to the left until it touches 
 the axis $\La_l$ for the first time. A point of  intersection $\partial L_r\cap \La_l$ cannot correspond to Heun equation (\ref{heun2}) having a polynomial solution, by Theorem \ref{thpol2} and since $l<r$. 
 Hence, it is a constriction, by the same theorem. 
 Therefore, for $\omega=\omega_0$ the axis $\La_l$ contains a constriction $(B_0,A_0)$ of the phase-lock area $L_r$ 
 and separates $Int(L_r)$ from the origin. See Fig. 9. Thus, the newly born constriction $(B_0,A_0)$ is queer. Therefore, if 
 Conjecture \ref{queer}, which forbids queer constrictions, is true, then we would obtain a contradiction and this would 
 prove Conjecture \ref{garl}. 
 \end{proof}
 
 \begin{figure}[ht]
  \begin{center}
   \epsfig{file=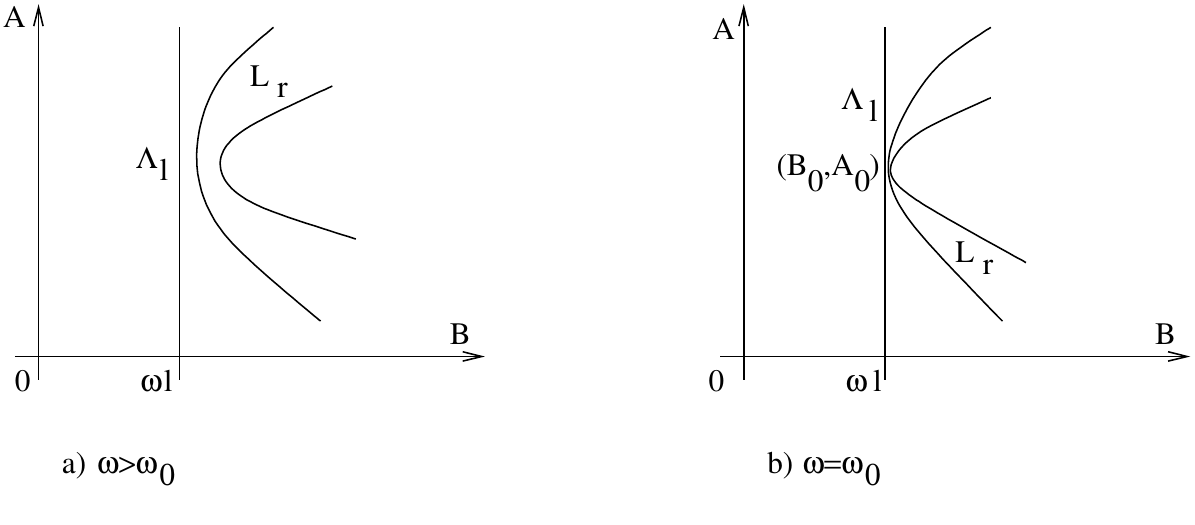}
    \caption{A conjecturally impossible scenario of appearense of an "incorrect" constriction of a phase-lock area $L_r$ in $\La_l$, 
    $0<l<r$, as $\omega$ decreases to the critical value $\omega_0$.}
  \end{center}
\end{figure} 
 
 \begin{proposition} 
Conjecture \ref{posit}  implies Conjecture \ref{cocon}.  Conjecture \ref{cocon} implies Conjecture \ref{queer}. 
Conjecture \ref{queer} implies Conjecture \ref{garl}.
\end{proposition}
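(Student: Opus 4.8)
The plan is to prove the three implications in turn, each of which reduces to a result already established above together with the symmetries of the phase-lock area portrait. None of the three steps requires new analytic input; the content is assembly.

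For the first implication, Conjecture \ref{posit} $\Rightarrow$ Conjecture \ref{cocon}, the plan is to feed Conjecture \ref{posit} directly into Proposition \ref{cb0}. Conjecture \ref{posit} asserts $\frac cb>0$ at every constriction in $\rr_{\geq0}\times\rr_+$, and Proposition \ref{cb0} then guarantees that each such constriction is positive. It remains to upgrade "positive for $B\geq0$, $A>0$" to "positive for all constrictions". First I would note that no constriction lies in the $B$-axis (by definition a separation point on the horizontal axis is a growth point, not a constriction), so every constriction has $A\neq0$. Then, invoking Convention \ref{conv1} together with the symmetry of the portrait with respect to the horizontal $B$-axis and the reflection $L_r\leftrightarrow L_{-r}$ about the vertical $A$-axis, one checks that the local alternative defining positivity (punctured neighborhood of $(B_0,A_0)$ in $\{B=B_0\}$ lying entirely inside, versus entirely outside, the phase-lock areas) is preserved under these reflections. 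Hence positivity of every constriction with $B\geq0$, $A>0$ propagates to all constrictions, which is Conjecture \ref{cocon}.

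The second implication, Conjecture \ref{cocon} $\Rightarrow$ Conjecture \ref{queer}, is essentially tautological and needs only unwinding the definition: a queer constriction is by definition a negative constriction carrying an additional separation property. If Conjecture \ref{cocon} holds, there are no negative constrictions at all, so a fortiori none that are queer, which is Conjecture \ref{queer}. The third implication, Conjecture \ref{queer} $\Rightarrow$ Conjecture \ref{garl}, has already been carried out above: it is precisely Proposition \ref{impl}, so I would simply cite it.

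The main point to watch is not an analytic obstacle but a bookkeeping one. Each conjecture is phrased over a slightly different parameter region (some over $\rr_{\geq0}\times\rr_+$, some over all of $\rr^2$), and the only care needed is to confirm that the symmetry arguments legitimately transport positivity and negativity across both axes, and that the quantity $\frac cb$ appearing in Conjecture \ref{posit} is exactly the ratio fed into Proposition \ref{cb0}. All the genuine work — the asymptotics (\ref{as01}), (\ref{astr}) underlying Proposition \ref{cb0}, and the "no constrictions from infinity" analysis (Proposition \ref{cominf}) underlying Proposition \ref{impl} — is already in place, so the present proposition is pure chaining of prior results.
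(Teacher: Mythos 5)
Your proposal is correct and follows the paper's own proof essentially verbatim: the first implication via Proposition \ref{cb0} plus the symmetry of the portrait with respect to both coordinate axes, the second by unwinding the definition of a queer constriction as a special negative one, and the third by citing Proposition \ref{impl}. Your extra bookkeeping (constrictions lie off the $B$-axis, the inside/outside alternative is preserved under the reflections) merely makes explicit what the paper compresses into one sentence.
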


\begin{proof} The first implication follows from Proposition \ref{cb0} and the symmetry of the phase-lock area portrait with respect to both coordinate axes. 
Conjecture \ref{cocon} obviously implies Conjecture \ref{queer}, which in its turn implies 
Conjecture \ref{garl}, by Proposition \ref{impl}. 
\end{proof}

\section{Upper triangular element and further properties of the transition matrix $Q$}

\begin{theorem} \label{triq} Let $\omega>0$, $l\geq0$, $\mu>0$, and let 
$$Q=\hat W_-^{-1}(z)W_+(z)M_N=\left(\begin{matrix} -a & b\\ -c & a\end{matrix}\right)$$ 
be the corresponding transition matrix from (\ref{connect}). For every fixed $\omega>0$ and  $l\in\zz$ for every $\mu>0$,  
set $(B,A)=(\omega l, 2\omega\mu)$, one has 
$$b=b(B,A)\in i\rr,$$
$$c=c(B,A)\in i\rr \text{ whenever } (B,A) \text{ is a constriction.}$$  
For every $\omega>0$ and $l\in\nn$ the set of those parameter values $\mu>0$, for which 
$c=c(B,A)\in i\rr$, 
is discrete. 
\end{theorem}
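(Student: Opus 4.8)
The plan is to reduce the whole statement to a single real-analytic identity along the axis and then to locate two parameter values at which the relevant factors do not vanish. Fix $l\in\nn$, set $B=\omega l$, and regard $a,b,c,c_0,c_1$ as functions of $\mu>0$ (equivalently of $A=2\omega\mu$). All of them are real-analytic on $(0,+\infty)$: the Stokes multipliers $c_0,c_1$ and the entries of $Q$ depend analytically on the real parameters for $A\neq0$, and moreover $c_0,c_1\in\rr$ by Theorem \ref{tre}, while $b\in i\rr$ by the first assertion of the present theorem.

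First I would specialize the second relation in (\ref{quad}) to $l\in\zz$, where $e^{\pm2\pi il}=1$; it collapses to $c\,c_0=c_1(b-a c_0)$. Taking real parts and using $\re b=0$ together with $c_0,c_1\in\rr$, I obtain $c_0\,\re c=-c_0c_1\,\re a$, i.e. $c_0\bigl(\re c+c_1\re a\bigr)=0$ identically in $\mu$. Once I know $c_0\not\equiv0$, real-analyticity forces $\re c=-c_1\re a$ on all of $(0,+\infty)$, so that the target set $\{\mu>0:\ c\in i\rr\}=\{\re c=0\}$ is exactly the zero set of the real-analytic function $-c_1\re a$. By analyticity on the connected interval $(0,+\infty)$, this zero set is discrete as soon as neither $c_1$ nor $\re a$ vanishes identically.

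To see $c_0\not\equiv0$ and $c_1\not\equiv0$ simultaneously, I would invoke Proposition \ref{33}: there is an interior point $(\omega l,A)\in Int(L_l)$, and at such a point Proposition \ref{ineql} with formula (\ref{trr}) gives $|2+c_0c_1|=|tr\wt M|>2$, which forces $c_0c_1\neq0$, hence $c_0\neq0$ and $c_1\neq0$ there. This already yields $c_0\not\equiv0$ (so the passage to $\re c=-c_1\re a$ is legitimate) and $c_1\not\equiv0$. For $\re a\not\equiv0$ I would use the existence of a simple intersection on $\La_l$, which holds precisely because $l\in\nn$ (Theorem \ref{intcon}); at such a point Theorems \ref{thpol2} and \ref{thpol} give $c_1=0$ and $c_0\neq0$, so the identity $c\,c_0=c_1(b-ac_0)$ yields $c=0$, and then the first relation of (\ref{quad}), $a^2=bc+1$, gives $a^2=1$, whence $a=\pm1$ and $\re a=\pm1\neq0$ at that parameter value. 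Therefore $\re a\not\equiv0$.

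Combining, $c_1\re a$ is real-analytic on the connected interval $(0,+\infty)$ and is not identically zero, so its zero set is discrete; since $\re c=-c_1\re a$, the set of $\mu>0$ with $c\in i\rr$ is exactly this discrete set. The two steps I expect to require the most care are the analytic-continuation argument passing from $c_0(\re c+c_1\re a)\equiv0$ to $\re c=-c_1\re a$ (which is what makes the imaginary-part conclusion global rather than pointwise), and the appeal to Theorem \ref{intcon} producing a genuine simple intersection for each $l\in\nn$; the latter is the single place where the hypothesis $l\in\nn$, rather than merely $l\geq0$, is indispensable, since for $l=0$ there are no simple intersections and the argument for $\re a\not\equiv0$ would have to be replaced.
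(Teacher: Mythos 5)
Your proposal, as written, establishes only the third assertion of Theorem \ref{triq}, and it does so by assuming the first. The sentence ``while $b\in i\rr$ by the first assertion of the present theorem'' is circular: the claim $b(B,A)\in i\rr$ for $l\in\zz$, $\mu>0$ is part of what you are asked to prove, and it is the analytic core of the theorem. In the paper it is obtained by explicitly computing $Q=\hat W_-^{-1}(1)\,W_+(1)$: one inverts $\hat W_-$ using the determinant identity $\det W_{\pm}(z)=z^{-l}e^{\mu(\frac1z-z)}$ (Lemma \ref{detl}, which rests on the Buchstaber--Tertychnyi Wronskian formula), and then uses the reality/imaginarity relations (\ref{conj}) for the entries of $W_+$ on $\rr_+$ (established inside the proof of Theorem \ref{tre}) to get $b=i\bigl(f_{12,+}^2(1)+f_{22,+}^2(1)\bigr)\in i\rr$. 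Nothing in your argument replaces this computation, and without it the identity $\re b=0$, on which your entire reduction rests, is unsupported. The second assertion --- $c\in i\rr$ at every constriction --- is not addressed at all; it does follow in one line from the first assertion together with Proposition \ref{reality} (the ratio of the triangular terms is real at constrictions), but you would need to say this.

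Granting the first assertion, your treatment of discreteness is correct and genuinely different from the paper's. You take real parts of $cc_0=c_1(b-ac_0)$ (the second relation of (\ref{quad}) specialized to $l\in\zz$), use $c_0,c_1\in\rr$ (Theorem \ref{tre}) and $\re b=0$ to get $c_0(\re c+c_1\re a)\equiv0$, remove the factor $c_0$ by analyticity, and conclude that $\{\mu:\re c=0\}$ is the zero set of $c_1\re a$; non-vanishing of $c_1$ (and of $c_0$) follows from an interior point of $L_l$ on the axis, since Propositions \ref{33} and \ref{ineql} give $|2+c_0c_1|>2$ there, and non-vanishing of $\re a$ follows from a simple intersection, where $c_1=0$, $c_0\neq0$ (Theorems \ref{thpol2}, \ref{thpol}) force $c=0$ and then $a^2=bc+1=1$, so $a=\pm1$. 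The paper instead argues by contradiction through the quadratic equation (\ref{qeq}) in $\gamma=-ic$ and the sign of its discriminant $\Delta_0$ near a simple intersection; your route avoids the discriminant entirely while using the same nondegeneracy inputs, and is arguably cleaner. So the discreteness part would stand once the first assertion is actually proved --- but as submitted, two of the three claims of the theorem are missing.
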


In the proof of Theorem \ref{triq} we use the following formula for the determinant of the fundamental solution matrix $W_+(z)$.

\begin{lemma} \label{detl} For every fundamental matrix $W(z)$ of solutions of system (\ref{tty}) one has 
\begin{equation} \det W(z)=\mathcal S z^{-l}e^{\mu(\frac1z-z)}, \ \mathcal S\equiv const.\label{detw}\end{equation}
The sectorial normalization matrices $H_{\pm}(z)$, see (\ref{wpm}), are unimodular: 
\begin{equation}\det H_{\pm}(z)\equiv1,\label{deth}\end{equation}
\begin{equation} \det W_{\pm}(z)\equiv z^{-l}e^{\mu(\frac1z-z)}.\label{detpm}\end{equation}
Here $W_{\pm}(z)$ are the canonical sectorial fundamental matrices from (\ref{wpm}). 
\end{lemma}
\begin{proof} The proof of formulas (\ref{detw}) and (\ref{deth}) is based on the following Buchstaber--Tertychnyi 
formula for the Wronskian of Heun equation \cite[proof of theorem 4]{bt1}: {\it for every two solutions $E_1$ and $E_2$ of equation (\ref{heun}) 
one has} 
\begin{equation}E'_1(z)E_2(z)-E_2'(z)E_1(z)=\mathcal W z^{-l-1}e^{\mu(z+\frac1z)}, \ \mathcal W\equiv const.
\label{wr}\end{equation}
Let us first prove (\ref{detw}). Fix any pair of vector solutions $f_j(z)=(u_j(z),v_j(z))$ of system (\ref{tty}), $j=1,2$. Let $E_j(z)=e^{\mu z}v_j(z)$ denote 
the corresponding solutions of equation (\ref{heun}). One has $v_j(z)=e^{-\mu z}E_j(z)$, 
$$u_j(z)=2i\omega zv_j'(z)=2i\omega z(e^{-\mu z}E_j(z))'=2i\omega ze^{-\mu z}(E_j'(z)-\mu E_j(z)).
$$
The determinant of the fundamental matrix $W(z)$ of the solutions $f_1(z)$ and $f_2(z)$ of system (\ref{tty}) equals 
$$\det W(z)=u_1(z)v_2(z)-u_2(z)v_1(z)=2i\omega z e^{-2\mu z}(E_1'(z)E_2(z)-E_2'(z)E_1(z))$$
$$=\mathcal S z^{-l}
e^{\mu(\frac1z-z)}, \ \mathcal S=2i\omega\mathcal W,$$
by the  latter formula and (\ref{wr}).  This proves (\ref{detw}). 

Let us now prove (\ref{deth}) and (\ref{detpm}). Let $W_{\pm}(z)=H_{\pm}(z)F(z)$ denote the canonical sectorial fundamental matrix from 
(\ref{wpm}). Its determinant equals $z^{-l}e^{\mu(\frac1z-z)}$ up to multiplicative constant, by (\ref{detw}), 
and the same statement holds for the matrix function $F(z)$:
$$F(z)=\diag(z^{-l}e^{\mu(\frac1z-z)}, 1), \ \det F(z)=z^{-l}e^{\mu(\frac1z-z)}.$$
Therefore, $\det H_{\pm}(z)\equiv const$. The latter constant equals one, since $H_{\pm}(0)=Id$. Formulas 
(\ref{deth}), (\ref{detpm}) and Lemma \ref{detl} are proved.
\end{proof}

\begin{proof} {\bf of Theorem \ref{triq}.} Let $W_+(z)$, $\hat W_-(z)$ be the fundamental matrices of solutions 
of system (\ref{tty}) from (\ref{connect}),
\begin{equation} W_+=\left(\begin{matrix}  f_{11,+} & f_{12,+}\\  f_{21,+} & f_{22,+}\end{matrix}\right),
\label{fwp}\end{equation}
see the notations from the proof of Theorem \ref{tre}, $l\in\zz$. One has $M_N=Id$, since $l\in\zz$, 
hence $W_+=W_+M_N$, 
\begin{equation}\hat W_-(z)=\bbi(W_+)(z)=-iz^{-l}e^{\mu(\frac1z-z)}\left(\begin{matrix}  -f_{21,+} & -f_{22,+} \\  f_{11,+} & f_{12,+}\end{matrix}\right)(z^{-1}),\label{w-1}\end{equation}
by definition, 
\begin{equation} \det W_+(z)=f_{11,+}(z)f_{22,+}(z)-f_{12,+}(z)f_{21,+}(z)=z^{-l}e^{\mu(\frac1z-z)},\label{dwp}
\end{equation}
by (\ref{detpm}). Therefore, 
$$D(z)=\det\left(\begin{matrix}  -f_{21,+} & -f_{22,+} \\  f_{11,+} & f_{12,+}\end{matrix}\right)(z^{-1})$$
$$=(f_{11,+}f_{22,+}-f_{12,+}f_{21,+})(z^{-1})=z^{l}e^{\mu(z-\frac1z)},$$
by (\ref{dwp}). Inverting the matrices in (\ref{w-1}) yields
$$\hat W_-^{-1}(z)=iz^le^{\mu(z-\frac1z)}\left(\begin{matrix} f_{12,+} & f_{22,+}\\ -f_{11,+} & -f_{21,+}\end{matrix}
\right)(z^{-1})D^{-1}(z)$$
\begin{equation}=i\left(\begin{matrix} f_{12,+} & f_{22,+}\\ -f_{11,+} & -f_{21,+}\end{matrix}
\right)(z^{-1}).\label{dw-}\end{equation}
Calculating the product $Q=\hat W_-^{-1}(z)W_+(z)$ for $z=1\in\Sigma_1$ by  formulas (\ref{fwp}) and (\ref{dw-}) 
 yields
 \begin{equation} Q=\left(\begin{matrix} -a & b\\ -c & a\end{matrix}\right), \ b=i(f_{12,+}^2(1)+f_{22,+}^2(1)),\label{qf2}
 \end{equation}
 $$c=i(f_{11,+}^2(1)+f_{21,+}^2(1)), \ a=-i(f_{11,+}(1)f_{12,+}(1)+f_{21,+}(1)f_{22,+}(1)).$$
One has $b\in i\rr$,  by (\ref{qf2}) and since $f_{12,+}(z)\in i\rr$,  $f_{22,+}(z)\in\rr$, whenever $z\in\rr_+$, see 
(\ref{conj}). The first statement of Theorem \ref{triq} is proved. Its second statement, on the lower triangular 
element follows from the first statement and reality of the ratio $\frac bc$ at each constriction, see Proposition 
\ref{reality}. Let us prove the third statement of Theorem \ref{triq}. In our case, when $l\in\nn$, the 
second relation in (\ref{quad})  takes the form 
\begin{equation} bc_1-cc_0=ac_0c_1.\label{qu}\end{equation}
Recall that $c_0$ and $c_1$ are the Stokes multipliers, see (\ref{stm}), and they are real 
for $l\in\zz$, by Theorem \ref{tre}. Writing relation (\ref{qu}) in the new parameters 
$$\beta=-ib, \ \gamma=-ic, \ \sigma=\frac{c_1}{c_0}$$
yields 
$$\beta\sigma-\gamma=-iac_1.$$
Substituting the latter formula to the first equation $a^2=bc+1$ in (\ref{quad}) yields 
\begin{equation}(\beta\sigma-\gamma)^2=-c_1^2a^2=c_1^2(\beta\gamma-1).\label{bga}\end{equation}
This yields the following quadratic equation in $\gamma$:
\begin{equation}\gamma^2-\beta(2\sigma+c_1^2)\gamma+\beta^2\sigma^2+c_1^2=0,\label{qeq}\end{equation}
which has a real solution if and only if its discriminant 
$$\Delta=\beta^2(2\sigma+c_1^2)^2-4(\beta^2\sigma^2+c_1^2)=c_1^2(\beta^2(4\sigma+c_1^2)-4)
$$
is non-negative, or equivalently, 
\begin{equation} \Delta_0=\beta^2(4\sigma+c_1^2)-4\geq0\label{dis0}\end{equation}
(whenever $c_1\neq0$). Note that the coefficients of the transition matrix $Q$ are analytic functions of the parameters on the complement to the hyperplane $\{ A=0\}$. Therefore, 
if, to the contrary, the coefficient $c$ were imaginary (or equivalently, $\gamma\in\rr$) 
on some interval in  $\Lambda_l^+=\{B=l\omega\}\cap\{ A>0\}$, then $\gamma$ 
would be  real  on the whole interval $\Lambda_l^+$, and inequality (\ref{dis0}) 
would hold on $\Lambda_l^+$, since $c_1\not\equiv0$ on $\Lambda_l^+$ 
(see Theorems \ref{thec} and \ref{thpol}). 
But this is impossible: $\Delta_0<0$ on a  neighborhood 
of a simple intersection point $(B,A)\in\Lambda_l^+$, since  $c_1=\sigma=0$, thus, 
$\Delta_0=-4$ at this point. Indeed, a simple intersection corresponds to 
double confluent Heun equation (\ref{heun2}) having a polynomial solution 
(Theorem \ref{thpol2}), and hence, $c_1=0$ there, by 
Theorem \ref{thpol}. The contradiction thus obtained 
implies that $c$ is purely  imaginary on a discrete subset in $\Lambda_l^+$. 
Theorem \ref{triq} is proved.
\end{proof}

Theorem \ref{triq} and the above discussion imply the following corollary. 
\begin{corollary} For every $\omega>0$, $l=\frac B{\omega}\in\nn$ and arbitrary 
$\mu=\frac A{2\omega}>0$ one has 
\begin{equation} \Delta_0=\beta^2(4\sigma+c_1^2)-4\leq0, \text{ where }  
\ \beta=-ib, \ \sigma=\frac{c_1}{c_0}.\label{dis1}\end{equation}
\end{corollary}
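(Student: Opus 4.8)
The plan is to read the corollary off directly from the apparatus already built in the proof of Theorem \ref{triq}, where the quadratic equation (\ref{qeq}) in the variable $\gamma=-ic$ was derived. First I would record that its coefficients are real: since $l\in\nn\subset\zz$, the Stokes multipliers $c_0,c_1$ are real by Theorem \ref{tre}, so that $\sigma=\frac{c_1}{c_0}$ and $c_1$ are real; and $b\in i\rr$ by the first statement of Theorem \ref{triq}, so $\beta=-ib$ is real as well. Thus (\ref{qeq}) is a monic quadratic with real coefficients, and its discriminant is exactly $\Delta=c_1^2\Delta_0$ with $\Delta_0$ as in (\ref{dis1}). The whole corollary is therefore equivalent to the assertion that this discriminant is nonpositive.

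The heart of the matter is that $\gamma=-ic$ is a root of this real quadratic, while by the third statement of Theorem \ref{triq} the value $c$ is purely imaginary — equivalently $\gamma$ is real — only for $\mu$ in a discrete subset of $\La_l^+$. Hence, off this discrete set, $\gamma$ is genuinely non-real. A real quadratic possessing a non-real root must have a conjugate pair of roots $\gamma,\bar\gamma$, so that $\Delta=(\gamma-\bar\gamma)^2=-4(\im\gamma)^2<0$ strictly. Since $c_1$ is real and vanishes only on a discrete set (it is not identically zero on $\La_l^+$ by Theorems \ref{thec} and \ref{thpol}), I would divide by $c_1^2>0$ and conclude that $\Delta_0=\Delta/c_1^2<0$ on an open dense subset of $\La_l^+$.

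It remains to pass from this dense strict inequality to the closed inequality $\Delta_0\le0$ claimed in (\ref{dis1}). Away from the constrictions (the zeros of $c_0$) the function $\Delta_0$ is analytic in $\mu$, so on each connected component of that punctured set, being negative on a dense subset forces $\Delta_0\le0$ throughout by continuity. The delicate point, which I expect to be the main obstacle, is the behaviour at the constrictions themselves, where $c_0=c_1=0$ and the defining expression $\sigma=\frac{c_1}{c_0}$ is a priori of the indeterminate form $\frac00$. Here I would invoke the asymptotics (\ref{as01}), which show that $\sigma$ tends to the finite real limit $\frac cb$ while $c_1^2\to0$; feeding this into $\Delta_0=\beta^2(4\sigma+c_1^2)-4$ and using $\beta^2=-b^2$ together with the relation $a^2=bc+1$ from (\ref{quad}) yields the finite limiting value $-4bc-4=-4a^2$. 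Thus $\Delta_0$ extends continuously across each constriction, and the bound $\Delta_0\le0$ already established on the dense complement passes to this limit, so that $-4a^2=\lim\Delta_0\le0$ automatically. Granting the continuous extension, this completes the proof of (\ref{dis1}) for arbitrary $\mu>0$.
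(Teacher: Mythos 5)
Your proof is correct and follows essentially the same route as the paper's: both rest on the fact that $\gamma=-ic$ is a root of the real quadratic (\ref{qeq}), so that non-reality of $\gamma$ — guaranteed off a discrete set by the last statement of Theorem \ref{triq} — forces the discriminant $c_1^2\Delta_0$ to be negative, after which one concludes by continuity/analyticity. The paper phrases this as a contradiction ($\Delta_0>0$ on an interval would make $c$ purely imaginary there) and passes silently over the constrictions, where $\sigma=\frac{c_1}{c_0}$ is indeterminate; your explicit continuous extension $\lim\Delta_0=-4bc-4=-4a^2$ via (\ref{as01}) is a careful (and welcome) refinement of the paper's two-line argument rather than a different method.
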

\begin{proof} If, to the contrary,  $\Delta_0$ were positive on some interval in $\Lambda_l^+$, then the coefficient $c$ 
would be imaginary there, by the above discussion, -- a contradiction to the last statement of Theorem \ref{triq}. 
\end{proof}

\section{Acknowledgements}

I am grateful to  V.M.Buchstaber for attracting 
my attention to problems on 
model of Josephson effect and helpful discussions. I am grateful to Yu.P.Bibilo for 
helpful discussions.

\end{document}